\def\grad{\nabla}
\def\cK{\mathcal{K}}
\def\cN{\mathcal{N}}
\def\cO{\mathcal{O}}
\def\cX{\mathcal{X}}
\def\mE{\mathbb{E}}
\def\eq{\[}
\def\en{\]}
\def\smskip{\smallskip}
\def\texitem#1{\par\smskip\noindent\hangindent 25pt
               \hbox to 25pt {\hss #1 ~}\ignorespaces}
\def\abs#1{\left|#1\right|}
\def\norm#1{\|#1\|}
\newcommand{\BEAS}{\begin{eqnarray*}}
\newcommand{\EEAS}{\end{eqnarray*}}
\newcommand{\BEA}{\begin{eqnarray}}
\newcommand{\EEA}{\end{eqnarray}}
\newcommand{\BEQ}{\begin{eqnarray}}
\newcommand{\EEQ}{\end{eqnarray}}
\newcommand{\BIT}{\begin{itemize}}
\newcommand{\EIT}{\end{itemize}}
\newcommand{\BNUM}{\begin{enumerate}}
\newcommand{\ENUM}{\end{enumerate}}
\newcommand{\BA}{\begin{array}}
\newcommand{\EA}{\end{array}}
\newcommand{\reals}{\mathbb{R}}
\newcommand{\integers}{\mathbb{Z}}
\newcommand{\argmin}{\mathop{\rm argmin}}
\newcommand{\dom}{\mathop{\bf dom}}
\newcommand{\intr}{\mathop{\bf int}}
\newif\ifpagenumbering
\newsavebox{\remarkbox}
\savebox{\remarkbox}{\noindent\bf Remark}
\newcommand{\proc}[1]{\textnormal{\scshape#1}}
\def\etak{\eta^{(k)}}
\def\xbp{x_{\ast}}
\def\xk{x^{(k)}}
\def\xkopt{x^{(k)}_{\ast}}
\def\lk{\lambda^{(k)}}
\def\tk{\tau^{(k)}}
\def\epsk{\epsilon^{(k)}}
\def\thetak{\theta^{(k)}}
\def\Nk{N^{(k)}}
\def\tkp{\tau^{(k+1)}}
\def\epskp{\epsilon^{(k+1)}}
\def\xik{\xi^{(k)}}
\def\pmk{p_\mu^{(k)}}
\def\fnk{f_\nu^{(k)}}
\def\Pk{P^{(k)}}
\def\sk{s^{(k)}}
\def\skopt{s^{(k)}_{\ast}}
\def\xik{\xi^{(k)}}
\renewcommand{\argmin}{\operatornamewithlimits{argmin}} 
\def\alg#1{\textsc{Algorithm~#1}}
\title{A First-order Augmented Lagrangian Method\\ for Compressed Sensing}
\author{N. S. Aybat\thanks{IEOR Department, Columbia University.
    Email: {\tt nsa2106@columbia.edu}} \and G. Iyengar\thanks{IEOR
    Department, Columbia University.
    Email: {\tt gi10@columbia.edu}}}
\begin{document}
\maketitle
\begin{abstract}
We propose a first-order augmented Lagrangian algorithm (FAL) for solving the basis pursuit problem. FAL computes a solution to this problem
by inexactly solving a sequence of $\ell_1$-regularized least squares sub-problems. These
sub-problems are solved using an infinite memory proximal
gradient algorithm wherein each update reduces to ``shrinkage'' or constrained
``shrinkage''. We show that FAL converges to an optimal solution of the
basis pursuit problem whenever the solution is unique, which is the case with very high probability for compressed sensing problems.
We construct a parameter sequence such that the corresponding FAL
iterates are $\epsilon$-feasible and $\epsilon$-optimal for all $\epsilon>0$ within $\cO\left(\log\left(\epsilon^{-1}\right)\right)$ FAL iterations.
Moreover, FAL requires at most $\cO(\epsilon^{-1})$ matrix-vector multiplications of the form $Ax$ or $A^Ty$ to compute an
$\epsilon$-feasible, $\epsilon$-optimal solution. We show that FAL can be easily extended to solve the basis pursuit denoising problem when there is a non-trivial level of noise on the measurements. We report the results of numerical experiments
comparing FAL with the state-of-the-art solvers for both noisy and noiseless compressed sensing problems.
A striking property of FAL that we observed in the numerical experiments
with randomly generated instances when there is no measurement noise was that FAL {\em always}
correctly identifies the support of the target signal without any
thresholding or post-processing, for moderately small error tolerance
values.
\end{abstract}
\section{Introduction}
In this paper we propose a new first-order augmented Lagrangian algorithm
to solve the {\em basis pursuit} problem
\begin{align}
\min_{x\in\reals^n}\|x\|_1 \hbox{ subject to } Ax=b, \label{ch2_eq:l1_minimization}
\end{align}
where $\ell_1$-norm $\norm{x}_1 := \sum_{i=1}^n \abs{x_i}$, $x_i$ denotes
the $i$-th component of $x\in\reals^n$, $b\in \reals^m$,
$A \in \reals^{m\times n}$, with $m \ll n$, and  $\rank(A) = m$, i.e. $A$
has full row rank.
The basis pursuit problem
appears in the context of
{\em compressed
  sensing}~(CS)~\cite{Can06_1J, Can06_2J, Can06_3J,Don06_1J} where the goal is to recover a sparse signal $x_*$ from a small set of
linear measurements or transform values $b = Ax_*$.
Candes, Romberg and Tao~\cite{Can06_1J, Can06_2J, Can06_3J} and
Donoho~\cite{Don06_1J} have shown that
when the target signal $x_*$  is  $s$-sparse, i.e. only $s$ of the $n$
components are non-zero, and the measurement
matrix $A\in\Re^{m\times n}$ satisfies some regularity conditions, the
sparse signal $x_*$ can be recovered by solving the basis pursuit
problem~\eqref{ch2_eq:l1_minimization}
with high probability provided that the number of measurements $m = \cO(s\log(n))$.
The basis pursuit problem is a linear program~(LP). Therefore, computing the sparsest solution to the set of linear
equations $Ax = b$, which is an NP-hard
problem for general $A$, can be done efficiently, in theory, by solving an LP.

However, in typical CS applications the signal dimension $n$ is large, e.g.
$n \approx 10^6$, and  the LP~\eqref{ch2_eq:l1_minimization} is often
ill-conditioned. Consequently, general purpose simplex-based LP
solvers  are unable to solve the LP. Moreover, the constraint matrix $A$ is typically dense. Therefore, general purpose
interior point methods that require factorization of $A^TA$
are not practical for solving  LPs
arising in CS applications.

%
%

On the other hand, in CS applications the $A$, although dense, still has a lot of
structure. In many applications, $A$ is a partial transform matrix, e.g.
partial discrete cosine transform~(DCT), a partial wavelet, or a partial
pseudo-polar Fourier  matrix. Therefore, the  matrix-vector product $Ax$
and $A^Ty$ can be computed in $\cO(n\log(n))$ time using either the
Fast Fourier Transform~(FFT) or forward and backward Wavelet transforms.
This fact has been recently exploited by a number of \emph{first-order}
algorithms. In this paper, we propose a new first-order augmented Lagrangian
algorithm for the basis pursuit problem. Since the basic steps in a first-order algorithm
are the matrix-vector multiplications in the form of $ Ax $ and $ A^T y $, we will report complexity in terms of the
number of such matrix-vector multiplications required to solve the problem.

\subsection{Previous work on first-order algorithms for compressed sensing}
When the measurement data $b$ contains a non-trivial level of noise, one can solve
\begin{align}
\min_{x\in\reals^n}\bar{\lambda}\|x\|_1+\|Ax-b\|_2^2, \label{ch2_eq:l1_minimization_normsq}
\end{align}
for an appropriately chosen $\bar{\lambda}>0$ depending on the noise level to recover the sparse target signal $x_*$ with some error proportional to the noise on $b$~\cite{Can05_1J}. On the other hand, when there is no noise on the measurements, $b$, or when the noise level is low, one can solve \eqref{ch2_eq:l1_minimization_normsq} for a fixed small $\bar{\lambda}>0$, which can be viewed as a penalty approximation to \eqref{ch2_eq:l1_minimization}.

In~\cite{Wri07_1J} Figueiredo, Nowak and Wright proposed the GPSR algorithm
that uses gradient projection method with
Barzilai-Borwein steps to solve \eqref{ch2_eq:l1_minimization_normsq}.
Hale, Yin and Zhang~\cite{Yin07_1R,Yin08_1J} proposed to solve
\eqref{ch2_eq:l1_minimization_normsq} via the fixed point
continuation~(FPC) algorithm that embeds the soft-thresholding (IST) algorithm~\cite{Dau04_2J} in a continuation scheme on $\lambda$, i.e. FPC begins with $\lambda>\bar{\lambda}$ and gradually decreases it to $\bar{\lambda}$, to recover the sparse solution of \eqref{ch2_eq:l1_minimization_normsq}.
Wen, Yin, Goldfarb and Zhang~\cite{Wen09_1R} improved the performance of
FPC by adding an active set~(AS) step. Please note that GPSR, FPC and FPC-AS only converge to the optimal solution of
\eqref{ch2_eq:l1_minimization_normsq}, \emph{not} to the optimal solution of \eqref{ch2_eq:l1_minimization}. Hence, when there is no noise or when it is low, the solutions produced by these algorithms are only good approximations to $x_*$.

Yin, Osher, Goldfarb and Darbon~\cite{Yin08_2J}
solve~\eqref{ch2_eq:l1_minimization} using a Bregman iterative
regularization scheme that involves a sequence of problems of the form
$
\min_{x\in\reals^n}\bar{\lambda}\|x\|_1+\frac{1}{2}\|Ax-b^{(k)}\|_2^2, \label{ch2_eq:bregman}
$
where $b^{(k)}$ are obtained by suitably updating the measurement vector
$b$, and each sub-problem is solved using FPC. For the basis pursuit problem, the so-called Bregman iterative
regularization procedure is nothing but the classic augmented Lagrangian method.
The algorithm YALL1 developed by Yang and Zhang~\cite{Yang09}, which is an alternating direction algorithm, is able to
solve the basis pursuit problem
\eqref{ch2_eq:l1_minimization},  the penalty formulation
\eqref{ch2_eq:l1_minimization_normsq}, and the basis pursuit denoising problem
\begin{equation}
\label{ch2_eq:relaxed_l1}
\begin{array}{rl}
    \mbox{min} &  \|x\|_1,\\
    \mbox{s.t.} & \norm{Ax-b}_2 \leq \delta.
  \end{array}
\end{equation}
Bregman iteration based  methods~\cite{Yin08_2J} and YALL1~\cite{Yang09}
provably converge to the
optimal solution of the basis pursuit
problem~(\ref{ch2_eq:l1_minimization}); however, their convergence rates
are
unknown.

Other algorithms for $\ell_1$-regularized least squares problem \eqref{ch2_eq:l1_minimization_normsq} include an iterative interior-point solver~\cite{Boy07_1J},
and an accelerated projected gradient method~\cite{Dau08_1J}.
Van den Berg and Friedlander~\cite{Ber08_1J} proposed SPGL1 to solve the penalty formulation~\eqref{ch2_eq:relaxed_l1} by solving a sequence of LASSO sub-problems $\Psi(t) = \{\|Ax-b\|_2^2: \norm{x}_1
\leq t\}$ 
where parameter $t$ is updated by a
Newton step. This algorithm
provably converges to the optimal
solution of~\eqref{ch2_eq:relaxed_l1};
however, the convergence rate is again unknown.

Aybat and Iyengar~\cite{AybatI09:SPA} have proposed a first-order smoothed
penalty algorithm~(SPA) to solve the basis pursuit problem.
SPA iterates $\{x^{(k)}\}_{k\in\integers_+}$ are computed by inexactly solving a sequence of
smoothed penalty problems of the form
\eq
\min_{\norm{x}_2\leq \etak}\big\{\lk \pmk(x) + \fnk (x)\big\},
\en
where $\pmk(x)$ is a smooth approximation of $\norm{x}_1$, $\fnk(x)$
is a smooth approximation of $\norm{Ax-b}_2$ and $\etak$ is a suitably
chosen bound on the $\ell_2$-norm of an
optimal solution of the $k$-th sub-problem.
SPA calls Nesterov's optimal algorithm for simple
sets~\cite{Nesterov04,Nesterov05} to solve the sub-problems.
SPA iterates provably converge to an optimal solution $x_*$ of the
basis pursuit problem
whenever it is unique.
Moreover, for all small
enough $\epsilon$,  SPA requires $\cO(\sqrt{n}\epsilon^{-\frac{3}{2}})$
matrix-vector multiplies to compute an
$\epsilon$-feasible, i.e. $\norm{Ax^{(k)} - b}_2 \leq \epsilon$, and
$\epsilon$-optimal, $\left|~\norm{x^{(k)}}_1 - \norm{x_*}_1\right| \leq
\epsilon$ iterate.


Becker, Bobin and Cand\`{e}s~\cite{Can09_4J} have proposed NESTA
for solving the formulation~\eqref{ch2_eq:relaxed_l1} (NESTA can also be used to solve the basis pursuit problem~\eqref{ch2_eq:l1_minimization} by setting $\delta$ to $0$).
NESTA calls Nesterov's optimal gradient method for non-smooth convex
functions~\cite{Nesterov05} to solve the sub-problems.
When the matrix $A$ is orthogonal, i.e. $AA^T = I$, NESTA requires $
\cO(\sqrt{n}\epsilon^{-1})$  matrix-vector multiplications to compute a
feasible $\epsilon$-optimal iterate to \eqref{ch2_eq:relaxed_l1}.
When the matrix $A$ is a partial transform matrix, i.e. $Ax$ and $A^Ty$ is
$\cO(n\log(n))$, but $A$ is not orthogonal, NESTA, in general, needs to
compute $(A^TA + \mu I)^{-1}$, and therefore, its $\cO(n^3)$ per iteration complexity
is quite  prohibitive for practical applications. Moreover, the sequence of NESTA iterates does not converge an optimal solution of \eqref{ch2_eq:relaxed_l1} but to a solution of a smooth approximation of \eqref{ch2_eq:relaxed_l1}.



\subsection{New results}
In this paper  we propose a first-order augmented Lagrangian~(FAL)
algorithm that solves the basis pursuit problem
by inexactly solving a sequence of optimization problems of the form
\begin{equation}
  \label{ch2_eq:augmented_lagrangian_subproblem}
  \min_{x\in\reals^n:~\norm{x}_1\leq\etak}\Big\{\lambda^{(k)}
  \norm{x}_1-\lambda^{(k)} (\theta^{(k)})^T(Ax-b) +
  \frac{1}{2} \norm{Ax-b}_2^2\Big\},
\end{equation}
for an appropriately chosen sequence $\{(\lambda^{(k)},\theta^{(k)},\etak)\}_{k \in \integers_+}$.
Each of these sub-problems are solved using a variant (see
Figure~\ref{ch1_alg:pga}) of the infinite-memory proximal
gradient algorithm in~\cite{Tseng08} (see, also FISTA~\cite{Beck09_1J} and Nesterov infinite-memory algorithm~\cite{Nesterov05}). Each
update in this proximal gradient algorithm involves computing the gradient $A^T(Ax-b)$
of the quadratic term $\frac{1}{2}\norm{Ax-b}_2^2$ and computing two
constrained ``shrinkage'' (see Equation~\ref{ch2_eq:constrained_shrinkage_problem}), which require $\cO(n\log(n))$ work.
Hence, the complexity of each  update is dominated by computing the gradient $A^T(Ax-b)$ or equivalently two matrix-vector multiplies.

In Theorem~\ref{ch2_thm:limit-point} in
Section~\ref{ch2_sec:theory} we prove that every limit point
of the FAL iterate sequence is an optimal solution of
\eqref{ch2_eq:l1_minimization}. Thus, the FAL iterates converge to the
optimal solution when the solution is unique. In
Theorem~\ref{ch2_thm:epsilon_convergence} we show that 
for all $\epsilon>0$, the
FAL iterates $x^{(k)}$ are $\epsilon$-feasible, i.e. $\norm{Ax^{(k)} -
  b}_2 \leq \epsilon$, and $\epsilon$-optimal, $\left|~\norm{x^{(k)}}_1 -
  \norm{x_*}_1\right| \leq \epsilon$, for $k \geq
\cO(\log(\epsilon^{-1}))$. Moreover, FAL requires at most $
\cO(n\epsilon^{-1})$ matrix-vector  multiplications to compute an $\epsilon
$-feasible, $\epsilon $-optimal solution to \eqref{ch2_eq:l1_minimization}. Thus, the overall complexity of FAL
computing an $\epsilon $-feasible and $\epsilon $-optimal iterate is $\cO(n^2\log(n)\epsilon^{-1})$ in the CS context. And in Section~\ref{ch1_sec:extensions}, we briefly discuss how to extend FAL to solve the noisy recovery problem $\min_{x\in\reals^n}\{\norm{x}_1:~\norm{Ax-b}_2\leq\delta\}$.

In Section~\ref{ch2_sec:computations} we report the results of our
experiments with FAL. We tested FAL on randomly
generated problems both with and without measurement noise and also on
known hard instances of the CS problems.
We compared the performance of FAL with SPA~\cite{AybatI09:SPA},
NESTA~\cite{Can09_4J}, FPC~\cite{Yin08_1J}, FPC-AS~\cite{Wen09_1R}, YALL1~\cite{Yang09} and SPGL1~\cite{Ber08_1J}.
On  randomly generated problem instances FAL is at least two times faster
than all the other solvers.
On known hard CS instances the run times of FAL were of the same order of
magnitude as the best solver; but FAL was able
to identify significantly sparser solutions.
We also observed that for all randomly generated instances with no measurement noise  FAL {\em
  always} correctly identified the support of the target signal $x_*$, without any
additional heuristic thresholding, when the error tolerance was set to moderate values.
Once the support is known, the signal $\xbp$
can often  be very accurately computed by solving a set of linear
equations. Moreover, although the bound in Theorem~\ref{ch2_thm:epsilon_convergence} implies
that FAL requires $\cO\left(\frac{1}{\epsilon}\right)$ matrix-vector
multiplies to compute an $\epsilon$-feasible, $\epsilon$-optimal solution,
in practice we observed that FAL required only
$\cO\left(\log\left(\frac{1}{\epsilon}\right)\right)$
matrix-vector multiplies to compute an $\epsilon$-feasible,
$\epsilon$-optimal solution. 

FAL is superior to SPA~\cite{AybatI09:SPA} both in terms of the
theoretical guarantees as well as practical
performance on the basis pursuit problem. However, FAL explicitly uses the
structure of the $ \ell_1 $-norm and is, therefore, restricted to basis
pursuit and related problems. On the other hand, SPA can be extended
easily to solve the following much larger class non-smooth convex
optimization problems:
\[
\begin{array}{rl}
\min & \max_{u \in U} \{\phi(x,u)\},\\
\mbox{subject to} & \norm{Ax-b}_\gamma\leq\delta,
\end{array}
\]
where $U$ is a compact convex set and  $\phi:\reals^n\times
U\rightarrow\reals$ is a bi-affine
function~\cite{Nesterov05}, and $ \gamma
\in \{1,2,\infty\} $. This class includes as special cases, basis pursuit,
matrix games with side constraints, group
LASSO, and problems of the form $ \min\{ \sum_{k = 1}^p \norm{B_k
x}_1: Ax = b\} $ that appears in the context of reconstructing a piecewise flat sparse image.


\section{Preliminaries}
\label{ch2_sec:solver}
In this section we state and briefly discuss the details of a particular
variant of Tseng's Algorithm~3 in
\cite{Tseng08} that we use in FAL. Algorithm~3~\cite{Tseng08} computes $
\epsilon $-optimal solutions for the
optimization problem
\begin{align}
\label{ch1_eq:tseng_problem}
  \min_{x\in F} p(x)+f(x),
\end{align}
where $f$, $p$ and $F$ satisfy the following conditions.
\begin{equation}
  \label{eq:apg}
  \begin{array}{cl}
    \text{(i)} & p:\reals^n\rightarrow\reals \text{ proper,
      lower-semicontinuous~(lsc) and convex function, and $\dom p$ closed},\\
    \text{(ii)} & f:\reals^n\rightarrow\reals \text{ proper, lsc, convex
      function, differentiable on
      an open set containing $\dom p$},\\
    \text{(iii)} & \text{$\grad f$
      is Lipschitz continuous on $\dom p$ with constant $L$},\\
    \text{(iv)} &  F \cap \argmin_{x
  \in \reals^n}\{p(x)+f(x)\} \neq \emptyset.
  \end{array}
\end{equation}
We refer to a function $h:\reals^n\rightarrow\reals$ as a {\em prox}
function if $h$ is differentiable and strongly
convex function with convexity parameter $c > 0$,
i.e. $h(y)\geq h(x)+\grad h(x)^T(y-x)+\frac{c}{2}\norm{y-x}_2^2$ for
all $x,y\in\dom h$.
%
\begin{figure}[!htb]
    \rule[0in]{6.5in}{1pt}\\
    \textsc{Algorithm APG}$(p, f, L, F, x^{(0)}, h, \textsc{APGstop})$\\
    \rule[0.125in]{6.5in}{0.1mm}
    \vspace{-0.3in}
    \begin{algorithmic}[1]
      \STATE $u^{(0)} \gets x^{(0)}$, $w^{(0)}\gets \argmin_{x\in\dom
        p}h(x)$, $\vartheta^{(0)}\gets 1$, $\ell\gets 0$
    \WHILE{($\textsc{APGstop}$ is \FALSE)}
    \STATE $v^{(\ell)} \gets (1-\vartheta^{(\ell)}) u^{(\ell)} +
    \vartheta^{(\ell)} w^{(\ell)}$
    \STATE $w^{(\ell+1)} \gets \argmin\left\{\sum_{i=0}^\ell
      \frac{1}{\vartheta^{(i)}}\left(p(z)+\nabla f(v^{(i)})^T z\right)
      + \frac{L}{c}~h(z): z\in F\right\}$ \label{ch1_algeq:tseng_z}
    \STATE $\hat{u}^{(\ell+1)}\gets (1-\vartheta^{(\ell)})u^{(\ell)}+\vartheta^{(\ell)} w^{(\ell+1)}$
    \STATE $H^{(\ell)}(x):=p(x)+\nabla f(v^{(\ell)})^T x +
    \frac{L}{2}\norm{x-v^{(\ell)}}_2^2$ \label{ch1_algeq:H}
    \STATE $u^{(\ell+1)} \gets \argmin\{H^{(\ell)}(x):~x\in F\}$ \label{ch1_eq:modified_x}
    \STATE $\vartheta^{(\ell+1)}\gets\frac{\sqrt{(\vartheta^{(\ell)})^4-4(\vartheta^{(\ell)})^2}-(\vartheta^{(\ell)})^2}{2}$
    \STATE $\ell \gets \ell + 1$
    \ENDWHILE
    \RETURN $ u^{(\ell)}$ \textbf{or} $ v^{(\ell)}$ depending on $\textsc{APGstop}$
    \end{algorithmic}
    \rule[0.25in]{6.5in}{0.1mm}
    \vspace{-0.5in}
    \caption{Accelerated Proximal Gradient Algorithm}\label{ch1_alg:pga}
\end{figure}

Our variant of Algorithm~3 in~\cite{Tseng08} is displayed
in~Figure~\ref{ch1_alg:pga}. \textsc{Algorithm APG} takes as
input the functions $f$ and  $p$, a prox function $h$, the set $F$, an initial iterate $x^{(0)}$ and a stopping criterion \textsc{APGstop}.
\begin{lemma}
\label{ch1_lem:tseng_corollary}
Suppose $p$, $f$ and $F$ satisfy (\ref{eq:apg}). Let $h$ be a
prox function on an open set containing $\dom p$ and $\min_{x\in \dom
  p}h(x)=0$. Fix $\epsilon>0$ and let
$\{u^{(\ell)},v^{(\ell)},w^{(\ell)}\}_{\ell\in\integers}$ be the
sequence generated by \textsc{Algorithm~APG} displayed in
Figure~\ref{ch1_alg:pga}. Then $p(u^{(\ell+1)})+f(u^{(\ell+1)})\leq
\min_{x\in\reals^n}\{p(x)+f(x)\}+\epsilon$ for all
$\ell\geq\sqrt{\frac{4L}{c\epsilon}~h(x_*)}-1$,
where $x_* \in \argmin_{x \in \reals^n}\{p(x) + f(x)\}$.
\end{lemma}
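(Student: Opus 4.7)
The plan is to mirror the classical estimate--sequence analysis of accelerated proximal gradient methods (Tseng~\cite{Tseng08}, Nesterov~\cite{Nesterov05}, FISTA~\cite{Beck09_1J}) adapted to the variant in Figure~\ref{ch1_alg:pga}. Write $F(x) := p(x) + f(x)$ and $F_* := F(x_*)$; by hypotheses (i) and (iv) we have $x_* \in F \cap \dom p$ and $F_* = \min_{x \in \reals^n} F(x)$. The target estimate is the quadratic rate
\begin{equation*}
F(u^{(\ell+1)}) - F_* \;\le\; \tfrac{4L}{c(\ell+2)^2}\, h(x_*),
\end{equation*}
which, set at most $\epsilon$, yields the claimed iteration count up to a cosmetic constant.

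First I would combine two ingredients. Since $\grad f$ is $L$-Lipschitz on an open set containing $\dom p$ and both $u^{(\ell+1)}$ and $v^{(\ell)}$ lie in $\dom p$, the descent lemma at $v^{(\ell)}$ gives $F(u^{(\ell+1)}) \le H^{(\ell)}(u^{(\ell+1)})$; because $u^{(\ell+1)}$ minimizes $H^{(\ell)}$ over $F$ by line~\ref{ch1_eq:modified_x}, and the convex combination $\hat u^{(\ell+1)} := (1-\vartheta^{(\ell)}) u^{(\ell)} + \vartheta^{(\ell)} w^{(\ell+1)}$ lies in $F$ by convexity, this yields $H^{(\ell)}(u^{(\ell+1)}) \le H^{(\ell)}(\hat u^{(\ell+1)})$. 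Expanding $H^{(\ell)}$ at $\hat u^{(\ell+1)}$ and using convexity of $p$ and $f$ produces a one-step recursion of the form
\begin{equation*}
F(u^{(\ell+1)}) - F_* \;\le\; (1-\vartheta^{(\ell)})\bigl(F(u^{(\ell)}) - F_*\bigr) + \vartheta^{(\ell)} R^{(\ell)},
\end{equation*}
where $R^{(\ell)}$ is a linearization residual to be controlled. The second key observation is that $w^{(\ell+1)}$ minimizes the cumulative surrogate $\psi_\ell(z) := \sum_{i=0}^\ell \tfrac{1}{\vartheta^{(i)}}\bigl(p(z) + \grad f(v^{(i)})^T z\bigr) + \tfrac{L}{c}\, h(z)$ over $F$ (line~\ref{ch1_algeq:tseng_z}), and $\psi_\ell$ is strongly convex with modulus $L$ because $h$ is strongly convex with modulus $c$; comparing $\psi_\ell(w^{(\ell+1)})$ with $\psi_\ell(x_*)$ bounds $R^{(\ell)}$ in terms of $h(x_*) - h(w^{(\ell+1)})$ plus partial sums that telescope against the analogous expression at step $\ell-1$.

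Dividing the recursion by $(\vartheta^{(\ell)})^2$ and invoking the identity $\tfrac{1-\vartheta^{(\ell+1)}}{(\vartheta^{(\ell+1)})^2} = \tfrac{1}{(\vartheta^{(\ell)})^2}$, which is precisely what the $\vartheta$-update in the algorithm enforces, produces a telescoping inequality that collapses, using $\vartheta^{(0)} = 1$ and $h(w^{(0)}) = \min_{x \in \dom p} h(x) = 0$, to
\begin{equation*}
\tfrac{1}{(\vartheta^{(\ell)})^2}\bigl(F(u^{(\ell+1)}) - F_*\bigr) \;\le\; \tfrac{L}{c}\, h(x_*).
\end{equation*}
A routine induction on the $\vartheta$-recursion then shows $\vartheta^{(\ell)} \le \tfrac{2}{\ell+2}$, and substituting this back gives the target quadratic rate. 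The main obstacle, and the step that truly requires care, is choosing the potential $\psi_\ell$ with weights $1/\vartheta^{(i)}$ so that its strong convexity exactly absorbs $R^{(\ell)}$ and the telescoping matches the $\vartheta$-recursion; once this bookkeeping is in place the remainder is routine algebra.
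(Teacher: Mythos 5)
Your proposal is correct in substance, but it takes a much longer, self-contained route than the paper. The paper's proof is two lines: it invokes Corollary~3 of \cite{Tseng08} directly, observing that the only thing that needs to be checked for the variant in Figure~\ref{ch1_alg:pga} (where $u^{(\ell+1)}$ is recomputed as $\argmin_{x\in F}H^{(\ell)}(x)$ rather than set to $\hat u^{(\ell+1)}$) is the inequality $H^{(\ell)}(u^{(\ell+1)})\le H^{(\ell)}(\hat u^{(\ell+1)})$, which holds by induction since $\hat u^{(\ell+1)}$ is a convex combination of points in $F$. That one observation is exactly the second sentence of your first paragraph, so you have isolated the essential new content; everything else in your sketch (the cumulative surrogate $\psi_\ell$ weighted by $1/\vartheta^{(i)}$, its $L$-strong convexity inherited from $\tfrac{L}{c}h$, the telescoping via $\tfrac{1-\vartheta^{(\ell+1)}}{(\vartheta^{(\ell+1)})^2}=\tfrac{1}{(\vartheta^{(\ell)})^2}$, and the bound $\vartheta^{(\ell)}\le\tfrac{2}{\ell+2}$) is a re-derivation of Tseng's Corollary~3 itself. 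What your approach buys is self-containedness and transparency about where each hypothesis in \eqref{eq:apg} is used (in particular, that assumption (iv) lets $x_*$ be taken inside $F$, so the bound is against the unconstrained minimum); what the paper's approach buys is brevity and a clean separation between the cited result and the single modification that must be justified. Two cosmetic points: the descent lemma gives $F(u^{(\ell+1)})\le H^{(\ell)}(u^{(\ell+1)})+f(v^{(\ell)})-\nabla f(v^{(\ell)})^Tv^{(\ell)}$, not $F(u^{(\ell+1)})\le H^{(\ell)}(u^{(\ell+1)})$ as written; the affine offset is independent of $x$ and cancels in the comparison with $\hat u^{(\ell+1)}$, so nothing breaks, but it should be carried along. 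Also, the identity you invoke for the $\vartheta$-update requires reading the recursion as $\vartheta^{(\ell+1)}=\tfrac{\sqrt{(\vartheta^{(\ell)})^4+4(\vartheta^{(\ell)})^2}-(\vartheta^{(\ell)})^2}{2}$ (the displayed minus sign under the radical is a typo, as the expression would otherwise be imaginary); you implicitly and correctly use the intended update.
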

\begin{proof}
Corollary~3 in \cite{Tseng08} implies this result provided that $H(u^{(\ell+1)}) \leq
H(\hat{u}^{(\ell+1)})$ holds for all $ \ell \geq 0 $. Using induction, it is
easy to show that this is true when the update  rule
\eqref{ch1_eq:modified_x} is used for all $\ell\geq 1$.
\end{proof}

\section{Convergence Properties of FAL}
\label{ch2_sec:theory}

In this section, we describe FAL and prove the main convergence
results for the algorithm.  The outline of FAL is given in
Figure~\ref{ch2_alg:fal}.
\textsc{Algorithm FAL} takes as inputs a sequence of
$\{(\lk,\epsk,\tk)\}_{k\in \integers_+}$, a starting point $x^{(0)}$
and a bound $\eta$ on the $\ell_1$-norm of an optimal solution
$x_*$ of the basis pursuit problem. 
One such bound $\eta$ can be computed as
follows. Let $\tilde{x} = \argmin\{\norm{x}_2: Ax =
b\}=A^T(AA^T)^{-1}b$. Clearly, $\norm{x_*}_1 \leq \eta := \norm{\tilde{x}}_1$.
We will next describe each of the steps in this
outline.

An augmented Lagrangian function for the basis pursuit
problem~\eqref{ch2_eq:l1_minimization} can be written as
\eq
P(x) := \lambda \norm{x}_1 - \lambda \theta^T(Ax-b) +
\frac{1}{2}\norm{Ax-b}_2^2,
\en
where $\lambda$ is the penalty parameter and $\theta$ is a dual
variable for the constraints $Ax = b$.
From Lines~\ref{fal:func-def}-\ref{fal:F-def} in
Figure~\ref{ch2_alg:fal} it follows that in the $k$-th iteration
of \textsc{Algorithm FAL} we inexactly minimize the augmented
Lagrangian function
\eq
  P^{(k)}(x)  :=  \lambda^{(k)} \norm{x}_1 +
  \frac{1}{2}\norm{Ax-b-\lambda^{(k)}\theta^{(k)}}_2^2 = \lambda^{(k)}
  \norm{x}_1-\lambda^{(k)} (\theta^{(k)})^T(Ax-b) +
  \frac{1}{2} \norm{Ax-b}_2^2-\frac{1}{2}\norm{\lk\thetak}_2^2,
\en
over the set $F^{(k)} := \{x: \norm{x}_1 \leq \eta^{(k)}\}$ using
\textsc{Algorithm APG}  with the
prox function $h^{(k)} = \frac{1}{2}\norm{x-x^{(k-1)}}_2^2$.

Recall that when using \alg{APG} we need to ensure that $F^{(k)} \cap
\argmin_{x \in \reals^n}\{P^{(k)}(x)\} \neq \emptyset$.  Let
$\xkopt\in\argmin_{x\in\reals^n}P^{(k)}(x)$. Since
$P^{(k)}(x_*^{(k)})\leq P^{(k)}(\xbp)$, $A\xbp = b$ and
$\norm{\xbp}_1\leq\eta$, we have $\norm{x_*^{(k)}}_1\leq
\etak$.  Thus, $x_{\ast}^{(k)} \in F^{(k)}$.

Next, we discuss the stopping criterion set in Line~\ref{fal:stop} in
Figure~\ref{ch2_alg:fal}.
The convexity parameter of the prox-function $h^{(k)}$ is 1. Hence,
Lemma~\ref{ch1_lem:tseng_corollary} establishes that
\eq
P^{(k)}(u^{(\ell)})\leq P^{(k)}(\xkopt) + \epsilon^{(k)}\ \ \hbox{ for
}\ \ \ell \geq
\sqrt{\frac{2L}{\epsilon^{(k)}}}~\norm{x_*^{(k)}-x^{(k-1)}}_2,
\en
where $\{u^{(\ell)}\}_{\ell\in\integers_+}$ is the sequence of u-iterates when \textsc{Algorithm APG} is applied to the $k$-th subproblem and $L$ denotes the Lipschitz constant of the  gradient $\grad
f^{(k)}(x) $.
Since
$\norm{x_*^{(k)}}_1\leq \etak$ and $\norm{.}_1\geq\norm{.}_2$,
triangle inequality implies that
\begin{equation}
\norm{x_*^{(k)}-x^{(k-1)}}_2 \leq
\norm{x_*^{(k)}}_1+\norm{x^{(k-1)}}_2 \leq
\etak+\norm{x^{(k-1)}}_2. 
\end{equation}
Since $\grad f^{(k)}(x)= A^T(Ax-b-\lambda^{(k)} \theta^{(k)})$ it
follows that $L = \sigma_{\max}^2(A)$, where $\sigma_{\max}(A)$ denote
the largest singular value of  $A$. Thus, it follows that
\[
  \sqrt{\frac{2L}{\epsilon^{(k)}}}~\norm{x_*^{(k)}-x^{(k-1)}}_2
  \leq \sigma_{\max}(A)(\etak + \norm{x^{k-1}}_2)
    \sqrt{\frac{2}{\epsilon^{(k)}}} =: \ell_{\max}^{(k)}.
\]
Consequently, it follows that the stopping criterion \textsc{APGstop}
in Line~\ref{fal:stop} ensures that the iterate $x^{(k)}$ satisfies
one of the following two conditions
\begin{equation}
 \label{ch2_eq:inner-stopping-condition}
 \begin{array}{rl}
   \text{(a)} & P^{(k)}(x^{(k)})  \leq  \min_{x\in\reals^n}
   P^{(k)}(x) + \epsilon^{(k)},\\
   \text{(b)} & \exists g \in\partial P^{(k)}(x)|_{x^{(k)}} \text{ with }
  \norm{g}_2 \leq \tau^{(k)},  \\
\end{array}
\end{equation}
where $\partial P^{(k)}(x)|_{x^{(k)}}$ denotes the set of
 subgradients of the function $P^{(k)}$ at $\xk$. When $\ell\geq\ell^{(k)}_{\max}$ in \textsc{APGstop} holds, \alg{APG} returns $u^{(\ell)}$; otherwise, when $\exists g \in\partial P^{(k)}(x)|_{v^{(\ell)}}$ with $\norm{g}_2 \leq \tau^{(k)}$, \alg{APG} returns $v^{(\ell)}$. And we set $\xk$ to what \alg{APG} returns.

In Line~\ref{fal:dual}, we update the dual variables $\theta^{(k)}$ in a manner that is standard for augmented Lagrangian algorithms. This
completes the description of \alg{FAL} displayed in Figure~\ref{ch2_alg:fal}.
\vspace{-0.5cm}
\begin{figure}[t]
    \rule[0in]{6.5in}{1pt}\\
    \textsc{Algorithm FAL$\big(\{(\lk,\epsk,\tk)\}_{k\in \integers_+}, x^{(0)}, \eta\big)$}\\
    \rule[0.125in]{6.5in}{0.1mm}
    \vspace{-0.3in}
    \begin{algorithmic}[1]
    \STATE $\theta^{(1)} = 0$, $k\gets 0$, $L \gets \sigma_{\max}(AA^T)$
    \WHILE{(\textsc{FALstop} is \FALSE)}
    \STATE $k \gets k + 1$
    \STATE \label{fal:func-def} $p^{(k)}(x) := \lambda^{(k)} \norm{x}_1$,
    \quad $f^{(k)}(x) := 
    \frac{1}{2} \norm{Ax-b-\lambda^{(k)}\theta^{(k)}}_2^2$
    \STATE \label{fal:h-def}  $h^{(k)}(x) := \frac{1}{2}\norm{x-x^{(k-1)}}_2^2$
    \STATE $\etak\gets\eta+\frac{\lk}{2}\norm{\thetak}_2^2$
    \STATE \label{fal:F-def} $F^{(k)} := \{x\in\reals^n:\ \norm{x}_1\leq\etak\}$
    \STATE  $\ell^{(k)}_{\max} \gets  \sigma_{\max}(A)(\etak +
      \norm{x^{(k-1)}}_2) \sqrt{\frac{2}{\epsilon^{(k)}}}$
    \STATE \label{fal:stop} $\textsc{APGstop}  := \{\ell \geq \ell^{(k)}_{\max}\} $ \OR
    $\left\{  \exists g \in\partial P^{(k)}(x)|_{v^{(\ell)}} \text{ with }
    \norm{g}_2 \leq \tau^{(k)} \right\}$
    \STATE $x^{(k)} \gets \textsc{APG}\Big(p^{(k)},
    f^{(k)}, L, F^{(k)}, x^{(k-1)}, h^{(k)}, \textsc{APGstop}\Big)$
    \STATE \label{fal:dual} $\theta^{(k+1)} \gets \theta^{(k)} -
    \frac{Ax^{(k)}-b}{\lambda^{(k)}}$ \label{ch2_alg:lagrangian_update}
    \ENDWHILE
    \RETURN $x_{\text{sol}} \gets x^{(k)}$
    \end{algorithmic}
    \rule[0.25in]{6.5in}{0.1mm}
    \vspace{-0.5in}
    \caption{Outline of First-Order Augmented Lagrangian
      Algorithm~(FAL)}\label{ch2_alg:fal}
\end{figure}
\newpage

In the result below we establish that every limit point of the FAL
iterate sequence $\{\xk\}_{k \in \integers}$, is an optimal solution
of the basis pursuit problem.
\begin{theorem}
  \label{ch2_thm:limit-point}
  Fix $x^{(0)}\in\reals^n$, $\eta>0$ such that
  $\eta\geq\norm{\xbp}_1$ and a sequence of parameters
  $\{(\lk,\epsk,\tk)\}_{k\in\integers_+}$ such that
  \begin{enumerate}[(i)]
  \item penalty parameters, $\lambda^{(k)} \searrow 0$,
  \item approximate optimality parameters, $\epsilon^{(k)}\searrow 0$
    such that $\frac{\epsilon^{(k)}}{(\lambda^{(k )})^2}\leq B_1$ for all $k\geq 1$,
  \item subgradient tolerance parameters, $\tk\searrow 0$ such that
    $\frac{\tk}{\lk} \leq B_2$ for all $k \geq 1$, and
    $\frac{\tk}{\lk}\rightarrow 0$ as $k\rightarrow 0$.
  \end{enumerate}
  Let $\mathcal{X}=\{\xk\}_{k \in
    \integers_+}$ denote the  iterates computed by \alg{FAL} for this
  set of parameters.   Then, $\mathcal{X}$ is a bounded sequence and
  any limit point $\bar{x}$ of  $\cX$ is an optimal solution of the basis pursuit
  problem~(\ref{ch2_eq:l1_minimization}).
\end{theorem}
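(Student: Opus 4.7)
The plan is to establish four facts, each requiring separate treatment of the two stopping-criterion cases in~\eqref{ch2_eq:inner-stopping-condition}: (i) the dual iterates $\{\theta^{(k)}\}$ are uniformly bounded; (ii) the primal iterates $\{x^{(k)}\}$ are bounded; (iii) $\|Ax^{(k)}-b\|_2 \to 0$; (iv) every limit point $\bar x$ satisfies $\|\bar x\|_1 \leq \|x_*\|_1$. The workhorse identity is obtained by expanding $P^{(k)}(x^{(k)})$ and using the dual update to write $Ax^{(k)}-b = \lambda^{(k)}(\theta^{(k)}-\theta^{(k+1)})$; a short algebraic manipulation yields
\[
P^{(k)}(x^{(k)}) = \lambda^{(k)}\|x^{(k)}\|_1 + \frac{(\lambda^{(k)})^2}{2}\bigl(\|\theta^{(k+1)}\|_2^2 - \|\theta^{(k)}\|_2^2\bigr).
\]
Since $P^{(k)}(x_*) = \lambda^{(k)}\|x_*\|_1$, under case~(a) this implies the key inequality
\[
\|x^{(k)}\|_1 + \frac{\lambda^{(k)}}{2}\|\theta^{(k+1)}\|_2^2 \leq \|x_*\|_1 + \frac{\lambda^{(k)}}{2}\|\theta^{(k)}\|_2^2 + \frac{\epsilon^{(k)}}{\lambda^{(k)}}. \qquad (\star)
\]

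For (i), in case~(b) the subgradient condition directly yields $\|s - A^T\theta^{(k+1)}\|_2 \leq \tau^{(k)}/\lambda^{(k)} \leq B_2$ for some $s \in \partial\|x^{(k)}\|_1$; since $\|s\|_\infty \leq 1$ and $A$ has full row rank, $\|\theta^{(k+1)}\|_2 \leq (\sqrt n + B_2)/\sigma_{\min}(A)$. Case~(a) is more delicate: I introduce the ``virtual'' dual update $\theta_*^{(k)} := \theta^{(k)} - (Ax_*^{(k)}-b)/\lambda^{(k)}$ associated with the exact subproblem minimizer $x_*^{(k)}$. First-order optimality of $x_*^{(k)}$ gives $A^T\theta_*^{(k)} \in \partial\|x_*^{(k)}\|_1$, so the same full-rank argument bounds $\|\theta_*^{(k)}\|_2 \leq \sqrt n/\sigma_{\min}(A)$. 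The convex quadratic part of $P^{(k)}$ provides the partial strong-convexity bound $\tfrac12\|A(x^{(k)}-x_*^{(k)})\|_2^2 \leq P^{(k)}(x^{(k)}) - P^{(k)}(x_*^{(k)}) \leq \epsilon^{(k)}$, whence
\[
\|\theta^{(k+1)}-\theta_*^{(k)}\|_2 = \|A(x^{(k)}-x_*^{(k)})\|_2/\lambda^{(k)} \leq \sqrt{2\epsilon^{(k)}/(\lambda^{(k)})^2} \leq \sqrt{2B_1}.
\]
Together these give a uniform bound on $\{\theta^{(k)}\}$, and combined with $\lambda^{(k)} \to 0$ it forces $\eta^{(k)} \to \eta$, establishing (ii) via $\|x^{(k)}\|_1 \leq \eta^{(k)}$.

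For (iii), case~(b) uses $Ax^{(k)}-b = \lambda^{(k)}(\theta^{(k)}-\theta^{(k+1)}) = O(\lambda^{(k)})$, while case~(a) decomposes $\|Ax^{(k)}-b\|_2 \leq \|A(x^{(k)}-x_*^{(k)})\|_2 + \lambda^{(k)}\|\theta^{(k)}-\theta_*^{(k)}\|_2 = O(\sqrt{\epsilon^{(k)}}) + O(\lambda^{(k)})$. For (iv), fix a limit point $\bar x = \lim_j x^{(k_j)}$ and pass to a further subsequence on which either case~(a) or case~(b) holds exclusively. Along a case-(a) subsequence, passing to the limit in $(\star)$ (using $\lambda^{(k_j)}\|\theta\|_2^2 \to 0$ and $\epsilon^{(k_j)}/\lambda^{(k_j)} \leq B_1\lambda^{(k_j)} \to 0$) gives $\|\bar x\|_1 \leq \|x_*\|_1$. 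Along a case-(b) subsequence, Bolzano--Weierstrass yields limits $\theta^{(k_j+1)} \to \bar\theta$ and $s^{(k_j)} \to \bar s$; closedness of $\partial\|\cdot\|_1$ gives $\bar s \in \partial\|\bar x\|_1$, and passing to the limit in $\|s^{(k_j)} - A^T\theta^{(k_j+1)}\|_2 \leq \tau^{(k_j)}/\lambda^{(k_j)} \to 0$ forces $A^T\bar\theta \in \partial\|\bar x\|_1$. Then for any feasible $y$, $\|y\|_1 - \|\bar x\|_1 \geq \bar\theta^T A(y-\bar x) = 0$, so $\bar x$ is optimal.

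The main obstacle is controlling $\|\theta^{(k+1)}\|_2$ under case~(a): the function-value criterion does not directly constrain any KKT residual. The resolution -- comparing $x^{(k)}$ against the exact subproblem optimum and exploiting the strong convexity of $P^{(k)}$ restricted to $\mathrm{Range}(A^T)$ -- is precisely what forces the rate assumption $\epsilon^{(k)}/(\lambda^{(k)})^2 \leq B_1$ in the hypotheses.
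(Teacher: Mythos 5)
Your proof is correct in substance and reaches the paper's conclusions, but the key technical step --- bounding $\|\theta^{(k+1)}\|_2$ uniformly when the $k$-th subproblem terminates under the function-value criterion (a) --- is handled by a genuinely different argument. The paper invokes Corollary~\ref{ch2_cor:grad_norm_bound} (which rests on Theorem~\ref{ch2_thm:grad_convergence}, a one-step prox-gradient/descent-lemma computation showing $\|\nabla f(\bar x)\|_\infty \leq \sqrt{2L\epsilon}+\lambda$ at any $\epsilon$-minimizer of $\lambda\|x\|_1+f(x)$), and then converts this into $\|\theta^{(k+1)}\|_2 \leq \frac{\sqrt n}{\sigma_{\min}(A)}\bigl(\sigma_{\max}(A)\sqrt{2\epsilon^{(k)}/(\lambda^{(k)})^2}+1\bigr)$ via the dual update and the full row rank of $A$. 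You instead compare $x^{(k)}$ with an exact subproblem minimizer $x_*^{(k)}$: its first-order optimality places $A^T\theta_*^{(k)}$ in $\partial\|x_*^{(k)}\|_1$ (hence $\|\theta_*^{(k)}\|_2\leq\sqrt n/\sigma_{\min}(A)$), and the $1$-strong convexity of the quadratic term as a function of $Ax$ gives $\tfrac12\|A(x^{(k)}-x_*^{(k)})\|_2^2\leq\epsilon^{(k)}$, hence $\|\theta^{(k+1)}-\theta_*^{(k)}\|_2\leq\sqrt{2B_1}$. Both routes deliver a uniform $B_\theta$ and both expose why the scaling $\epsilon^{(k)}/(\lambda^{(k)})^2\leq B_1$ is exactly what is needed; yours bypasses the appendix lemma entirely and also yields a slightly cleaner feasibility estimate $\|Ax^{(k)}-b\|_2\leq\|A(x^{(k)}-x_*^{(k)})\|_2+\lambda^{(k)}\|\theta^{(k)}-\theta_*^{(k)}\|_2$, where the paper instead bounds $\tfrac12\|Ax^{(k)}-b-\lambda^{(k)}\theta^{(k)}\|_2^2$ by $P^{(k)}(x^{(k)})$. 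The remaining steps (boundedness via $\eta^{(k)}$, the case-(a) limit of $(\star)$, and the case-(b) extraction of $\bar q = A^T\bar\theta$ followed by the KKT sufficiency argument) mirror the paper's. One slip worth fixing: your displayed ``workhorse identity'' is wrong as stated. Correctly, $P^{(k)}(x^{(k)}) = \lambda^{(k)}\|x^{(k)}\|_1 + \frac{(\lambda^{(k)})^2}{2}\|\theta^{(k+1)}\|_2^2$, while $P^{(k)}(x_*) = \lambda^{(k)}\|x_*\|_1 + \frac{(\lambda^{(k)})^2}{2}\|\theta^{(k)}\|_2^2$ (not $\lambda^{(k)}\|x_*\|_1$); the $-\|\theta^{(k)}\|_2^2$ term you attach to $P^{(k)}(x^{(k)})$ actually belongs to $P^{(k)}(x_*)$. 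The two misstatements cancel in the difference, so the inequality $(\star)$ you actually use is correct and the argument goes through.
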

\begin{proof}
Since $\ell_{\max}^{(k)}$ is finite for all $k \geq 1$, it follows that
 the   sequence $\cX$ exists.

As a first step towards establishing that $\cX$ is bounded, we
establish a uniform bound on the sequence of dual multipliers
$\{\theta^{(k)}\}_{k\in\integers_+}$.
Suppose in the  $k$-th FAL iteration \alg{APG} terminates with the
iterate $x^{(k)}$ satisfying \eqref{ch2_eq:inner-stopping-condition}(a). Then
Corollary~\ref{ch2_cor:grad_norm_bound} applied to $P^{(k)}(x)=\lk
\norm{x}_1+ \norm{Ax-b-\lk\thetak}_2^2$ guarantees that
\eq
\norm{A^T(Ax^{(k)}-b-\lambda^{(k)}\theta^{(k)})}_\infty \leq
\sqrt{2\epsilon^{(k)}}\ \sigma_{\max}(A)+\lambda^{(k)}.
\en
Instead, if the iterate  $x^{(k)}$ satisfies
\eqref{ch2_eq:inner-stopping-condition}(b), i.e. there exists
$q^{(k)}\in\partial \norm{x}_1|_{x^{(k)}}$ such that
$\norm{\lambda^{(k)}q^{(k)} +
  A^T(Ax^{(k)}-b-\lambda^{(k)}\theta^{(k)})}_2\leq\tau^{(k)}$, then
\begin{align}
\norm{A^T(Ax^{(k)}-b-\lambda^{(k)}\theta^{(k)})}_\infty \leq \tau^{(k)} +
\lambda^{(k)} \norm{q^{(k)}}_\infty \leq \tau^{(k)} +
\lambda^{(k)}, \label{ch2_eq:subgradient_feasibility_bound}
\end{align}
where the second inequality follows from the fact that
$\norm{q^{(k)}}_{\infty} \leq 1$ for all $q^{(k)} \in \partial
\norm{x}_1|_{x^{(k)}}$.

Since $\theta^{(1)}=0$,
$\theta^{(k+1)}=\theta^{(k)}-\frac{Ax^{(k)}-b}{\lambda^{(k)}}$ for all
$k\geq 1$ and $A$ has full row-rank, it follows that
\begin{eqnarray}
\norm{\theta^{(k+1)}}_2 & \leq &  \frac{1}{\lk\sigma_{\min}(A)}
\norm{A^T(Ax^{(k)}-b-\lambda^{(k)}\theta^{(k)})}_2, \nonumber\\
&  \leq  &
\frac{\sqrt{n}}{\sigma_{\min}(A)}
\left(\max\left\{\sigma_{\max}(A)\sqrt{\frac{2\epsilon^{(k)}}{(\lambda^{(k)})^2}},
    \ \frac{\tau^{(k)}}{\lambda^{(k)}}\right\}+1\right), \quad
\forall k \geq 1. \label{ch2_eq:theta_induction}
\end{eqnarray}
The bounds
$\frac{\epsilon^{(k)}}{(\lambda^{(k)})^2}\leq B_1$, and
$\frac{\tau^{(k)}}{\lambda^{(k)}}\leq B_2$, together with
\eqref{ch2_eq:theta_induction} imply that
\begin{equation}
  \norm{\theta^{(k)}}_2\leq   B_\theta :=
  \frac{\sqrt{n}}{\sigma_{\min}(A)}\left(\max\{\sqrt{2B_1}\
    \sigma_{\max}(A),\ B_2\}+1\right), \quad \forall k> 1.
  \label{ch2_eq:theta_bound}
\end{equation}
From this bound, it follows that
\begin{equation}
\label{ch2_eq:fal_iter_bound}
\norm{x^{(k)}}_1\leq \etak  = \eta + \frac{\lk}{2}
\norm{\theta^{k}}_2^2 \leq B_x := \eta + \frac{1}{2} \lambda^{(1)}B_{\theta}^2.
\end{equation}
Thus, $\cX$ is a bounded sequence and it has a
limit point. Let $\bar{x}$ denote any limit point and let
$\cK\subset\integers_+$ denote a subsequence such that $\lim_{k \in \cK} x^{(k)} = \bar{x}$.


Suppose that there exists a further sub-sequence $\cK_a\subset\cK$ such that
for all $k\in\cK_a$  calls to \alg{APG} terminates with an
iterate $\xk$ satisfying \eqref{ch2_eq:inner-stopping-condition}(a).
Then, for $k\in\cK_a$, we have that
\begin{eqnarray}
  \lefteqn{\norm{x^{(k)}}_1  \leq  \frac{P^{(k)}(\xk)}{\lk} \leq \frac{P^{(k)}(x^{(k)}_{\ast}) +
    \epsilon^{(k)}}{\lambda^{(k)}}
  \leq  \frac{P^{(k)}(\xbp)+\epsilon^{(k)}}{\lambda^{(k)}}} \nonumber\\
  & = &  \norm{\xbp}_1 +
  \frac{\lambda^{(k)}}{2}\norm{\theta^{(k)}}_2^2 +
  \frac{\epsilon^{(k)}}{\lambda^{(k)}}
  \leq  \norm{\xbp}_1 +
  \frac{1}{2}\lambda^{(k)}B_{\theta}^2 + \lk B_1, \label{ch2_eq:inexact_minimizer_bound}
\end{eqnarray}
where the first inequality follows from the fact $f^{(k)}(x) \geq 0$,
second follows from the stopping condition, the third follows from the fact that
$P^{(k)}(x_*^{(k)})\leq P^{(k)}(\xbp)$, the equality follows from the
fact that $A\xbp = b$, and the last inequality follows from the
bounds $\norm{\theta^{(k)}}_2 \leq B_{\theta}$ and
  $\frac{\epsilon^{(k)}}{(\lk)^2} \leq B_1$.
Since $\lk \rightarrow 0$, taking the limit 
along $\cK_a$ we get
\begin{align}
\norm{\bar{x}}_1=\lim_{k\in\cK_a}\norm{x^{(k)}}_1\leq\norm{\xbp}_1 +
\lim_{k\in\cK_a}\left\{\lk\left(\frac{1}{2}B_\theta^2 + B_1\right)\right\}
= \norm{\xbp}_1. \label{ch2_eq:inexact_minimizer_optimality}
\end{align}
Next, consider feasibility of the limit point $\bar{x}$.
\begin{eqnarray*}
\norm{A\xk-b-\lk\thetak}_2^2 &\leq& \Pk(\xk) \leq \Pk(\xkopt)+\epsk \leq \Pk(\xbp)+\epsk,\\
&\leq& \lk\norm{\xbp}_1+\frac{1}{2}\norm{\lk\thetak}_2^2+\epsk\leq\lk\norm{\xbp}_1+\frac{1}{2}(\lk B_\theta)^2+\epsk,
\end{eqnarray*}
where the first inequality follows from the fact $\lk\norm{\xk} \geq 0$, the third follows
from the fact that $P^{(k)}(\xkopt) \leq P^{(k)}(\xbp)$, the fourth follows from the fact $A\xbp = b$,
and the last follows from the bound $\norm{\theta^{(k)}}_2 \leq
B_{\theta}$.
Taking the limit along $\cK_a$, we have
\eq
\frac{1}{2}\norm{A\bar{x}-b}_2^2 \leq 0,
\en
i.e. $A\bar{x} = b$.
Since  $\bar{x}$ is feasible, and $\norm{\bar{x}}_1 \leq
\norm{\xbp}_1$, it follows that $\bar{x}$ is an optimal solution for
the basis pursuit problem~\eqref{ch2_eq:l1_minimization}.

Now, consider the complement case, i.e. there exists $K\in\cK$ such
that for all $k\in\cK_b := \cK \cap \{k
\geq K\}$, calls to \alg{APG} terminate with an iterate
$x^{(k)}$ that satisfies \eqref{ch2_eq:inner-stopping-condition}(b).
For all $k \in \cK_b$,   there exists
$q^{(k)}\in\partial \norm{x}_1|_{x^{(k)}}$ such that
\begin{align}
\norm{\lambda^{(k)}q^{(k)}+A^T(Ax^{(k)}-b
  -\lambda^{(k)}\theta^{(k)})}_2\leq\tau^{(k)}.  \label{ch2_eq:detailed_subgradient_condition}
\end{align}
Then, for all $k \in \cK_b$,
\begin{eqnarray*}
  \norm{A\xk - b}_2 & \leq & \frac{1}{\sigma_{\min}(A)}
  \norm{A^T(A\xk-b)}_2,\\
  &\leq & \frac{1}{\sigma_{\min}(A)} \Big( \norm{A^T(A\xk-b -
    \lk\theta^{(k)}) + \lk q^{(k)}}_2 + \norm{\lk q^{(k)}}_2 + \norm{A^T(\lk\theta^{(k)})}_2\Big) ,\\
  & \leq & \frac{1}{\sigma_{\min}(A)} \big( \tau^{(k)} + \lk
  \sqrt{n} + \sigma_{\max}(A) \lk B_{\theta}\Big),
\end{eqnarray*}
where $\sigma_{\min}(A)$ denotes the smallest non-zero singular value
of $A$. The first inequality follows from the definition of
$\sigma_{\min}(A)$, the second inequality follows from triangle
inequality, and last inequality follows from
\eqref{ch2_eq:detailed_subgradient_condition}, the bound
$\norm{\theta^{(k)}}_2 \leq B_{\theta}$, and fact that
$\norm{q}_2 \leq \sqrt{n}$ for any $q\in\partial
\norm{x}_1|_{x^{(k)}}$. Taking the limit along $\cK_b$, we have
$\norm{A\bar{x}-b}_2\leq 0$, or equivalently $A\bar{x} = b$.

For all $k\in \cK_b$,
$q^{(k)}\in\partial\norm{x}_1|_{x^{(k)}}$, therefore,
$\norm{q^{(k)}}_{\infty} \leq 1$.
Hence, there exists a subsequence $\cK'_b\subset\cK_b$ such
that $\lim_{k\in\cK'_b}q^{(k)}=\bar{q}$ exists. One can easily show that
$\bar{q}\in\partial\norm{x}_1|_{\bar{x}}$. Dividing both sides of
\eqref{ch2_eq:detailed_subgradient_condition} by $\lambda^{(k)}$, we get
\begin{align}
  \norm{q^{(k)}-A^T\theta^{(k+1)}}_2\leq\frac{\tau^{(k)}}{\lambda^{(k)}},
  \label{ch2_eq:subgradient_theta_convergence}
\end{align}
for all $k\in\cK'_b$. Since
$\lim_{k\in\cK'_b}q^{(k)}=\bar{q}$,
$\lim_{k\in\integers_+}\frac{\tau^{(k)}}{\lambda^{(k)}}=0$ and $A$ has
full row rank, it follows that $\{\theta^{(k)}: k \in \cK_b'\}$ is a
Cauchy sequence; therefore,
$\lim_{k\in\cK'_b}\theta^{(k+1)}=\bar{\theta}$ exists.
Taking the limit
of both sides of \eqref{ch2_eq:subgradient_theta_convergence} along $\cK_b'$, we have
\begin{align}
\bar{q}=A^T\bar{\theta}. \label{ch2_eq:subgradient_optimality}
\end{align}
\eqref{ch2_eq:subgradient_optimality} together with that the fact that
$A\bar{x} = b$ and $q \in \partial \norm{\bar{x}}_1$, it follows that
the KKT conditions for optimality is satisfied at $\bar{x}$; thus,
$\bar{x}$ is optimal for the basis pursuit problem.
\end{proof}

In compressed sensing exact recovery occurs only when
$\min\{\|x\|_1 : Ax=b\}$ has a {\em unique} solution.
The following Corollary establishes that FAL converges to this
solution.
\begin{corollary}
  \label{ch2_cor:unique}
  Suppose the basis pursuit problem $\min\{\|x\|_1 : Ax=b\}$ has  a
  {\em unique} optimal solution~$\xbp$. Let $\{x^{(k)}\}_{k \in \integers_+}$
  denote the sequence of iterates generated by \alg{FAL}, displayed in Figure~\ref{ch2_alg:fal}, corresponding to a sequence
  $\{(\lk,\epsk,\tk)\}_{k\in\integers_+}$ that satisfies all the conditions in Theorem~\ref{ch2_thm:limit-point}. Then
  $\lim_{k\rightarrow\infty}x^{(k)} = \xbp$.
\end{corollary}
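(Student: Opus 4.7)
The plan is to deduce the corollary directly from Theorem~\ref{ch2_thm:limit-point} via a standard compactness argument: in finite dimensions a bounded sequence whose every limit point coincides with a single value must converge to that value.

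First, I would invoke Theorem~\ref{ch2_thm:limit-point} to obtain two facts for free: the iterate sequence $\cX = \{x^{(k)}\}_{k \in \integers_+}$ is bounded (uniformly, by the constant $B_x$ from \eqref{ch2_eq:fal_iter_bound}), and every limit point $\bar{x}$ of $\cX$ is an optimal solution of the basis pursuit problem \eqref{ch2_eq:l1_minimization}. Under the uniqueness assumption of the corollary, this immediately forces $\bar{x} = \xbp$ for \emph{every} limit point of $\cX$.

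Second, I would establish convergence of the full sequence by contradiction. Suppose $x^{(k)} \not\to \xbp$. Then there exists $\epsilon > 0$ and an infinite subsequence $\cK \subset \integers_+$ such that $\norm{x^{(k)} - \xbp}_2 \geq \epsilon$ for all $k \in \cK$. Since $\cX$ is bounded, the subsequence $\{x^{(k)}\}_{k \in \cK}$ is bounded in $\reals^n$, so by the Bolzano--Weierstrass theorem it has a convergent sub-subsequence $\{x^{(k)}\}_{k \in \cK'}$ with some limit $\tilde{x}$. By construction $\norm{\tilde{x} - \xbp}_2 \geq \epsilon > 0$, yet $\tilde{x}$ is a limit point of $\cX$, contradicting the consequence of Theorem~\ref{ch2_thm:limit-point} together with uniqueness that every limit point equals $\xbp$. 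Hence $x^{(k)} \to \xbp$.

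There is essentially no technical obstacle here: the work has been done in Theorem~\ref{ch2_thm:limit-point}, and the remaining argument is the elementary observation that a bounded sequence in a finite-dimensional normed space converges whenever its set of accumulation points is a singleton. The only thing worth being careful about is noting explicitly that boundedness plus Bolzano--Weierstrass is what converts ``every convergent subsequence has limit $\xbp$'' into ``the full sequence converges to $\xbp$,'' since this implication genuinely requires the finite-dimensional setting that $\reals^n$ provides.
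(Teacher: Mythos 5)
Your argument is correct and is precisely the reasoning the paper intends: the corollary is stated without a written proof because it follows immediately from Theorem~\ref{ch2_thm:limit-point} (boundedness of $\{x^{(k)}\}$ plus the fact that every limit point is optimal, hence equal to the unique $\xbp$) via the standard Bolzano--Weierstrass argument you give. No gaps.
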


Next, we characterize the finite iteration performance of FAL. This
analysis will lead to a convergence rate result in
Theorem~\ref{ch2_thm:epsilon_convergence}.
\begin{theorem}
  \label{ch2_thm:finite}
  Fix $x^{(0)}\in\reals^n$, $\eta>0$ such that
  $\eta\geq\norm{\xbp}_1$ and a sequence of parameters
  $\{(\lk,\epsk,\tk)\}_{k\in\integers_+}$ satisfying all the conditions
  in Theorem~\ref{ch2_thm:limit-point}. In addition, suppose for all $k
  \geq 1$, $\tk \leq c~\epsk$ for
  some $0<c<1$.
  Let $\{x^{(k)}\}_{k \in \integers_+}$
  denote the sequence of iterates generated by \alg{FAL},
  displayed in Figure~\ref{ch2_alg:fal}, for this set of
  parameters. Then, for all $k \geq 1$,
  \begin{enumerate}[(i)]
    \item $\norm{Ax^{(k)}-b}_2 \leq 2B_\theta
      \lambda^{(k)}$, \label{ch2_eq:infeasibility_bound}
    \item $\left|\norm{x^{(k)}}_1 - \norm{x_*}_1\right| \leq \max
      \left\{\left(\frac{B_\theta^2}{2}+B_1~\max\{1,~2c B_x\}\right),
        \frac{\left(\frac{\sqrt{n}}{\sigma_{\min}(A)}+B_\theta\right)^2}{2}\right\}
      \lambda^{(k)}$, \label{ch2_eq:optimality_bound}
  \end{enumerate}
  where $ B_\theta = \frac{\sqrt{n}}{\sigma_{\min}(A)}\left(\max\left\{\sqrt{2B_1}\
      \sigma_{\max}(A),\ B_2\right\}+1\right)$, and $B_x = \eta +
  \frac{1}{2} \lambda^{(1)}B_{\theta}^2$.
\end{theorem}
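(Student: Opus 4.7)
\textbf{Plan for the proof of Theorem~\ref{ch2_thm:finite}.}

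The plan is to read off (i) directly from the dual update and the uniform dual bound already established in the proof of Theorem~\ref{ch2_thm:limit-point}, and then establish (ii) by bounding $\|x^{(k)}\|_1-\|x_*\|_1$ from above and from below using two different mechanisms.

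For part (i), rearranging the dual update in Line~\ref{fal:dual} gives $Ax^{(k)}-b=\lambda^{(k)}(\theta^{(k)}-\theta^{(k+1)})$. Combining with the uniform bound $\|\theta^{(k)}\|_2\leq B_\theta$ from \eqref{ch2_eq:theta_bound} (which was proved in Theorem~\ref{ch2_thm:limit-point} under the same hypotheses) and the triangle inequality immediately yields $\|Ax^{(k)}-b\|_2\leq 2B_\theta\,\lambda^{(k)}$.

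For the upper bound direction of (ii), the goal is to prove a uniform estimate of the form $P^{(k)}(x^{(k)})\leq P^{(k)}(x_*)+\max\{1,2cB_x\}\,\epsilon^{(k)}$ regardless of which branch of \textsc{APGstop} fires. In case~\eqref{ch2_eq:inner-stopping-condition}(a), the estimate is immediate from the stopping criterion together with $P^{(k)}(x_*^{(k)})\leq P^{(k)}(x_*)$. In case~\eqref{ch2_eq:inner-stopping-condition}(b), I would apply convexity of $P^{(k)}$ at $x_*^{(k)}$ to the subgradient $g\in\partial P^{(k)}(x^{(k)})$ with $\|g\|_2\leq\tau^{(k)}$, bound $\|x_*^{(k)}-x^{(k)}\|_2\leq 2B_x$ using the uniform $\ell_1$-bound $\eta^{(k)}\leq B_x$ from \eqref{ch2_eq:fal_iter_bound}, and invoke the new hypothesis $\tau^{(k)}\leq c\,\epsilon^{(k)}$ to convert the subgradient inequality into $P^{(k)}(x^{(k)})\leq P^{(k)}(x_*^{(k)})+2cB_x\,\epsilon^{(k)}$. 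Expanding the definition of $P^{(k)}$ around $A x_*=b$ gives $\lambda^{(k)}\|x^{(k)}\|_1\leq\lambda^{(k)}\|x_*\|_1+\tfrac{1}{2}(\lambda^{(k)})^2\|\theta^{(k)}\|_2^2+\max\{1,2cB_x\}\,\epsilon^{(k)}$; dividing by $\lambda^{(k)}$ and using $\|\theta^{(k)}\|_2\leq B_\theta$ and $\epsilon^{(k)}/(\lambda^{(k)})^2\leq B_1$ delivers the first term inside the max in (ii).

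For the lower bound direction, the argument does not use optimality of $x^{(k)}$ at all; it only uses feasibility of $x^{(k)}$ up to error $O(\lambda^{(k)})$. I would project $x^{(k)}$ onto the feasible set via $y^{(k)}:=x^{(k)}+A^T(AA^T)^{-1}(b-Ax^{(k)})$, so that $Ay^{(k)}=b$ and hence $\|x_*\|_1\leq\|y^{(k)}\|_1\leq\|x^{(k)}\|_1+\|y^{(k)}-x^{(k)}\|_1$. The displacement is estimated by the chain $\|y^{(k)}-x^{(k)}\|_1\leq\sqrt{n}\,\|y^{(k)}-x^{(k)}\|_2\leq\frac{\sqrt{n}}{\sigma_{\min}(A)}\|A(y^{(k)}-x^{(k)})\|_2=\frac{\sqrt{n}}{\sigma_{\min}(A)}\|Ax^{(k)}-b\|_2$, after which part~(i) gives $\|x_*\|_1-\|x^{(k)}\|_1\leq\frac{2\sqrt{n}\,B_\theta}{\sigma_{\min}(A)}\,\lambda^{(k)}$. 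Finally the arithmetic-geometric inequality $2ab\leq\tfrac12(a+b)^2$ with $a=\sqrt{n}/\sigma_{\min}(A)$ and $b=B_\theta$ rewrites this cleanly as the second term inside the max in (ii). Taking the larger of the two one-sided bounds completes the proof.

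The main obstacle is the case~(b) analysis: the stopping criterion there is only a subgradient-norm condition, not a function-value gap, and converting it into a useable $O(\epsilon^{(k)})$ objective-gap bound is exactly the reason for the new hypothesis $\tau^{(k)}\leq c\,\epsilon^{(k)}$; I expect the constant $2cB_x$ in the statement to arise precisely from the diameter estimate $\|x_*^{(k)}-x^{(k)}\|_2\leq 2B_x$ used there.
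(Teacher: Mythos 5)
Your proposal is correct. Part (i) and the upper bound in part (ii) follow essentially the paper's own argument: the paper likewise reads (i) off the dual update in Line~\ref{fal:dual} together with $\norm{\thetak}_2\leq B_\theta$, and in case \eqref{ch2_eq:inner-stopping-condition}(b) it likewise converts the small-subgradient condition into an $\cO(\epsk)$ objective gap via convexity, a $2B_x$ diameter bound, and $\tk\leq c\,\epsk$ (the paper applies the subgradient inequality at $\xbp$ rather than at $\xkopt$, but both points lie in the $\ell_1$-ball of radius $B_x$, so the constant $2cB_xB_1$ comes out the same). Where you genuinely diverge is the lower bound in (ii). The paper proves $\norm{\xk}_1\geq\norm{\xbp}_1-\tfrac{1}{2}\bigl(\tfrac{\sqrt{n}}{\sigma_{\min}(A)}+B_\theta\bigr)^2\lk$ by duality: it takes an optimal solution $w_*$ of the LP dual in \eqref{ch2_eq:dual_problem}, observes that $w_*$ is feasible for the dual of the penalized subproblem \eqref{ch2_eq:penalty-primal-dual}, and combines weak duality for the subproblem with strong LP duality ($b^Tw_*=\norm{\xbp}_1$) and the bound $\norm{w_*}_2\leq\sqrt{n}/\sigma_{\min}(A)$. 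You instead project $\xk$ onto the affine feasible set via the pseudo-inverse, control the $\ell_1$-displacement by $\tfrac{\sqrt{n}}{\sigma_{\min}(A)}\norm{A\xk-b}_2$, and then invoke part (i) and AM--GM. Your route uses no duality at all, needs only the feasibility estimate from (i), and actually produces the sharper constant $2\sqrt{n}B_\theta/\sigma_{\min}(A)$ before you relax it to the stated $\tfrac{1}{2}(a+b)^2$ form; the paper's route avoids the explicit pseudo-inverse but introduces the auxiliary dual pair. Both are valid under the paper's standing assumption that $A$ has full row rank, and both land on exactly the constants in the theorem statement.
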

\begin{proof}
First note that we have established the uniform bounds
$\norm{\theta^{(k)}}_2 \leq B_{\theta}$ and $\norm{\xk}_1 \leq B_x$ in
\eqref{ch2_eq:theta_bound}
and \eqref{ch2_eq:fal_iter_bound}, respectively.

The dual update in Line~\ref{fal:dual} of \alg{FAL} implies that
\begin{eqnarray*}
\norm{Ax^{(k)}-b}_2
 \leq  \norm{Ax^{(k)}-b-\lambda^{(k)}\theta^{(k)}}_2+\lambda^{(k)}\norm{\theta^{(k)}}_2
 =  \lambda^{(k)} \norm{\theta^{(k+1)}}_2 + \lambda^{(k)}\norm{\theta^{(k)}}_2
 \leq  2B_\theta \lambda^{(k)},
\end{eqnarray*}
where the last inequality follows the fact that
$\norm{\theta^{(k)}}_2 \leq B_{\theta}$. This establishes~\eqref{ch2_eq:infeasibility_bound}.

The dual update in Line~\ref{fal:dual} of \alg{FAL} also implies that
\eq
  P^{(k)}(x^{(k)})  = \lambda^{(k)}\norm{x^{(k)}}_1 +
  \frac{1}{2}\norm{Ax^{(k)}-b-\lambda^{(k)}\theta^{(k)}}_2^2
   =  \lambda^{(k)} \left(\norm{x^{(k)}}_1 + \frac{\lambda^{(k)}}{2}
     \norm{\theta^{(k+1)}}_2^2\right).
\en
Thus, for all $k\geq 1$,
\begin{equation}
  \norm{\xk}_1 \geq  \frac{P^{(k)}(\xkopt)}{\lk} -
  \frac{\lk}{2}\norm{\theta^{(k+1)}}_2^2.
  \label{ch2_eq:lbd-1}
\end{equation}
Next, we establish a lower bound for $P^{(k)}(\xkopt)$. Consider the
following primal-dual pair of problems:
\begin{eqnarray}
  \begin{array}[t]{rl}
    \min_{x\in\reals^n} & \norm{x}_1\\
    \mbox{subject to} & Ax = b,
  \end{array}
  \quad \quad
  \begin{array}[t]{rl}
    \max_{w\in\reals^m} & b^Tw\\
    \mbox{subject to} & \norm{A^Tw}_{\infty} \leq 1.
  \end{array} \label{ch2_eq:dual_problem}
\end{eqnarray}
Let $w_*\in\reals^m$ denote an optimal solution of the maximization problem in
\eqref{ch2_eq:dual_problem}.  Next, consider the primal-dual pair of
problems corresponding to the penalty formulation for the basis pursuit problem:
\begin{equation}
  \label{ch2_eq:penalty-primal-dual}
  \begin{array}[t]{rl}
      \min_{x\in\reals} & \lambda \norm{x}_1 + \frac{1}{2}\norm{Ax-b-\lambda\theta}^2_2\\
    \end{array}
    \quad \quad
    \begin{array}[t]{rl}
      \max_{w\in\reals^m} & \lambda (b+\lambda\theta)^Tw -\frac{\lambda^2}{2}\norm{w}_2^2 \\
      \mbox{subject to} & \norm{A^Tw}_{\infty} \leq 1.
    \end{array}
  \end{equation}
  Since $w_*$ is feasible for the maximization problem in
  \eqref{ch2_eq:penalty-primal-dual} is as well, it follows that
  \begin{eqnarray}
    P^{(k)}(\xkopt) & = & \min_{x\in\reals^n}
    \big\{\lambda^{(k)}\norm{x}_1 +
    \frac{1}{2}\norm{Ax-b-\lambda^{(k)}\theta^{(k)}}_2^2\big\} \nonumber\\
    & \geq & \lambda^{(k)} \left(b^Tw_*+\lk (\thetak)^T w_*-
      \frac{\lambda^{(k)}}{2}\norm{w_*}_2^2\right),\label{ch2_eq:dualopt}\\
    & \geq & \lambda^{(k)}\left(\norm{x_*}_1-\lk \norm{\thetak}_2
      \norm{w_*}_2
      -\frac{\lambda^{(k)}}{2}\norm{w_*}_2^2\right),\label{ch2_eq:dual_cauchy}
  \end{eqnarray}
  where \eqref{ch2_eq:dualopt} follows from weak duality for primal-dual
  problems~\eqref{ch2_eq:penalty-primal-dual} and
  \eqref{ch2_eq:dual_cauchy}
  follows from strong duality for primal-dual
  problems~\eqref{ch2_eq:dual_problem}, i.e. $b^Tw_*=\norm{x_*}_1$, and
  Cauchy-Schwartz inequality.
  Thus, (\ref{ch2_eq:lbd-1}) implies that
  \eq
    \norm{x^{(k)}}_1
    \geq \norm{x_*}_1-\lk\left(\norm{\thetak}_2 \norm{w_*}_2 +
      \frac{1}{2}\norm{w_*}_2^2+\frac{1}{2}\norm{\theta^{(k+1)}}_2^2\right)
    \geq \norm{x_*}_1-\frac{\left(\frac{\sqrt{n}}{\sigma_{\min}(A)}+
        B_\theta\right)^2}{2}\lambda^{(k)} \label{ch2_eq:suboptimality_lower_bound},
  \en
  where the second inequality follows from the fact that $\norm{A^Tw_*}_\infty\leq 1$.

  The final step in the proof is to establish the upper bound in
  \eqref{ch2_eq:optimality_bound}.
  Suppose the iterate $\xk$ satisfies the stopping condition
  \eqref{ch2_eq:inner-stopping-condition}(a).
  In~\eqref{ch2_eq:inexact_minimizer_bound} we show that
  \begin{equation}
    \norm{x^{(k)}}_1 \leq \norm{x_*}_1+
    \frac{\lambda^{(k)}}{2}\norm{\theta^{(k)}}_2^2+\frac{\epsilon^{(k)}}{\lambda^{(k)}}
    \leq \norm{\xbp}_1 + \lk \left( \frac{1}{2}B_{\theta}^2 + B_1\right),
    \label{ch2_eq:opt_upperbound_a}
  \end{equation}
  where we have used the fact that $\norm{\theta^{(k)}}_2 \leq
  B_{\theta}$ and $\frac{\epsilon^{(k)}}{(\lk)^2} \leq B_1$.

  Next, suppose $\xk$ satisfies
  \eqref{ch2_eq:inner-stopping-condition}(b). Let $g^{(k)} \in \partial
  P^{(k)}(\xk)$ such that $\norm{g^{(k)}}_2 \leq \tk$. Then the convexity of
  $P^{(k)}$ implies  that
  \begin{align}
    P^{(k)}(\xk) - P^{(k)}(\xbp)  \leq - (g^{(k)})^T(\xbp-\xk)\leq
    \norm{g^{(k)}}_2 \norm{\xbp-\xk}_2\leq \tk \norm{\xbp-\xk}_2.
  \end{align}
  Now, an argument similar to the one that establishes the
  bound~\eqref{ch2_eq:inexact_minimizer_bound}, implies that
  \begin{align}
    \norm{\xk}_1\leq \norm{\xbp}_1 + \frac{\lk}{2}\norm{\thetak}_2^2 +
    \frac{\tk}{\lk} \norm{\xbp-\xk}_2
    \leq
    \norm{\xbp}_1 + \lk \left( \frac{1}{2} B_{\theta}^2 + 2cB_xB_1\right),
    \label{ch2_eq:opt_upperbound_b}
  \end{align}
  where we have used the fact that $\norm{\theta^{(k)}}_2 \leq
  B_{\theta}$, $\tk \leq c \epsilon^{(k)}$,  and
  $\frac{\epsilon^{(k)}}{(\lk)^2} \leq B_1$.
  The upper bound in \eqref{ch2_eq:optimality_bound} follows from
  \eqref{ch2_eq:opt_upperbound_a} and
  \eqref{ch2_eq:opt_upperbound_b}. 
\end{proof}

Next, we use the bounds in Theorem~\ref{ch2_thm:finite} to compute a
bound on the convergence rate of \alg{FAL}.
\begin{theorem}
  \label{ch2_thm:epsilon_convergence}
  Fix an $0<\alpha<1$. Then there exists and one can construct a sequence of parameters
  $\{(\lk,\epsk,$ $\tk)\}_{k\in\integers_+}$ such that the iterates generated
  by \alg{FAL}, displayed in Figure~\ref{ch2_alg:fal}, are
  $\epsilon$-feasible, i.e. $\norm{A\xk - b}_2 \leq \epsilon$, and
  $\epsilon$-optimal, $\left|~\norm{\xk}_1 - \norm{\xbp}_1\right|
  \leq \epsilon$, for all $k\geq N_{\rm
    FAL}(\epsilon)=\cO\left(\log_{\frac{1}{\alpha}}\left(\frac{1}{\epsilon}\right)\right)$.
  Moreover, FAL requires 
  \begin{equation}
  \label{ch2_eq:order_relation_2}
  N_{\rm mat}\leq 2n\kappa(A)^2 \left( \frac{16
        \norm{\xbp}_1}{\alpha(1-\alpha)} \cdot\frac{1}{\epsilon}+
      \frac{9}{\alpha}\cdot
      \log_{\frac{1}{\alpha}}\left(\frac{8n\kappa^2(A)}{\epsilon}\right)
      \right) =\cO\left(\frac{1}{\epsilon}\right),
  \end{equation}
  matrix-vector multiplies to compute an $\epsilon$-feasible,
  $\epsilon$-optimal iterate.
\end{theorem}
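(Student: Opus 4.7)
The plan is to take Theorem~\ref{ch2_thm:finite} as a black box and reduce the problem of selecting $\{(\lambda^{(k)},\epsilon^{(k)},\tau^{(k)})\}$ to a geometric decay schedule. Concretely, fix $0<\alpha<1$ and set
\[
\lambda^{(k)} = \alpha^{k-1}\lambda^{(1)}, \qquad \epsilon^{(k)} = B_1\,(\lambda^{(k)})^2, \qquad \tau^{(k)} = c\,\epsilon^{(k)},
\]
with $c\in(0,1)$ and $B_1,B_2$ chosen so that the conditions of Theorem~\ref{ch2_thm:limit-point} and Theorem~\ref{ch2_thm:finite} are all met (in particular $\epsilon^{(k)}/(\lambda^{(k)})^2=B_1$ and $\tau^{(k)}/\lambda^{(k)}\to 0$ both hold automatically for this choice). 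The tunable constants $\lambda^{(1)}, B_1, c$ can then be selected to make the leading coefficients in the final bound match the stated constants, and $\alpha$ controls the decay rate.

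First I would translate the feasibility/optimality bounds in Theorem~\ref{ch2_thm:finite} into a requirement on $\lambda^{(k)}$. Both bounds (i) and (ii) are of the form (constant)$\cdot\lambda^{(k)}$, where the constants depend only on $A$, $n$, $B_1$, $B_2$, and $\eta$. Taking $C_\star$ to be the maximum of these two constants, $\epsilon$-feasibility and $\epsilon$-optimality hold as soon as $\alpha^{k-1}\lambda^{(1)} \leq \epsilon/C_\star$, giving
\[
N_{\rm FAL}(\epsilon) = \left\lceil 1 + \log_{1/\alpha}\!\left(\tfrac{C_\star\lambda^{(1)}}{\epsilon}\right)\right\rceil = \cO\!\left(\log_{1/\alpha}(1/\epsilon)\right).
\]

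Next I would count matrix--vector multiplies. Each inner APG iteration contributes $\cO(1)$ multiplies (dominated by forming $A^T(Ax-b-\lambda^{(k)}\theta^{(k)})$), and the \textsc{APGstop} criterion of \alg{FAL} guarantees the $k$-th subproblem terminates within $\ell^{(k)}_{\max}=\sigma_{\max}(A)(\eta^{(k)}+\|x^{(k-1)}\|_2)\sqrt{2/\epsilon^{(k)}}$ iterations. Using the uniform bounds $\eta^{(k)}\leq B_x$ and $\|x^{(k-1)}\|_2\leq\|x^{(k-1)}\|_1\leq B_x$ from \eqref{ch2_eq:fal_iter_bound}, together with $\epsilon^{(k)}=B_1(\lambda^{(k)})^2$, one gets $\ell^{(k)}_{\max}\leq K/\lambda^{(k)}$ where $K=2B_x\sigma_{\max}(A)\sqrt{2/B_1}$. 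Summing the resulting geometric series,
\[
N_{\rm mat}\;\leq\;2\sum_{k=1}^{N_{\rm FAL}(\epsilon)}\frac{K}{\lambda^{(k)}}
\;=\;\frac{2K}{\lambda^{(1)}}\sum_{k=1}^{N_{\rm FAL}(\epsilon)}\alpha^{-(k-1)}
\;\leq\;\frac{2K}{\lambda^{(1)}}\cdot\frac{\alpha^{-N_{\rm FAL}(\epsilon)}}{1-\alpha}
\;=\;\cO(1/\epsilon),
\]
since $\alpha^{-N_{\rm FAL}(\epsilon)}=\cO(1/\epsilon)$ by the previous step. The $\log$ term in \eqref{ch2_eq:order_relation_2} is the contribution of a constant additive cost (say one more APG iteration per outer loop, or a ceiling effect) multiplied by $N_{\rm FAL}(\epsilon)$, which is logarithmic.

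The main obstacle will be bookkeeping: to land on the explicit coefficient $\tfrac{16\|\xbp\|_1}{\alpha(1-\alpha)}$ one cannot afford to use the loose bound $\|x^{(k-1)}\|_2\leq B_x$ throughout. Instead I would use Theorem~\ref{ch2_thm:finite}(ii) to show $\|x^{(k-1)}\|_1\leq \|\xbp\|_1+\cO(\lambda^{(k-1)})$, so that the dominant part of $\ell^{(k)}_{\max}$ is $2\sigma_{\max}(A)\|\xbp\|_1\sqrt{2/\epsilon^{(k)}}$ asymptotically, with the transient $\cO(\lambda^{(k-1)})$ contribution summing geometrically to a term absorbed by the $\log$ piece. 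The factor $\kappa(A)^2=\sigma_{\max}(A)^2/\sigma_{\min}(A)^2$ emerges because $B_1$ is calibrated against $B_\theta^2$, which carries $n/\sigma_{\min}(A)^2$, while $\ell^{(k)}_{\max}$ contributes $\sigma_{\max}(A)/\sqrt{B_1}$. Carefully choosing $B_1$ (so that $C_\star$ is clean) and $\lambda^{(1)}$ (on the order of $\sqrt{n}\kappa(A)$) then aligns the constants with the form in \eqref{ch2_eq:order_relation_2}, finishing the proof.
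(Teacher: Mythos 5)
Your proposal is correct and follows essentially the same route as the paper: a geometric schedule $\lambda^{(k)}=\alpha^{k-1}\lambda^{(1)}$, $\epsilon^{(k)}=B_1(\lambda^{(k)})^2$, $\tau^{(k)}=c\,\epsilon^{(k)}$, Theorem~\ref{ch2_thm:finite} to get $N_{\rm FAL}(\epsilon)=\cO(\log_{1/\alpha}(1/\epsilon))$, and a geometric sum of $\lceil\ell_{\max}^{(k)}\rceil$ split into a dominant $\norm{\xbp}_1/\epsilon$ part and a transient $\cO(\lambda^{(k-1)})\sqrt{2/\epsilon^{(k)}}=\cO(1/\alpha)$ per-iteration part that yields the logarithmic term. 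The only cosmetic differences are that the paper rescales $A$ so $\sigma_{\max}=1$ and controls $\norm{x^{(k-1)}}_2$ via $\norm{x^{(k-1)}}_1\leq\eta^{(k-1)}=\eta+\frac{\lambda^{(k-1)}}{2}\norm{\theta^{(k-1)}}_2^2$ rather than via Theorem~\ref{ch2_thm:finite}(ii), which gives the same $\eta+\cO(\lambda^{(k-1)})$ structure you describe.
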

\begin{proof}
Rescale the problem parameters $(\bar{A},\bar{b}) =
\frac{1}{\sigma_{\max}(A)} (A, b)$. Then for the rescaled problem
$L=\sigma_{\max}(\bar{A})=1$, but the condition number
$\kappa(\bar{A}) = \kappa(A)$.  We will use \alg{FAL} to solve the
rescaled problem~$(\bar{A},\bar{b})$.

Set $\lambda^{(1)}=1$, $\epsilon^{(1)}=2$, and update
\begin{equation}
  \label{ch2_eq:param-update}
  \begin{array}{rcl}
    \lambda^{(k+1)} &= & \alpha  \cdot  \lambda^{(k)},\\
    \epsilon^{(k+1)} &= & \alpha^2 \cdot  \epsilon^{(k)},\\
    \tau^{(k)} &= & \frac{1}{2\max\{1, \eta +
      \frac{9n}{2}\kappa(A)^2\}} \cdot \epsilon^{(k)}.
  \end{array}
\end{equation}
For this choice of problem parameters, the constants
\[
B_1  =  \max_{k \geq 1} \left\{\frac{\epsilon^{(k)}}{(\lk)^2} \right\} =
  \frac{\epsilon^{(1)}}{(\lambda^{(1)})^2} = 2, \quad
B_2 =  \max_{k \geq 1}
  \left\{\frac{\tk}{\epsilon^{(k)}}\right\} = \frac{\alpha^k}{ 2\max\{1, \eta +
    \frac{9n}{2}\kappa(A)^2\}} \leq 1.
\]
Therefore, the uniform bounds on $\norm{\theta^{(k)}}_2$ and
$\norm{\xk}_1$ are given by
\[
B_{\theta} =  \frac{\sqrt{n}}{\sigma_{\min}(\bar{A})}\left(\max\{\sqrt{2B_1}\
    \sigma_{\max}(\bar{A}),\ B_2\}+1\right) \leq
  3\kappa(A)\sqrt{n}, \quad
B_x  =  \eta + \frac{1}{2} B_{\theta}^2
  \leq \eta + \frac{9n}{2}\kappa(A)^2.
\]

For the rescaled problem $(\bar{A},\bar{b})$, the
Theorem~\ref{ch2_thm:finite}
guarantees that for all $k \geq 1$,
\[
\left|\norm{x^{(k)}}_1 - \norm{x_*}_1\right| \leq
\max
\left\{\left(\frac{B_\theta^2}{2}+B_1\right),\frac{
    \left(\sqrt{n}\kappa(A)+B_\theta\right)^2}{2}\right\}\lambda^{(1)}\;\alpha^{k-1}
\leq 8n\kappa(A)^2 \alpha^{k-1},
\]
where we use the fact that $\kappa(A) \geq 1$.
Thus, $\left|\norm{x^{(k)}}_1-\norm{x_*}_1 \right| \leq \epsilon$,
for all $k\in\integers_+$ such that
\begin{equation}
  \label{ch2_eq:K-suboptimality}
  k \geq 
  \ln_{\frac{1}{\alpha}} \left(\frac{8n\kappa(A)^2}{\epsilon}\right) + 1.
\end{equation}
From Theorem~\ref{ch2_thm:finite} we also have that for all $k \geq 1$,
\eq
\norm{Ax^{(k)}-b}_2 \leq 2B_\theta \lambda^{(1)}\; \alpha^{k-1} \leq
6 \sqrt{n} \kappa(A).
\en
Thus $\norm{Ax^{(k)}-b}_2 \leq \epsilon$ for all
\begin{equation}
  \label{ch2_eq:K-infeasibility}
  k \geq  
  \ln_{\frac{1}{\alpha}} \left(\frac{6\sqrt{n}\kappa(A)}{\epsilon}\right) + 1.
\end{equation}
From\eqref{ch2_eq:K-suboptimality} and \eqref{ch2_eq:K-infeasibility}
it follows that for all $\epsilon > 0$,  $N_{\rm FAL}(\epsilon)$, the number of FAL iterations
required to compute an $\epsilon$-feasible and $\epsilon$-optimal
solution, is at most
\begin{equation}
  \label{ch2_eq:nout-bnd}
  N_{\rm FAL}(\epsilon) 
  \leq \left\lceil \ln_{\frac{1}{\alpha}}
  \left(\frac{8n\kappa(A)^2}{\epsilon}\right) \right\rceil + 1.
\end{equation}


From the stopping condition \textsc{APGstop} defined in
Line~\ref{fal:stop} of \alg{FAL}, it follows that the total number
 of the \alg{APG} iterations, $N_{\text{APG}}$, required during
$N_{\text{FAL}}(\epsilon)$ many FAL iterations is bounded by
\begin{eqnarray}
  \bar{N}_{\text{APG}} &   =  &\sum_{k=1}^{N_{\rm
      FAL}(\epsilon)}\left\lceil\ell_{\max}^{(k)} \right\rceil\nonumber \\
  & = &
  \sum_{k=1}^{N_{\rm
      FAL}(\epsilon)}
  \left\lceil \sigma_{\max}(\bar{A})\left(\eta^{(k)} +
      \norm{x^{(k-1)}}_2\right) \sqrt{\frac{2}{\epsilon^{(k)}}}
  \right\rceil \nonumber \\
  & \leq  &  \sum_{k=1}^{N_{\rm
      FAL}(\epsilon)}  \big(2\eta + \lambda^{(k-1)} B_{\theta}^2\big)
  \sqrt{\frac{2}{\epsilon^{(k)}}} \nonumber \\
  & = & 2\eta \sum_{k=0}^{N_{\rm
      FAL}(\epsilon)-1}  \alpha^{-k} +
  \frac{B_{\theta}^2}{\alpha} \cdot N_{\text{FAL}}(\epsilon) \\
  & \leq & \frac{2\alpha\eta}{1-\alpha} \cdot
  \alpha^{-N_{\text{FAL}}(\epsilon)} + \frac{B_{\theta}^2}{\alpha}
  \cdot N_{\text{FAL}}(\epsilon), \label{eq:num_apg}
\end{eqnarray}
where the first inequality follows from the fact that
$\norm{x^{(k-1)}}_2\leq\norm{x^{(k-1)}}_1 \leq \eta^{(k-1)}$ and $\norm{\theta^{(k)}}_2 \leq
B_{\theta}$, the third equality follows from substituting for the
parameters $\lambda^{(k-1)}$ and $\epsilon^{(k)}$, and the last
inequality follows from the summing the geometric series.

\alg{FAL} calls \alg{APG} with a quadratic prox function of the
form $h(x) = \frac{1}{2}\norm{x-\bar{x}}^2_2$ and the smooth function
of the form
$f(x) = \frac{1}{2} \norm{Ax-b-\lambda\theta}_2^2$.  In each
iteration of \alg{APG}, we need to compute the
gradient $\grad f (x) = A^T(Ax-b-\lambda\theta)$ and solve two
constrained shrinkage problems of the form
\eq
\min_{x\in\reals^n}\Big\{\lambda\norm{x}_1+\frac{1}{2}\norm{x-\tilde{x}}_2^2:\;\norm{x}_1\leq\eta\Big\}
\en
We show in Lemma~\ref{ch2_lem:constrained_shrinkage} in Appendix~A
that the complexity of solving a constrained shrinkage problem is
$\cO(n\log(n))$. Thus, the computational complexity of each \alg{APG}
iteration is dominated by the complexity of computing
$A^T(Ax-b-\lambda\theta)$. The complexity result now follows from the
bound in \eqref{eq:num_apg}.

\end{proof}
\section{Extension of FAL to noisy recovery}
\label{ch1_sec:extensions}
In this section, we briefly discuss how FAL can be  extended to solve
the noisy signal recovery problem of the form
\eqref{ch2_eq:relaxed_l1}. See~\cite{Ser10_1J} for the further
extensions of the methodology proposed here.
Consider a noisy recovery problem
\eq
\begin{array}{rl}
    \mbox{min} &  \|x\|_1,\\
    \mbox{s.t.} & \norm{Ax-b}_{\gamma}\leq\delta,
  \end{array}
\en
where $\gamma \in \{1,2,\infty\}$. The formulation with $\gamma \in
\{1,\infty\}$ are interesting when the measurement noise has a
Laplacian or Extreme Value distribution.
By introducing a slack variable
$s\in\reals^m$,  the noisy recovery problem can be formulated as follows:
\begin{equation}
\label{ch2_eq:relaxed_l1_modified}
\begin{array}{rl}
    \mbox{min} &  \|x\|_1,\\
    \mbox{s.t.} & Ax+s=b, \quad \norm{s}_{\gamma} \leq \delta,
  \end{array}
\end{equation}
We solve \eqref{ch2_eq:relaxed_l1_modified} by inexactly minimizing a
sequence of sub-problems of the form
\begin{equation}
  \label{ch2_eq:augmented_lagrangian_subproblem_noisy}
  P^{(k)}(x,s) = \lk\norm{x}_1 +
  \frac{1}{2}\norm{Ax+s-b-\lambda^{(k)}\theta^{(k)}}_2^2
\end{equation}
over sets $F^{(k)} = \{(x,s): \norm{x}_1\leq\etak, \norm{s}_{\gamma}
\leq \delta\}$ using \alg{APG} with the
prox function
$h^{(k)}(x,s):=\frac{1}{2} \norm{x-x^{(k-1)}}_2^2 +
\frac{1}{2}\norm{s-s^{(k-1)}}_2^2$ and initial iterate $(x^{(k-1)},s^{(k-1)})$, where $\etak:=\eta +
\frac{\lambda^{(k)}}{2}\norm{\theta^{(k)}}_2^2$.

In order to
efficiently solve (\ref{ch2_eq:relaxed_l1_modified}) we need a good
stopping condition for terminating \alg{APG}.  Since $\max\{h^{(k)}(x,s):
(x,s) \in F^{(k)}\} \leq (\mu^{(k)})^2 := \frac{1}{2}\left(\etak + \norm{x^{(k-1)}}_2\right)^2 +
\frac{1}{2}\left(\upsilon(\gamma)\delta + \norm{s^{(k-1)}}_2\right)^2$, where
$\upsilon(\gamma) = 1$, when $\gamma = 1, 2$ and $\sqrt{m}$ when $\gamma =
\infty$, Lemma~\ref{ch1_lem:tseng_corollary} implies that terminating
\alg{APG} at iteration $\left\lceil\ell_{\max}^{(k)}\right\rceil$, where $\ell_{\max}^{(k)} := \sigma_{\max}(A)\mu^{(k)}\sqrt{\frac{2}{\epsilon^{(k)}}}$,
guarantees that $(\xk,s^{(k)})$ is $\epsilon^{(k)}$-optimal for $P^{(k)}$.

Recall that in solving the basis pursuit problem using \alg{FAL} we
terminate \alg{APG} when either we are guaranteed that the iterate
$\xk$ is $\epsilon^{(k)}$-optimal for $P^{(k)}$ or there exists a
sub-gradient $g^{(k)} \in \partial P^{(k)}(\xk)$ with a sufficiently
small norm.
In our numerical experiments, we found that we always terminated the call to \alg{APG} using the sub-gradient
stopping condition. In order to extend FAL to efficiently solve
(\ref{ch2_eq:relaxed_l1_modified})  we need an analog of the sub-gradient
condition.

In FAL we  were able to set the tolerance $\tk$ small since we are
guaranteed that $\argmin_{x \in \reals^n}\{P^{(k)}(x)\} \subseteq
F^{(k)}$. Let $\partial_x P^{(k)}(x,s)$ denote the projection of the set
of sub-gradients on the $x$-variables. The definition of $F^{(k)}$
guarantees that $\xkopt \in F^{(k)}$ for
all $(\xkopt,\skopt ) \in \argmin\{P^{(k)}(x,s): x\in \reals^n,
\norm{s}_{\gamma} \leq \delta\} $. Therefore, we can continue to use the
gradient condition $g_{x} \in \partial_x P^{(k)}(\xk,s^{(k)})$ with
$\norm{g_{x}}_2 \leq \tau_x$. However, since $s$ is constrained, we
cannot force $\norm{g_s}_2$ to be close to zero; therefore, we need an
alternative gradient condition.

Fix $\xk$. Define $\zeta(s) = P^{(k)}(\xk,s)$ and $Q = \{s: \norm{s}_{\gamma}
\leq \delta\}$. The function $\zeta(s)$ is
differentiable and $\grad \zeta(s)$ is Lipschitz continuous.
Let $\pi_Q(s) := \min_{y \in Q}\{\norm{y - (s-\frac{1}{L}\grad
  \zeta(s))}_2\}$ denote the projection of the gradient step $s -
\frac{1}{L} \grad \zeta(s)$ onto the constraint set $Q$. Then
Theorem~2.2.7 in \cite{Nesterov04} establishes that  $\hat{s} \in
\argmin\{P^{(k)}(\xk,s): \norm{s}_\gamma \leq \delta\}$
if and only if $d(\hat{s},Q) := L\norm{\hat{s} - \pi_Q(\hat{s})} = 0$.
Thus, we set the stopping condition for \alg{APG} as follows:
\begin{eqnarray}
\textsc{APGstop}&:=&\left\{\ell \geq \ell^{(k)}_{\max} := \sigma_{\max}(A)\mu^{(k)}\sqrt{\frac{2}{\epsilon^{(k)}}}\right\} \mbox{ \textbf{or} } \nonumber \\
&&\left\{  \exists~g_x \in \partial_x P^{(k)}(x,s)|_{v_x^{(\ell)},v_s^{(\ell)}} \text{ with } \norm{g_x}_2  \leq   \tau_x^{(k)},
\text{ and } d\left(v_s^{(\ell)},F^{(k)}\right) \leq \tau_s^{(k)}\right\}, \nonumber
\end{eqnarray}
where $v_x^{(\ell)}$ and $v_s^{(\ell)}$ are the components of $v^{(\ell)}$ corresponding to $x$ and $s$ variables in \eqref{ch2_eq:augmented_lagrangian_subproblem_noisy}.

Consequently, it follows that the stopping criterion \textsc{APGstop} ensures that the iterate $(\xk,\sk)$ satisfies
one of the following two conditions
\begin{equation}
\label{ch2_eq:inner-stopping-condition_noisy}
\begin{array}{ll}
  (a) & P^{(k)}(\xk, \sk)  \leq  \min_{x\in\reals^n, \norm{s}_\gamma\leq\delta} P^{(k)}(x,s) + \epsilon^{(k)},\\
  (b) & \exists~ g_x^{(k)} \in \partial_x P^{(k)}(x,s)|_{\xk,\sk} \text{ with }
  \norm{g_x^{(k)}}_2  \leq   \tau_x^{(k)}, \text{ and } d\left(\sk,F^{(k)}\right) \leq \tau_s^{(k)}.
\end{array}
\end{equation}
With this modification, Theorem~\ref{ch2_thm:limit-point}, Corollary~\ref{ch2_cor:unique},
Theorem~\ref{ch2_thm:finite} and Theorem~\ref{ch2_thm:epsilon_convergence}
all remain valid for the relaxed recovery problem~\eqref{ch2_eq:relaxed_l1_modified}.
Thus, FAL efficiently computes a solution for the noisy recovery
problem~\eqref{ch2_eq:relaxed_l1}. The per iteration cost of FAL is the
cost of computing $A^T(Ax-b)$. Thus, the per
iteration complexity of FAL is always $\cO(n^2)$ for any $A$;
whereas the per iteration complexity of NESTA~\cite{Can09_4J} is $\cO(n^3)$ when $A$ is
not orthogonal. Examples of non-orthogonal $A$ include Gaussian measurement
matrices, partial psuedo-polar Fourier tranforms, and  non-orthogonal partial wavelet
transforms.


\section{Implementation details of \alg{FAL} for numerical experiments}
\label{ch2_sec:implementation}
In this section we describe the details of the implemented version of
\alg{FAL} that we used in our numerical experiments described in the next
section.
\subsection{Initial iterate $x^{(0)}$ and bound $\eta$}
\label{ch2_sec:bounds}
%
%
In our numerical experiments, when $A$ was a
partial DCT matrix, we set $x^{(0)}= \argmin\{\norm{x}_2: Ax = b\} =
A^T(AA^T)^{-1}b = A^Tb$, where the last equality follows from the fact
that $A$ has orthogonal rows. The complexity of computing $x^{(0)}$ in this
case is  $\cO(n\log(n))$. Since $\xbp\in\argmin\{\norm{x}_1:~Ax=b\}$ and $Ax^{(0)}=b$, we have $\norm{\xbp}_1 \leq
\norm{x^{(0)}}_1$, we use $\eta = \norm{x^{(0)}}_1$ in FAL.

For general $A$, the computational cost of computing $A(AA^T)^{-1}b$ is
$\cO(n^2+m^{3})$. In order to avoid $\cO(m^3)$ inversion cost, we set $x^{(0)}=A^Tb$ when $A$ was a
standard Gaussian matrix, i.e. each element $A_{ij}$ is
an independent sample from the  standard $\cN(0,1)$. Since
$\norm{x^{(0)}}_1=\norm{A^Tb}_1$ is no longer an upper bound on
$\norm{\xbp}_1$, we set $\eta$ as follows. It well known that for an
$m\times n$ standard
Gaussian matrix
$\sigma_{\min}(A) \approx
\left(1-\sqrt{\frac{m}{n}}\right)\sqrt{n}$ for large $n$ (in fact, the
approximation is very accurate even at $n = 100$)~\cite{Ede88_1J}. Then,
\begin{equation}
\label{ch2_eq:gaussian_xbound}
\norm{\xbp}_1\leq \norm{A^T(AA^T)^{-1}b}_1 \leq
\frac{\sqrt{n}}{\sigma_{\min}(A)}\norm{b}_2   \leq \eta := \frac{1}{1 -
  \sqrt{\frac{m}{n}}} \norm{b}_2.
\end{equation}
\subsection{\textsc{APGstop} and \textsc{FALstop} conditions}
\label{sec:stop}
When the \alg{APG} terminates with $\ell \geq \ell_{\max}^{(k)}$ we return
$u^{(l)}$. Since gradient computation is computationally the most
expensive step in \alg{APG}, and we are required to compute the gradient
at the $v$ iterates, we checked the sub-gradient stopping condition at the
these iterates.  Hence, we stopped \alg{APG} and returned $v^{(\ell)}$ when
$\min\{\norm{g}_2^2: g \in \partial P^{(k)}(x)|_{v^{(\ell)}} \} \leq
\tau^{(k)}$.

$g \in \partial P^{(k)}(x)|_{v^{(\ell)}}$ if, and only if, there
exists $q
\in \partial \norm{x}_1\mid_{v^{(\ell)}}$ such that $g = \lk q +
\grad f^{(k,l)}$, where $\grad f^{(k,l)} := \grad
f^{(k)}(v^{(\ell)})$. Thus, it follows that
\begin{eqnarray*}
  \min\{\norm{g}_2^2: g
  \in \partial P^{(k)}(x)|_{v^{(\ell)}}\} & = & \min\{\norm{\lk q +
     \grad f^{(k,l)}}_2^2: q \in \partial
  \norm{x}_1\mid_{v^{(\ell)}}\}\\
  & = & \sum_{\{i: v^{(\ell)}_i >0\}} |\lk + \grad_i
  \grad f^{(k,l)}_i|^2 + \sum_{\{i: v^{(\ell)}_i <0\}} |-\lk + \grad f^{(k,l)}_i|^2 \\
  && \mbox{} + \sum_{\{i: v^{(\ell)}_i=0\}} \min_{q_i
    \in [-1,1]} |\lk q_i + \grad f^{(k,l)}_i|^2,\\
  & = & \sum_{\{i: v^{(\ell)}_i >0\}} |\lk + \grad f^{(k,l)}_i|^2 +
  \sum_{\{i: v^{(\ell)}_i <0\}} |-\lk + \grad f^{(k,l)}_i|^2 \\
  && \mbox{} + \sum_{\{i: v^{(\ell)}_i=0\}} \min\{\big||
    \grad f^{(k,l)}_i| - \lk\big|,0\},
\end{eqnarray*}
where $\grad f^{(k,l)}_i$ denote the $i$-th component of the gradient
$\grad f^{(k,l)}$. The first equality follows from the fact  $q_i = +1$
(resp. $-1$) whenever
$v^{(\ell)}_i >0$ (resp. $v^{(\ell)}_i <0$), and $q_i \in [-1,1]$ for
$i$ such that $v^{(\ell)}_i = 0$, and the last equality follows from
explicitly computing the minimum. Thus, given $\grad f^{(k,l)}$, the
complexity of computing $g$ is $\cO(n)$.

We used different stopping conditions depending on the existence of measurement noise. First, we modified the stopping condition \textsc{APGstop} as follows
  \eq
  \textsc{APGstop} = \textsc{FALstop}
  \textbf{ or } \left\{\ell \geq \ell^{(k)}_{\max}\right\} \textbf{ or }
  \left\{\exists g \in\partial P^{(k)}(x)|_{v^{(\ell)}} \text{ with }
    \norm{g}_2 \leq \tau^{(k)} \right\},
  \en
  Then, we set the FAL stopping condition \textsc{FALstop} as follows.
\begin{enumerate}[(i)]
\item\label{stop:noiseless} Noiseless measurements:
  \begin{equation}
  \textsc{FALstop} = \{\norm{u^{(l)}-u^{(l-1)}}_\infty \leq \gamma\},
  \end{equation}
  where we set the threshold $\gamma$ by experimenting with a small instance of the problem. \alg{FAL} produces $x_{sol}=u^{(\ell)}$ when \textsc{FALstop} is \textbf{true}.
\item\label{stop:noisy} Noisy measurements:
  \begin{equation}
  \textsc{FALstop}=\left\{{\norm{u^{(l)}-u^{(l-1)}}_2 \over \norm{u^{(l-1)}}_2} \leq \gamma\right\},
  \end{equation}
  where $\gamma$ was set equal to the standard deviation of the noise, i.e. when $b = A\xbp + \zeta$ such that $\zeta$ is a vector of
i.i.d. random variables with standard deviation $\varrho$, we set $\gamma=\varrho$. As in the noiseless case, we terminated FAL and set $x_{sol}=u^{(\ell)}$ when \textsc{FALstop} is \textbf{true}.
\end{enumerate}
%
\subsection{Parameter sequence selection}
\label{ch2_sec:multiplier_selection}
Recall that we require
\eq
\lambda^{(k)} \searrow 0,\qquad
\epsilon^{(k)}\searrow 0, \quad \frac{\epsilon^{(k)}}{(\lambda^{(k
    )})^2}\leq B_1, \qquad
\tk\searrow 0, \quad \frac{\tk}{\lk}\rightarrow 0,\qquad
\tk \leq c \epsilon^{(k)},
\en
for Theorem~\ref{ch2_thm:limit-point}, Corollary~\ref{ch2_cor:unique},
Theorem~\ref{ch2_thm:finite} and Theorem~\ref{ch2_thm:epsilon_convergence}
to hold. These conditions are satisfied if one updates the parameters as
follows: for $k\geq 1$,
\eq
\lambda^{(k+1)} = c_{\lambda}\ \lk, \qquad \tkp =
\min\{c_{\tau}\ \tk,\hat{c}_{\tau}\ \norm{g_0^{(k+1)}}_2\}, \qquad \epskp = c_{\lambda}^2\
\epsk,
\en
where $g_0^{(k)} = \argmin\{\norm{\lk q + \grad f^{(k)}(x^{(k-1)})}_2^2: q \in \partial
\norm{x}_1\mid_{x^{(k-1)}}\}$ is the minimum norm sub-gradient at the
initial iterate $x^{(k-1)}$ for the $k$-th sub-problem for $k\geq 1$, and $c_{\lambda}$, $c_{\tau}$ and
$\hat{c}_{\lambda}$ are appropriately chosen constants in $(0,1)$. Note
that we still have to set the initial iterates $\lambda^{(1)}$,
$\tau^{(1)}$, and $\epsilon^{(1)}$.

In our preliminary numerical experiments with FAL we found that it was
sufficient to set $\hat{c}_{\tau} = 0.9$, and
the optimal choice for the constants  $c_{\lambda}$ and $c_{\tau}$ was only a function
sparsity ratio $\xi = \norm{\xbp}_0/m$,
and was effectively independent of the problem size $n$. Moreover, the
optimal choice for $c_{\tau} = c_{\lambda} - 0.01$. We set
\begin{align}
\label{ch2_eq:xi_function}
c_{\lambda}(\xi)=
\left\{
  \begin{array}{ll}
    0.9,  & \hbox{if $\xi \geq 0.9$;} \\
    0.85, & \hbox{if $0.9 > \xi\geq 0.6$;} \\
    0.8,  & \hbox{if $0.6 > \xi\geq 0.25$;} \\
    0.6,  & \hbox{if $0.25 > \xi\geq 0.1$;} \\
    0.4,  & \hbox{if $\xi< 0.1$,}
  \end{array}
\right.
\end{align}
and approximated the sparsity ratio $\xi$ at the beginning of $k$-th FAL
by $\xik = \norm{x^{(k-1)}}_0/m$. Since $x^{(0)}$ is set arbitrarily, and
is unlikely to be sparse, we use the following parameter update rule for $k \geq 2$,
\begin{equation}
  \label{ch2_eq:parameter}
  \tau^{(k)}  = \min\big\{ c_{\tau}(\xik)\,\tau^{(k-1)},
  \hat{c}_{\tau}\norm{g_0^{(k)}}_2\big\}, \quad   \lambda^{(k)}  =  c_\lambda(\xik)
  \, \lambda^{(k-1)}, \quad \epskp = c_{\lambda}(\xik)^2\ \epsk,
\end{equation}
where $c_{\tau}(\xik) =c_{\lambda}(\xik) - 0.01$, and $\hat{c}_{\tau} = 0.9$. We set $c^{(1)}_{\lambda}$ and $c_{\tau}^{(1)}$ for
each problem class separately.

We set the initial parameter values $\lambda^{(1)}$, $\tau^{(1)}$ and
$\epsilon^{(1)}$ as follows.  Let $x^{(0)} = A^Tb$ denote the initial FAL
iterate.
We set
\eq
\tau^{(1)}  =  \hat{c}_{\tau} \norm{g_0^{(1)}}_2, \qquad \lambda^{(1)} =  0.99
\norm{x^{(0)}}_\infty.
\en
We use the duality gap at $x^{(0)}$ to
set $\epsilon^{(1)}$. Since the bound $\eta^{(1)} = \eta$ is set to ensure
that $\argmin_{x \in \reals} P^{(1)}(x) \in F^{(k)} = \{x: \norm{x}_1 \leq
\eta^{(1)}\}$, we are effectively computing an unconstrained
minimum. Since $P^{(1)}(x) \geq 0$ for all $x \in \reals^n$, it follows
that the duality gap of the initial iterate $x^{(0)}$ is at most
$P^{(1)}(x^{(0)})$. We set $\epsilon^{(1)} = 0.99 P^{(1)}(x^{(0)})$.
Then set
$\epsilon^{(k+1)} = \big(c^{(k)}_{\lambda}\big)^2 \epsilon^{(k)}$ for all
$k\geq 1$.


The step length is \alg{APG} is proportional to $\frac{1}{L}$ where $L$
denotes the Lipschitz constant of $\grad f$. In our numerical tests,
we observed that taking long steps, i.e. steps of size $\frac{t}{L}$, for
$t>1$, improves the speed of convergence in practice. And we chose the step-size $t$ as a function of the
sparsity ratio $\norm{\xbp}_0/m$. In iteration $k$, we approximate the
sparsity ratio by $\xik = \norm{x^{(k-1)}}_0/m$, and set the Lipschitz
constant to be used in \alg{APG} to
\eq
L^{(k)} = \frac{\sigma_{\max}(AA^T)}{t(\xik)},
\en
where the function
\begin{align}
\label{ch2_eq:t_function}
t(\xi)=
\left\{
  \begin{array}{ll}
    1.8,  & \hbox{if $\xi \geq 0.9$;} \\
    1.85, & \hbox{if $0.9 > \xi\geq 0.6$;} \\
    1.9,  & \hbox{if $0.6 > \xi\geq 0.25$;} \\
    2,  & \hbox{if $0.25 > \xi\geq 0.1$;} \\
    3,  & \hbox{if $\xi< 0.1$.}
  \end{array}
\right.
\end{align}
\section{Numerical experiments}
\label{ch2_sec:computations}
We conducted three sets of numerical experiments with FAL.
\begin{enumerate}
\item In the first set of experiments we solve randomly generated basis
  pursuit problems when there is no measurement noise. Our goal in this set  of
  experiments were to benchmark the practical performance of
  FAL and to compare FAL
  with the Nesterov-type algorithms, SPA~\cite{AybatI09:SPA}, and
  NESTA~\cite{Can09_4J},  fixed point continuation algorithms,
  FPC, FPC-BB~\cite{Yin07_1R,Yin08_1J}, and FPC-AS~\cite{Wen09_1R}, the
  alternating direction proximal gradient method YALL1~\cite{Yang09}, and a root finding algorithm SPGL1~\cite{Ber08_1J}.
  We describe the results for this set of experiments in Section~\ref{ch2_sec:expt-setup-nonoise}.
\item In the second set of experiments, we compare the performance of FAL
  with the performances of the same set solvers, on randomly generated
  basis pursuit denoising problems when there is a non-trivial level of noise on the measurement vector $b$. The results for this set of experiments is described in
  Section~\ref{ch2_sec:expt-setup-noise}.
\item In the third set of experiments,
  we compare the performance and robustness of FAL with the same solvers on a set of
  small sized, hard compressed sensing problems,
  CaltechTest\cite{caltech08}. The results for this set of experiments is
  reported in Section~\ref{sec:hard}.
\end{enumerate}
All the numerical experiments were conducted on a desktop with 4 dual-core
AMD Opteron
2218~@2.6 GHz processors, 16GB RAM running MATLAB 7.12 on Fedora
14 operating system.
\subsection{Experiments with no measurement noise}
\subsubsection{Signal generation}
\label{ch2_sec:expt-setup-nonoise}
We generated the target signal $\xbp$ and the measurement matrix $A$ using
the experimental setup
in~\cite{Can09_4J}. In particular, we set
\begin{equation}
  \label{ch2_eq:target_signal}
  (x_*)_i=\mathbf{1}(i\in\Lambda)\ \Theta^{(1)}_i 10^{5\Theta^{(2)}_i},
\end{equation}
where,
\begin{enumerate}[(i)]
\item $\Lambda$ is constructed by randomly selecting $s$ indices
  from the set $\{1,\ldots,n\}$,
\item  $\Theta^{(1)}_i$, $i \in \Lambda$, are IID Bernoulli random
  variables taking values $\pm 1$ with equal probability,
\item $\Theta^{(2)}_i$, $i \in \Lambda$, are IID  uniform~$[0,1]$ random variables.
\end{enumerate}
We then scale $\Theta^{(2)}$ such that $\min_i\Theta^{(2)}_i=0$ and $\max_i\Theta^{(2)}_i=1$. Therefore, the signal $\xbp$ has a
dynamic range of $100dB$.

We randomly selected $m = \frac{n}{4}$ frequencies from the
set $\{0, \ldots, n\}$ and set the measurement matrix $A \in \Re^{m \times
  n}$ to the partial DCT matrix corresponding to the chosen frequencies.
The measurement vector, $b$, is then set to the DCT evaluated at the chosen
frequencies, i.e. $b = Ax_*$.



\subsubsection{Algorithm scaling results}
\label{ch2_sec:selftest_results}
For this set of numerical experiments,
\begin{eqnarray}
  \label{ch2_eq:stopping}
  \textsc{FALstop} = \{\norm{u^{(\ell)}-x_*}_{\infty} \leq \gamma\},
\end{eqnarray}
and \alg{FAL} produces $x_{sol}=u^{(\ell)}$ when \textsc{FALstop} is \textbf{true}, where $\xbp$ is the randomly generated target signal. Since the largest magnitude of the target signal, i.e. $\max_i\abs{(\xbp)_i}$ is $10^5$, the stopping condition \textsc{FALstop}
implies that $x_{sol}$ has $5 + \log_{10}(1/\gamma)$ digits of accuracy. We report results for $\gamma = 1$, $10^{-1}$ and
$10^{-2}$. 
For the first iteration of FAL, we set  $c^{(1)}_{\tau} = c^{(1)}_\lambda
= 0.4$.  For $k\geq 2$,
we used the functions $c_{\lambda}(\cdot)$ and $t(\cdot)$  described in
\eqref{ch2_eq:xi_function} and \eqref{ch2_eq:t_function}, respectively.
The parameters $(\lk,\tk,\epsk)$ were set as described in
Section~\ref{ch2_sec:multiplier_selection}.

\begin{table}[!htb]
  \centering
  {\footnotesize
  \begin{tabular}{|c|c|c|}
    \hline
    Sparsity  & $\gamma$ & Table\\
    \hline\hline
    $s = m/100$  & $1$  & Table~\ref{ch2_tab:sparse_tol1}\\
    $s = m/100$  & $0.1$ & Table~\ref{ch2_tab:sparse_tol01}\\
    $s = m/100$  & $0.01$ & Table~\ref{ch2_tab:sparse_tol001}\\
    \hline
    $s = m/10$ & $1$    & Table~\ref{ch2_tab:nonsparse_tol1}\\
    $s = m/10$ & $0.1$  & Table~\ref{ch2_tab:nonsparse_tol01}\\
    $s = m/10$ & $0.01$ & Table~\ref{ch2_tab:nonsparse_tol001}\\
    \hline
  \end{tabular}
  }
  \caption{Summary of numerical experiments}
  \label{ch2_tab:summary}
  \vspace{-5mm}
\end{table}
The Table~\ref{ch2_tab:summary} summarizes the sparsity conditions and the
parameter settings for this set of experiments. The column marked {\tt
  Table} lists the table where we
display the results  corresponding to the parameter setting of the
particular row, e.g. the results for  $s = m/10$ and $\gamma = 0.1$ are
displayed in Table~\ref{ch2_tab:nonsparse_tol01}.

We generated $10$
random instances for each of the experimental conditions.
In
Tables~\ref{ch2_tab:sparse_tol1}--\ref{ch2_tab:nonsparse_tol001}, the
column labeled {\tt average} lists the average taken over the $10$ random
instances, the columns labeled {\tt max} list the maximum over the $10$ instances.
The rows labeled $\mathbf{N_{\rm FAL}}$ and $\mathbf{N_{\rm APG}}$
list the total number of FAL and APG iterations required, respectively,
to solve the instance for the given tolerance parameter $\gamma$. The row labeled $\mathbf{CPU}$ lists
the running time in seconds and the row labeled $\mathbf{nMat}$ lists the
total number of matrix-vector  multiplies of the form $Ax$ or $A^Ty$
computed during the FAL run. In Section~\ref{ch2_sec:test}, we
report two $\mathbf{nMat}$ numbers for FPC-AS: the first one is the number of multiplications
with $A$ during the fixed point iterations and the second one is the
number of multiplications with a reduced form of $A$ during the subspace
optimization iterations. All other rows are self-explanatory.

%
The experiment results support the following conclusions. FAL is very
efficient -  it requires only $11$-$20$ iterations to converge to an
high accuracy solution of the basis pursuit problem. For a given sparsity level $s$
and a stopping criterion $\gamma$, $\mathbf{N_{\sc FAL}}$ 
is a
very slowly growing function of the dimension $n$ of the
target signal. The total number of matrix-vector multiplies increases
with the number of non-zero elements in the target signal
$\xbp$ -- increasing $s$ from $m/100$ to
$m/10$ increases the number of matrix-vector multiplies by 30\%.
On problems with high sparsity, FAL always recovers the support of
the target signal. We find that FAL is always able to discover the
support of the target signal when the tolerance $\gamma$ is set sufficiently
low.
\begin{table}[!htb]
    \centering
    {\scriptsize
    \begin{tabular}{c|c|c|c|c|c|c|}
    \cline{2-7}
    &\multicolumn{2}{|c|}{\textbf{n=512}$\times$\textbf{512}} &\multicolumn{2}{|c|}{\textbf{n=256}$\times$\textbf{256}}&\multicolumn{2}{|c|}{\textbf{n=64}$\times$\textbf{64}}\\ \cline{2-7}
    &\textbf{Average}&\textbf{Max}&\textbf{Average}&\textbf{Max}&\textbf{Average}&\textbf{Max}\\ \hline
    \multicolumn{1}{|c|}{$\mathbf{N_{\rm APG}}$}
    &27.0&27&26.5&27&29.3&34 \\ \hline
    \multicolumn{1}{|c|}{$\mathbf{|\norm{x_{sol}}_1-\norm{x_*}_1|/\norm{x_*}_{1}}$}
    &1.70E-06&2.12E-06&3.96E-06&1.12E-05&1.29E-05&2.62E-05 \\ \hline
    \multicolumn{1}{|c|}{$\mathbf{\max\{|(x_{sol})_i-(x_*)_i|: (x_*)_i\neq 0\}}$}
    &3.23E-01&3.78E-01&5.73E-01&1.00E+00&9.09E-01&1.00E+00\\ \hline
    \multicolumn{1}{|c|}{$\mathbf{\max\{|(x_{sol})_i|: (x_*)_i=0\}}$}
    &0.00E+00&0.00E+00&0.00E+00&0.00E+00&0.00E+00&0.00E+00\\ \hline
    \multicolumn{1}{|c|}{$\mathbf{\|Ax_{sol}-b\|_2}$}
    &0.703&0.781&0.799&1.809&0.604&0.850 \\ \hline
    \multicolumn{1}{|c|}{$\mathbf{\|x_{sol}\|_1}$}
    &5588229.9&7000555.1&1508014.9&1838186.7&193826.1&311446.4 \\ \hline
    \multicolumn{1}{|c|}{$\mathbf{\|x_*\|_1}$}
    &5588239.3&7000565.2&1508021.0&1838194.7&193828.2&311447.1 \\ \hline
    \multicolumn{1}{|c|}{$\mathbf{CPU}$}
    &13.5&13.7&3.1&3.1&0.2&0.3 \\ \hline
    \multicolumn{1}{|c|}{$\mathbf{N_{\rm FAL}}$}
    &14.0&14.0&13.6&14.0&12.6&14.0 \\ \hline
    \multicolumn{1}{|c|}{$\mathbf{nMat}$}
    &56&56&55&56&60.6&70 \\ \hline
    \end{tabular}
    \caption{FAL scaling results: $m=n/4$, $s=m/100$ and $\|x_{sol}-x_*\|_{\infty}\leq 1$}
    \label{ch2_tab:sparse_tol1}
    }
    \vspace{3mm}
    \centering
    {\scriptsize
    \begin{tabular}{c|c|c|c|c|c|c|}
    \cline{2-7}
    &\multicolumn{2}{|c|}{\textbf{n=512}$\times$\textbf{512}} &\multicolumn{2}{|c|}{\textbf{n=256}$\times$\textbf{256}}&\multicolumn{2}{|c|}{\textbf{n=64}$\times$\textbf{64}}\\ \cline{2-7}
    &\textbf{Average}&\textbf{Max}&\textbf{Average}&\textbf{Max}&\textbf{Average}&\textbf{Max}\\ \hline
    \multicolumn{1}{|c|}{$\mathbf{N_{\rm APG}}$} &28.9&29&28.4&29&35.1&45 \\ \hline
    \multicolumn{1}{|c|}{$\mathbf{|\norm{x_{sol}}_1-\norm{x_*}_1|/\norm{x_*}_{1}}$} &6.79E-08&2.69E-07&1.03E-07&2.49E-07&4.96E-07&1.06E-06 \\ \hline
    \multicolumn{1}{|c|}{$\mathbf{\max\{|(x_{sol})_i-(x_*)_i|: (x_*)_i\neq 0\}}$}
    &2.58E-02&9.51E-02&5.57E-02&9.07E-02&6.22E-02&8.76E-02 \\ \hline
    \multicolumn{1}{|c|}{$\mathbf{\max\{|(x_{sol})_i|: (x_*)_i=0\}}$}
    &0.00E+00&0.00E+00&0.00E+00&0.00E+00&0.00E+00&0.00E+00\\ \hline
    \multicolumn{1}{|c|}{$\mathbf{\|Ax_{sol}-b\|_2}$} &0.047&0.187&0.062&0.109&0.037&0.065 \\ \hline
    \multicolumn{1}{|c|}{$\mathbf{\|x_{sol}\|_1}$} &5588239.4&7000565.5&1508020.8&1838194.5&193828.1&311446.9 \\ \hline
    \multicolumn{1}{|c|}{$\mathbf{\|x_*\|_1}$} &5588239.3&7000565.2&1508021.0&1838194.7&193828.2&311447.1 \\ \hline
    \multicolumn{1}{|c|}{$\mathbf{CPU}$} &14.5&14.7&3.3&3.4&0.3&0.4 \\ \hline
    \multicolumn{1}{|c|}{$\mathbf{N_{\rm FAL}}$} &14.9&15.0&14.4&15.0&14.0&14.0 \\ \hline
    \multicolumn{1}{|c|}{$\mathbf{nMat}$} &59.8&60&58.8&60&72.2&92 \\ \hline
    \end{tabular}
    \caption{FAL scaling results: $m=n/4$, $s=m/100$ and
      $\|x_{sol}-x_*\|_{\infty}\leq 10^{-1}$}
    \label{ch2_tab:sparse_tol01}
    }
    \vspace{3mm}
    \centering
    {\scriptsize
    \begin{tabular}{c|c|c|c|c|c|c|}
    \cline{2-7}
    &\multicolumn{2}{|c|}{\textbf{n=512}$\times$\textbf{512}} &\multicolumn{2}{|c|}{\textbf{n=256}$\times$\textbf{256}}&\multicolumn{2}{|c|}{\textbf{n=64}$\times$\textbf{64}}\\ \cline{2-7}
    &\textbf{Average}&\textbf{Max}&\textbf{Average}&\textbf{Max}&\textbf{Average}&\textbf{Max}\\ \hline
    \multicolumn{1}{|c|}{$\mathbf{N_{\rm APG}}$} &29.9&30&29.5&30&37.7&49 \\ \hline
    \multicolumn{1}{|c|}{$\mathbf{|\norm{x_{sol}}_1-\norm{x_*}_1|/\norm{x_*}_{1}}$} &6.36E-08&9.54E-08&4.77E-08&6.85E-08&4.57E-08&1.66E-07\\ \hline
    \multicolumn{1}{|c|}{$\mathbf{\max\{|(x_{sol})_i-(x_*)_i|: (x_*)_i\neq 0\}}$}
    &7.66E-03&9.27E-03&6.92E-03&8.60E-03&5.63E-03&9.63E-03 \\ \hline
    \multicolumn{1}{|c|}{$\mathbf{\max\{|(x_{sol})_i|: (x_*)_i=0\}}$}
    &0.00E+00&0.00E+00&0.00E+00&0.00E+00&0.00E+00&0.00E+00 \\ \hline
    \multicolumn{1}{|c|}{$\mathbf{\|Ax_{sol}-b\|_2}$} &0.027&0.032&0.013&0.015&0.004&0.006 \\ \hline
    \multicolumn{1}{|c|}{$\mathbf{\|x_{sol}\|_1}$} &5588239.7&7000565.6&1508021.0&1838194.8&193828.2&311447.1 \\ \hline
    \multicolumn{1}{|c|}{$\mathbf{\|x_*\|_1}$} &5588239.3&7000565.2&1508021.0&1838194.7&193828.2&311447.1 \\ \hline
    \multicolumn{1}{|c|}{$\mathbf{CPU}$} &15.0&15.2&3.4&3.5&0.3&0.4 \\ \hline
    \multicolumn{1}{|c|}{$\mathbf{N_{\rm FAL}}$} &15.0&15.0&15.0&15.0&14.7&16.0 \\ \hline
    \multicolumn{1}{|c|}{$\mathbf{nMat}$} &61.8&62&61&62&77.4&100 \\ \hline
    \end{tabular}
    \caption{FAL scaling results: $m=n/4$, $s=m/100$ and
      $\|x_{sol}-x_*\|_{\infty}\leq 10^{-2}$}
    \label{ch2_tab:sparse_tol001}
    }
\end{table}
\begin{table}[!htb]
    \centering
    {\scriptsize
    \begin{tabular}{c|c|c|c|c|c|c|}
    \cline{2-7}
    &\multicolumn{2}{|c|}{\textbf{n=512}$\times$\textbf{512}} &\multicolumn{2}{|c|}{\textbf{n=256}$\times$\textbf{256}}&\multicolumn{2}{|c|}{\textbf{n=64}$\times$\textbf{64}}\\ \cline{2-7}
    &\textbf{Average}&\textbf{Max}&\textbf{Average}&\textbf{Max}&\textbf{Average}&\textbf{Max}\\ \hline
    \multicolumn{1}{|c|}{$\mathbf{N_{\rm APG}}$} &28.9&29&28.2&29&27.2&28 \\ \hline
    \multicolumn{1}{|c|}{$\mathbf{|\norm{x_{sol}}_1-\norm{x_*}_1|/\norm{x_*}_{1}}$} &6.82E-07&2.51E-06&2.01E-06&3.31E-06&4.93E-06&8.35E-06 \\ \hline
    \multicolumn{1}{|c|}{$\mathbf{\max\{|(x_{sol})_i-(x_*)_i|: (x_*)_i\neq 0\}}$}
    &6.46E-01&9.83E-01&8.26E-01&9.88E-01&7.92E-01&1.00E+00 \\ \hline
    \multicolumn{1}{|c|}{$\mathbf{\max\{|(x_{sol})_i|: (x_*)_i=0\}}$}
    &1.31E-01&2.24E-01&1.83E-01&4.07E-01&1.19E-01&2.12E-01\\ \hline
    \multicolumn{1}{|c|}{$\mathbf{\|Ax_{sol}-b\|_2}$} &2.432&4.802&2.020&2.441&0.857&1.203 \\ \hline
    \multicolumn{1}{|c|}{$\mathbf{\|x_{sol}\|_1}$} &56631758.9&59669790.2&14250619.6&15030777.3&1033569.1&1289376.0 \\ \hline
    \multicolumn{1}{|c|}{$\mathbf{\|x_*\|_1}$} &56631797.7&59669841.3&14250648.3&15030813.1&1033574.2&1289377.9 \\ \hline
    \multicolumn{1}{|c|}{$\mathbf{CPU}$} &14.5&14.6&3.3&3.4&0.2&0.6 \\ \hline
    \multicolumn{1}{|c|}{$\mathbf{N_{\rm FAL}}$} &14.9&15.0&14.2&15.0&13.8&14.0 \\ \hline
    \multicolumn{1}{|c|}{$\mathbf{nMat}$} &59.8&60&58.4&60&56.4&58 \\ \hline
    \end{tabular}
    \caption{FAL scaling results: $m=n/4$, $s=m/10$ and $\|x_{sol}-x_*\|_{\infty}\leq 1$}
    \label{ch2_tab:nonsparse_tol1}
    }
    \vspace{3mm}
    {\scriptsize
    \begin{tabular}{c|c|c|c|c|c|c|}
    \cline{2-7}
    &\multicolumn{2}{|c|}{\textbf{n=512}$\times$\textbf{512}} &\multicolumn{2}{|c|}{\textbf{n=256}$\times$\textbf{256}}&\multicolumn{2}{|c|}{\textbf{n=64}$\times$\textbf{64}}\\ \cline{2-7}
    &\textbf{Average}&\textbf{Max}&\textbf{Average}&\textbf{Max}&\textbf{Average}&\textbf{Max}\\ \hline
    \multicolumn{1}{|c|}{$\mathbf{N_{\rm APG}}$} &33.7&35&32.7&34&31.2&33 \\ \hline
    \multicolumn{1}{|c|}{$\mathbf{|\norm{x_{sol}}_1-\norm{x_*}_1|/\norm{x_*}_{1}}$} &5.30E-07&7.38E-07&6.70E-07&1.03E-06&4.56E-07&8.46E-07 \\ \hline
    \multicolumn{1}{|c|}{$\mathbf{\max\{|(x_{sol})_i-(x_*)_i|: (x_*)_i\neq 0\}}$}
    &9.21E-02&9.98E-02&8.96E-02&9.58E-02&7.06E-02&8.92E-02 \\ \hline
    \multicolumn{1}{|c|}{$\mathbf{\max\{|(x_{sol})_i|: (x_*)_i=0\}}$}
    &1.17E-03&6.17E-03&2.38E-03&1.26E-02&7.16E-03&2.40E-02 \\ \hline
    \multicolumn{1}{|c|}{$\mathbf{\|Ax_{sol}-b\|_2}$} &0.601&0.748&0.357&0.469&0.087&0.120 \\ \hline
    \multicolumn{1}{|c|}{$\mathbf{\|x_{sol}\|_1}$} &56631827.7&59669880.2&14250657.8&15030821.1&1033574.7&1289378.1 \\ \hline
    \multicolumn{1}{|c|}{$\mathbf{\|x_*\|_1}$} &56631797.7&59669841.3&14250648.3&15030813.1&1033574.2&1289377.9 \\ \hline
    \multicolumn{1}{|c|}{$\mathbf{CPU}$} &16.8&17.6&3.8&4.0&0.2&0.7 \\ \hline
    \multicolumn{1}{|c|}{$\mathbf{N_{\rm FAL}}$} &17.1&18.0&16.7&17.0&15.7&16.0 \\ \hline
    \multicolumn{1}{|c|}{$\mathbf{nMat}$} &69.4&72&67.4&70&64.4&68 \\ \hline
    \end{tabular}
    \caption{FAL scaling results: $m=n/4$, $s=m/10$ and $\|x_{sol}-x_*\|_{\infty}\leq 10^{-1}$}
    \label{ch2_tab:nonsparse_tol01}
    }
    \vspace{3mm}
    \centering
    {\scriptsize
    \begin{tabular}{c|c|c|c|c|c|c|}
    \cline{2-7}
    &\multicolumn{2}{|c|}{\textbf{n=512}$\times$\textbf{512}} &\multicolumn{2}{|c|}{\textbf{n=256}$\times$\textbf{256}}&\multicolumn{2}{|c|}{\textbf{n=64}$\times$\textbf{64}}\\ \cline{2-7}
    &\textbf{Average}&\textbf{Max}&\textbf{Average}&\textbf{Max}&\textbf{Average}&\textbf{Max}\\ \hline
    \multicolumn{1}{|c|}{$\mathbf{N_{\rm APG}}$} &38.7&39&38.3&39&37.7&39 \\ \hline
    \multicolumn{1}{|c|}{$\mathbf{|\norm{x_{sol}}_1-\norm{x_*}_1|/\norm{x_*}_{1}}$} &4.94E-08&5.80E-08&4.32E-08&5.54E-08&2.16E-08&5.06E-08 \\ \hline
    \multicolumn{1}{|c|}{$\mathbf{\max\{|(x_{sol})_i-(x_*)_i|: (x_*)_i\neq 0\}}$}
    &8.71E-03&9.96E-03&8.51E-03&9.88E-03&7.19E-03&9.41E-03\\ \hline
    \multicolumn{1}{|c|}{$\mathbf{\max\{|(x_{sol})_i|: (x_*)_i=0\}}$}
    &1.58E-04&1.11E-03&0.00E+00&0.00E+00&0.00E+00&0.00E+00\\ \hline
    \multicolumn{1}{|c|}{$\mathbf{\|Ax_{sol}-b\|_2}$} &0.069&0.084&0.038&0.042&0.010&0.011 \\ \hline
    \multicolumn{1}{|c|}{$\mathbf{\|x_{sol}\|_1}$} &56631794.9&59669838.5&14250647.7&15030812.4&1033574.2&1289377.9 \\ \hline
    \multicolumn{1}{|c|}{$\mathbf{\|x_*\|_1}$} &56631797.7&59669841.3&14250648.3&15030813.1&1033574.2&1289377.9 \\ \hline
    \multicolumn{1}{|c|}{$\mathbf{CPU}$} &19.3&19.5&4.4&4.5&0.3&0.7 \\ \hline
    \multicolumn{1}{|c|}{$\mathbf{N_{\rm FAL}}$} &19.7&20.0&19.3&20.0&18.8&19.0 \\ \hline
    \multicolumn{1}{|c|}{$\mathbf{nMat}$} &79.4&80&78.6&80&77.4&80 \\ \hline
    \end{tabular}
    \caption{FAL scaling results: $m=n/4$, $s=m/10$ and $\|x_{sol}-x_*\|_{\infty}\leq 10^{-2}$}
    \label{ch2_tab:nonsparse_tol001}
    }
\end{table}
\vspace{-5mm}

The worst case bound in Theorem~\ref{ch2_thm:epsilon_convergence} suggests that FAL requires
$\cO(\frac{1}{\epsilon})$ \alg{APG} iterations (or equivalently,
matrix-vector multiplies) to compute an $\epsilon$-feasible and $\epsilon$-optimal solution to the basis pursuit
problem. In our numerical experiments we found that we required only $4\pm
1$ APG iterations per FAL iteration; therefore, we
required only $\cO(\log(\frac{1}{\epsilon}))$ APG iterations to
compute an $\epsilon$-optimal solution. In order to clearly demonstrate this phenomenon, we created $5$ random instances of $x_*\in\reals^n$ and
partial DCT matrix $A\in\reals^{m\times n}$ such that $n=64^2$ and
$m=\frac{n}{4}$ as described in
Section~\ref{ch2_sec:expt-setup-nonoise}. Any $x_*$ created contains $\lceil\frac{m}{10}\rceil$
nonzero components such that the largest and smallest magnitude of those
components are $10^5$ and $1$, respectively.

 We solved this set of random instances with \alg{FAL} using $\textsc{FALstop} = \{\norm{u^{(\ell)}-u^{(\ell-1)}}_{\infty} \leq 5\times 10^{-11}\}$. As before, let $\mathbf{N_{\rm FAL}}$ denote the number of FAL iterations required to compute $x_{sol}$ satisfying the stopping condition $\textsc{FALstop}$. Let $\mathbf{\Nk}$ be the number of
 \alg{APG} iterations done on the $k$-th call until the inner
 stopping condition \eqref{ch2_eq:inner-stopping-condition} holds and
 $\mathbf{N_{\rm APG}}=\sum^{\mathbf{N_{\rm FAL}}}_{k=1}\mathbf{\Nk}$ be
 the \emph{total} number of inner iterations, i.e. \emph{total} number of APG iterations, to compute $x_{sol}$.

 For all five instances $\mathbf{N_{\rm FAL}}\approx 45$, $\mathbf{N_{\rm APG}}\approx 95$, $\max_{1\leq i\leq n}\{|(x_{sol})_i-(x_*)_i|:\
 (x_*)_i\neq 0\}\approx7\times10^{-11}$,  $\max_{1\leq i\leq
   n}\{|(x_{sol})_i|:\ (x_*)_i=0\}=0$ and $\norm{Ax_{sol}-b}_2\approx
 1\times 10^{-10}$. These numbers show that each output $x_{sol}$ is $15$
 digits accurate and very close to feasibility.

 Let $u^{(k,\ell)}$ denote $u^{(\ell)}$ iterate on the $k$-th APG call. For any $1\leq k\leq \mathbf{N_{\rm FAL}}$ and $1\leq \ell \leq
 \mathbf{\Nk}$, define
 $x^{(\sum^{k-1}_{i=1}\mathbf{N^{(i)}}+\ell)}_{\mathbf{in}}:=u^{(k,\ell)}$.
 In Figure~\ref{ch2_fig:log_plot}, we plot the relative error,
 relative feasibility and relative optimality of the inner iterates $x^{(j)}_{\mathbf{in}}$ as functions of \alg{APG} cumulative
 iteration counter $j\in\{1,...,\mathbf{N_{\rm APG}}\}$. From the plots in
 Figure~\ref{ch2_fig:log_plot}, it is clear that, in practice, the complexity of computing an $\epsilon$-feasible,
 $\epsilon$-optimal iterate is  $\cO(\log(1/\epsilon))$, as opposed to the $\cO(1/\epsilon)$ worst case complexity bound established
 Theorem~\ref{ch2_thm:epsilon_convergence}.
\begin{figure} [!htb]
\centering
    \includegraphics[scale=0.65]{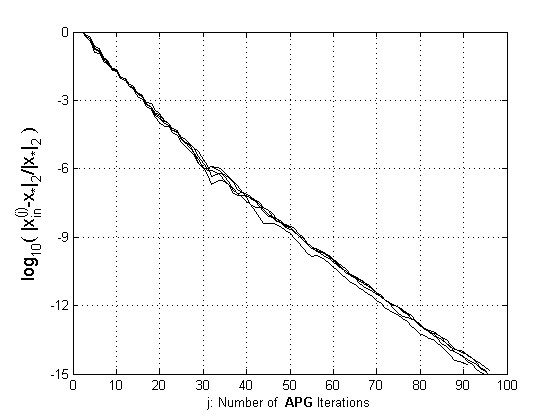}
    \includegraphics[scale=0.65]{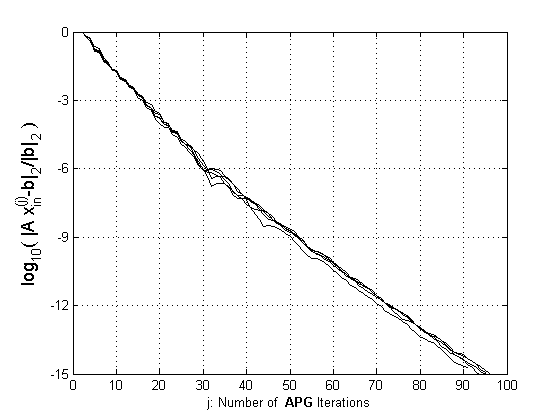}
    \includegraphics[scale=0.65]{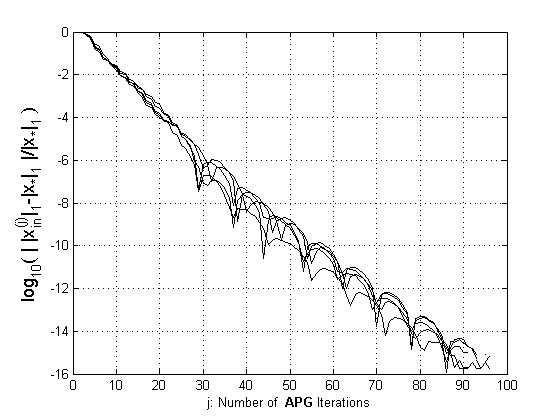}
    \caption{Relative solution error, feasibility and optimality vs APG iterations }
\label{ch2_fig:log_plot}
\end{figure}
\clearpage
\subsubsection{Comparison with other solvers}
\label{ch2_sec:test}
In this section, we report the results of our numerical experiments
comparing FAL with SPA~\cite{AybatI09:SPA}, NESTA
v1.1~\cite{Can09_4J}~[\url{http://www.acm.caltech.edu/~nesta/}],
FPC and FPC-BB from
FPC~v2.0~\cite{Yin07_1R,Yin08_1J}
~[\url{http://www.caam.rice.edu/~optimization/L1/fpc/}], FPC-AS
v1.21~\cite{Wen09_1R}
[\url{http://www.caam.rice.edu/~optimization/L1/FPC_AS/}],
YALL1 v1.4~\cite{Yang09}
[\url{http://www.yall1.blogs.rice.edu}] and SPGL1 v1.7~\cite{Ber08_1J} [\url{http://www.cs.ubc.ca/labs/scl/spgl1/}]. 
We set the parameter values for each of the six solvers so that they
all produce a  solution with  $\ell_{\infty}$-error approximately equal to $5\times 10^{-4}$,
i.e. $\norm{x_{sol}-x_*}_\infty \approx 5\times 10^{-4}$. This criterion
results in the following set of parameters (all other parameters not mentioned below are set to their default values).
\begin{enumerate}[(a)]
\item \textbf{FAL}: We set $\gamma=2.5\times10^{-4}$, and the initial
  update coefficients  $c_\lambda^{(1)}=0.4$,
  $c_\tau^{(1)}=0.4$ and $t^{(1)}=2$. 
  For $k\geq 2$,
  we used the functions $c_{\lambda}(\cdot)$ and $t(\cdot)$  described in
  \eqref{ch2_eq:xi_function} and \eqref{ch2_eq:t_function}, respectively.
  The parameters $(\lk,\tk,\epsk)$ were set as described in
  Section~\ref{ch2_sec:multiplier_selection}.
\item \textbf{SPA}:  $\gamma=5\times10^{-5}$, $c_\tau^{(0)} = 0.2$,
  $c^{(1)}_\tau = 0.855$, $c_\epsilon = 0.8$, $c_\lambda = 0.9$ and $c_\mu
  = c_\nu= 0.1$. For details on these parameters refer to
  \cite{AybatI09:SPA}.
\item \textbf{NESTA}: $\mu=1\times 10^{-4}$ and $\gamma=1\times
  10^{-10}$. NESTA solves $\min_{\norm{Ax-b}_2\leq \delta} p_{\mu}(x)$,
  where $p_\mu(x)=\max\{x^Tu-\frac{\mu}{2}\norm{u}_2^2:
  \norm{u}_\infty\leq 1\}$. NESTA terminates when
  $\frac{|p_\mu(\xk)-\bar{p}_\mu(\xk)|}{\bar{p}_\mu(\xk)}<\gamma$, for
  some $\gamma>0$, where
  $\bar{p}_\mu(\xk)=\frac{1}{min\{10,k\}}\sum_{\ell=1}^{min\{10,k\}}
  p_{\mu}(x^{(k-\ell)})$.
\item  \textbf{FPC} and \textbf{FPC-BB}: $\frac{1}{\lambda}= 1.5 \times
  10^4$.  FPC and FPC-BB  solve
  $\min_{x\in\Re^n}\norm{x}_1+\frac{1}{\lambda}\norm{Ax-b}_2^2$.
\item \textbf{FPC-AS}: $\lambda=7.5\times 10^{-5}$. FPC-AS  solves
  $\min_{x\in\Re^n}\lambda\norm{x}_1+\frac{1}{2}\norm{Ax-b}_2^2$.
\item \textbf{YALL1~(BP)}: $\gamma=2\times 10^{-9}$ and $nonorth=0$. YALL1~(BP) algorithm solves the basis pursuit problem
  $\min_{x\in\Re^n}\{\norm{x}_1:\ Ax=b\}$ and terminates when $\frac{\norm{x_{k+1}-x_k}_2}{\norm{x_{k+1}}_2}\leq\gamma$. $nonorth=0$ indicates that $AA^T=I$.
\item \textbf{SPGL1~(BP)}: $optTol=5\times 10^{-3}$ and $bpTol=1\times 10^{-6}$. SPGL1~(BP) algorithm solves the basis pursuit problem
  $\min_{x\in\Re^n}\{\norm{x}_1:\ Ax=b\}$. For the optimality and basis pursuit tolerance parameters, $optTol$ and $bpTol$, refer to \cite{Ber08_1J}.
\end{enumerate}
The termination criteria for the different solvers were not directly
comparable since the different solvers solve slightly different
formulations of the basis pursuit problem. However, we attempted to set
the stopping parameter $\gamma$ for FAL so that on average the stopping
criterion for FAL was more stringent than any of the other solvers.

We tested each solver on the same set of 10 random instances of size
$n=512\times512$ that were generated using the procedure
described in Section~\ref{ch2_sec:expt-setup-nonoise}.
The results of the experiments are displayed in
Table~\ref{ch2_tab:comparative_test_results}.
The experimental results in
Table~\ref{ch2_tab:comparative_test_results}, show that 
FAL was {\em six} times faster than SPA and NESTA, approximately
{\em four} times faster than FPC, and {\em two} times faster than FPC-BB
and FPC-AS algorithms. Moreover, unlike the other solvers, for all $10$
instances, FAL accurately identified
the support of the target signal, 
without any heuristic thresholding step. This feature of FAL is very
appealing in practice. For signals
with a large dynamic range, almost all of the state-of-the-art efficient
algorithms
produce a solution with many small non zeros terms, and it is often
hard to determine this threshold. 
\subsection{Experiments with measurement noise}
\subsubsection{Signal generation}
\label{ch2_sec:expt-setup-noise}
For this set of experiments the target signal $\xbp \in \Re^n$ was
generated as follows: $(x_*)_i=\mathbf{1}(i\in\Lambda)\ \Theta_i$,
where
\begin{enumerate}[(i)]
\item the set $\Lambda$ was constructed by randomly selecting $s$ indices
  from the set $\{1,\ldots,n\}$,
\item  $\Theta_i$, $i \in \Lambda$, were independently, and
  identically distributed standard Gaussian random variables.
\end{enumerate}
The measurement matrix $A$ and the measurement vector $b$ were constructed
as follows. We set the  number of observations $m =
\lceil\frac{n}{4}\rceil$. Each element $A_{ij}$ were sampled IID from a
standard Normal distribution. The
measurement $b=A x_*+\zeta$,
where each component $\zeta_i\in\reals^m$ was sampled IID
from a mean $0$ and variance $\varrho^2$ Normal
distribution. Therefore, the signal to noise ratio~(SNR) of the
measurement $b$ was
\begin{align}
\label{ch2_eq:snr-relation}
\proc{SNR}(b) =
10\log_{10}\left(\frac{\mE[\norm{Ax_*}_2^2]}{\mE[\norm{\zeta}_2^2]}\right) =
10\log_{10}\left(\frac{s}{\varrho^2}\right),
\end{align}
or equivalently, $\varrho^2 = s 10^{-\text{SNR}(b)/10}$.
For each random $\xbp$ and $A$, we considered SNR equal to $20$dB, $30$dB and
$40$dB.
\begin{table}[!htb]
    \centering
    {\scriptsize
    \begin{tabular}{cc|c|c|c|}
    \cline{2-5}
    &\multicolumn{2}{|c|}{\textbf{FAL}} &\multicolumn{2}{|c|}{\textbf{FPC-AS}}\\ \cline{2-5}
    &\multicolumn{1}{|c|}{\textbf{Average}}&\textbf{Max}&\textbf{Average}&\textbf{Max}\\ \hline
    \multicolumn{1}{|c|}{$\mathbf{|\norm{x_{sol}}_1-\norm{x_*}_1|/\norm{x_*}_{1}}$}
    &2.6E-09&3.2E-09&3.5E-08&3.6E-08\\ \hline
    \multicolumn{1}{|c|}{$\mathbf{\max\{|(x_{sol})_i-(x_*)_i|: (x_*)_i\neq 0\}}$}
    &5.1E-04&6.2E-04&6.5E-04&7.1E-04\\ \hline
    \multicolumn{1}{|c|}{$\mathbf{\max\{|(x_{sol})_i|: (x_*)_i=0\}}$}
    &0&0&1.2E-04&1.5E-04\\ \hline
    \multicolumn{1}{|c|}{$\mathbf{\|Ax_{sol}-b\|_2}$}
    &3.7E-03&4.4E-03&1.2E-02&1.2E-02\\ \hline
    \multicolumn{1}{|c|}{$\mathbf{\|x_{sol}\|_1}$}
    &56631797.8&59669841.4&56631795.7&59669839.3\\ \hline
    \multicolumn{1}{|c|}{$\mathbf{\|x_*\|_1}$}
    &56631797.7&59669841.3&56631797.7&59669841.3\\ \hline
    \multicolumn{1}{|c|}{$\mathbf{CPU}$}
    &11.0&12.3&22.2&23.9\\ \hline
    \multicolumn{1}{|c|}{$\mathbf{nMat}$}
    &98&99&109~/~205.6&109~/~208\\ \hline\\
    \cline{2-5}
    &\multicolumn{2}{|c|}{\textbf{SPA}}&\multicolumn{2}{|c|}{\textbf{NESTA}}\\ \cline{2-5}
    &\multicolumn{1}{|c|}{\textbf{Average}}&\textbf{Max}&\textbf{Average}&\textbf{Max}\\ \hline
    \multicolumn{1}{|c|}{$\mathbf{|\norm{x_{sol}}_1-\norm{x_*}_1|/\norm{x_*}_{1}}$}
    &1.0E-08&1.1E-08&6.5E-08&6.7E-08\\ \hline
    \multicolumn{1}{|c|}{$\mathbf{\max\{|(x_{sol})_i-(x_*)_i|: (x_*)_i\neq 0\}}$}
    &6.0E-04&6.8E-04&7.4E-04&8.4E-04\\ \hline
    \multicolumn{1}{|c|}{$\mathbf{\max\{|(x_{sol})_i|: (x_*)_i=0\}}$}
    &6.6E-05&7.1E-05&2.3E-04&3.1E-04\\ \hline
    \multicolumn{1}{|c|}{$\mathbf{\|Ax_{sol}-b\|_2}$}
    &6.0E-03&6.3E-03&4.0E-10&4.1E-10\\ \hline
    \multicolumn{1}{|c|}{$\mathbf{\|x_{sol}\|_1}$}
    &56631798.3&59669841.9&56631801.4&59669845.0\\ \hline
    \multicolumn{1}{|c|}{$\mathbf{\|x_*\|_1}$}
    &56631797.7&59669841.3&56631797.7&59669841.3\\ \hline
    \multicolumn{1}{|c|}{$\mathbf{CPU}$}
    &67.3&73.0&72.1&80.1\\ \hline
    \multicolumn{1}{|c|}{$\mathbf{nMat}$}
    &583.2&587&632.4&636\\ \hline\\
    \cline{2-5}
    &\multicolumn{2}{|c|}{\textbf{FPC}} &\multicolumn{2}{|c|}{\textbf{FPC-BB}}\\ \cline{2-5}
    &\multicolumn{1}{|c|}{\textbf{Average}}&\textbf{Max}&\textbf{Average}&\textbf{Max}\\ \hline
    \multicolumn{1}{|c|}{$\mathbf{|\norm{x_{sol}}_1-\norm{x_*}_1|/\norm{x_*}_{1}}$}
    &3.5E-08&3.5E-08&3.2E-08&3.3E-08\\ \hline
    \multicolumn{1}{|c|}{$\mathbf{\max\{|(x_{sol})_i-(x_*)_i|: (x_*)_i\neq 0\}}$}
    &6.8E-04&7.3E-04&6.1E-04&6.7E-04\\ \hline
    \multicolumn{1}{|c|}{$\mathbf{\max\{|(x_{sol})_i|: (x_*)_i=0\}}$}
    &1.6E-04&1.9E-04&1.3E-04&1.6E-04\\ \hline
    \multicolumn{1}{|c|}{$\mathbf{\|Ax_{sol}-b\|_2}$}
    &1.2E-02&1.2E-02&1.1E-02&1.1E-02\\ \hline
    \multicolumn{1}{|c|}{$\mathbf{\|x_{sol}\|_1}$}
    &56631795.7&59669839.3&56631795.9&59669839.5\\ \hline
    \multicolumn{1}{|c|}{$\mathbf{\|x_*\|_1}$}
    &56631797.7&59669841.3&56631797.7&59669841.3\\ \hline
    \multicolumn{1}{|c|}{$\mathbf{CPU}$}
    &40.4&50.0&22.7&26.4\\ \hline
    \multicolumn{1}{|c|}{$\mathbf{nMat}$}
    &383.0&387&195.0&195\\ \hline\\
    \cline{2-5}
    &\multicolumn{2}{|c|}{\textbf{YALL1~(BP)}} &\multicolumn{2}{|c|}{\textbf{SPGL1}}\\ \cline{2-5}
    &\multicolumn{1}{|c|}{\textbf{Average}}&\textbf{Max}&\textbf{Average}&\textbf{Max}\\ \hline
    \multicolumn{1}{|c|}{$\mathbf{|\norm{x_{sol}}_1-\norm{x_*}_1|/\norm{x_*}_{1}}$}
    &9.4E-10&1.4E-09&3.2E-09&6.7E-09\\ \hline
    \multicolumn{1}{|c|}{$\mathbf{\max\{|(x_{sol})_i-(x_*)_i|: (x_*)_i\neq 0\}}$}
    &5.7E-04&8.0E-04&5.3E-04&7.6E-04\\ \hline
    \multicolumn{1}{|c|}{$\mathbf{\max\{|(x_{sol})_i|: (x_*)_i=0\}}$}
    &1.5E-19&1.5E-19&2.4E-04&3.3E-04\\ \hline
    \multicolumn{1}{|c|}{$\mathbf{\|Ax_{sol}-b\|_2}$}
    &4.4E-03&5.5E-03&4.2E-03&4.9E-03\\ \hline
    \multicolumn{1}{|c|}{$\mathbf{\|x_{sol}\|_1}$}
    &56631797.7&59669841.3&56631797.5&59669841.1\\ \hline
    \multicolumn{1}{|c|}{$\mathbf{\|x_*\|_1}$}
    &56631797.7&59669841.3&56631797.7&59669841.3\\ \hline
    \multicolumn{1}{|c|}{$\mathbf{CPU}$}
    &44.9&53.3&24.7&28.4\\ \hline
    \multicolumn{1}{|c|}{$\mathbf{nMat}$}
    &453.0&477&200.7&209\\ \hline
    \end{tabular}
    }
    \caption{Noiseless comparison tests:  $m=n/4$, $s=m/10$ and $\|x_{sol}-x_*\|_{\infty}\approx 5\times 10^{-4}$}
    \label{ch2_tab:comparative_test_results}
    \vspace{-1cm}
\end{table}
\subsubsection{Comparison with other solvers}
\label{ch2_sec:test_noise}
For each noise level, we created 10 random instances of size $n=128\times128$ using the procedure
described in Section~\ref{ch2_sec:expt-setup-noise}. 
We stopped each algorithm when the relative $\ell_2$-distance  of  consecutive iterates are less than
$\varrho$, i.e. we impose the noisy stopping condition in Section~\ref{sec:stop} for all the solvers.

Some of the solvers we tested solve the penalty formulation
$\min_{x\in\Re^n}\norm{x}_1+\frac{1}{\lambda}\norm{Ax-b}_2^2$.
Hale et. al.~\cite{Yin07_1R} proposed that
when the measurement noise vector $\zeta$
is a $N(0,\sigma)$ Gaussian vector, the penalty parameter
$\lambda$ should be set to $\lambda  =
\frac{\varrho~\sigma_{\min}(A)}{\sigma^2_{\max}(A)}~
\sqrt{\frac{\chi^2_{1-\alpha, m}}{n}}$, where $\chi^2_{1-\alpha, m}$
denotes the $1-\alpha$ critical value
of the $\chi^2$ distribution with $m$ degrees of freedom.
We used the function \texttt{getM\_mu.m} from FPC~v.2.0
package to compute $\lambda$  according to this formula. 
The other parameters were set as follows.
\begin{enumerate}[1.]
\item\textbf{FAL}: We set the initial update coefficients
  $c_\lambda^{(1)}=0.4$, $c_\tau^{(1)}=0.4$ and
  $t^{(1)}=2$. 
  For $k\geq 2$,
  we used the functions $c_{\lambda}(\cdot)$ and $t(\cdot)$  described in
  \eqref{ch2_eq:xi_function} and \eqref{ch2_eq:t_function}, respectively.
  The parameters $(\lk,\tk,\epsk)$ were set as described in
  Section~\ref{ch2_sec:multiplier_selection}.
\item \textbf{SPA}: $c_\tau^{(0)} = 0.2$, $c^{(1)}_\tau = 0.855$, $c_\epsilon = 0.8$, $c_\lambda = 0.9$ and $c_\mu
  = c_\nu= 0.1$. See \cite{AybatI09:SPA} the parameter definitions.
\item \textbf{NESTA}: NESTA solves $\min_{\norm{Ax-b}_2\leq \delta}
  p_{\mu}(x)$, where $p_\mu(x)=\max\{x^Tu-\frac{\mu}{2}\norm{u}_2^2:
  \norm{u}_\infty\leq 1\}$. $\mu=2\times 10^{-3}$ and the model parameter
  $\delta$ was  set to $\sqrt{m+2\sqrt{2m}}~\varrho$ as described in
  \cite{Can09_4J}.
\item \textbf{FPC} and \textbf{FPC-BB}: FPC and FPC-BB solve
  $\min_{x\in\Re^n}\norm{x}_1+\frac{1}{2\lambda}\norm{Ax-b}_2^2$;
  $\lambda$ was set as described above.
\item \textbf{FPC-AS}: FPC-AS solves
  $\min_{x\in\Re^n}\lambda\norm{x}_1+\frac{1}{2}\norm{Ax-b}_2^2$ and
  $\lambda$ was set as described.
\item \textbf{YALL1 (L1/L2)}: (L1/L2) option solves $\min_{x\in\Re^n} \norm{x}_1+\frac{1}{2\lambda}\norm{Ax-b}_2^2$ and
  $\lambda$ was set as described above.
\item \textbf{YALL1 (L1/L2con)}: (L1/L2con) option solves
  $\min_{\norm{Ax-b}_2\leq \delta} \norm{x}_1$, where the model parameter
  $\delta$ was set to $\sqrt{m+2\sqrt{2m}}~\varrho$.
\end{enumerate}
All the parameters other than ones explained above were set to their
default values. The results of the experiments are displayed in
Tables~\ref{ch2_tab:noisy_comperative_test_results_1} --
\ref{ch2_tab:noisy_comperative_test_results_3}.
\begin{table}[!htb]
    \begin{adjustwidth}{-2em}{-2em}
    \centering
    {\scriptsize
    \begin{tabular}{cc|c|c|c|c|c|c|c|}
    \cline{2-9}
    &\multicolumn{2}{|c|}{\textbf{FAL}}&\multicolumn{2}{|c|}{\textbf{FPC-AS}}&\multicolumn{2}{|c|}{\textbf{FPC}}&\multicolumn{2}{|c|}{\textbf{YALL1 (L1/L2)}}\\ \cline{2-9}
    &\multicolumn{1}{|c|}{\textbf{Average}}&\textbf{Max}&\textbf{Average}&\textbf{Max}&\textbf{Average}&\textbf{Max}&\textbf{Average}&\textbf{Max}\\ \hline
    \multicolumn{1}{|c|}{$\mathbf{\norm{x_{sol}-x_*}_2/\norm{x_*}_2}$}
    &0.007&0.008&0.007&0.008&0.012&0.013&0.008&0.009\\ \hline
    \multicolumn{1}{|c|}{$\mathbf{\max\{|(x_{sol})_i-(x_*)_i|: (x_*)_i\neq 0\}}$}
    &1.4E-02&1.7E-02&2.2E-02&2.9E-02&2.4E-02&2.6E-02&1.7E-02&2.0E-02 \\ \hline
    \multicolumn{1}{|c|}{$\mathbf{\max\{|(x_{sol})_i|: (x_*)_i=0\}}$}
    &1.1E-02&1.3E-02&9.0E-03&1.1E-02&1.4E-02&1.6E-02&1.4E-02&1.6E-02 \\ \hline
    \multicolumn{1}{|c|}{$\mathbf{\|Ax_{sol}-b\|_2}$}
    &4.1E-02&4.7E-02&4.4E-02&5.0E-02&2.5E-02&2.5E-02&5.1E-02&5.3E-02\\ \hline
    \multicolumn{1}{|c|}{$\mathbf{CPU}$}
    &13.1&13.8&18.9&20.3&156.9&166.9&31.8&34.1\\ \hline
    \multicolumn{1}{|c|}{$\mathbf{nMat}$}
    &62.8&65&67.8/113.8&71/117&735.4&769&149.5&157\\ \hline\\
    \cline{2-9}
    &\multicolumn{2}{|c|}{\textbf{SPA}}&\multicolumn{2}{|c|}{\textbf{NESTA}}&\multicolumn{2}{|c|}{\textbf{FPC-BB}}&\multicolumn{2}{|c|}{\textbf{YALL1 (L1/L2con)}}\\ \cline{2-9}
    &\multicolumn{1}{|c|}{\textbf{Average}}&\textbf{Max}&\textbf{Average}&\textbf{Max}&\textbf{Average}&\textbf{Max}&\textbf{Average}&\textbf{Max}\\ \hline
    \multicolumn{1}{|c|}{$\mathbf{\norm{x_{sol}-x_*}_2/\norm{x_*}_2}$}
    &0.011&0.012&0.019&0.020&0.012&0.012&0.013&0.014 \\ \hline
    \multicolumn{1}{|c|}{$\mathbf{\max\{|(x_{sol})_i-(x_*)_i|: (x_*)_i\neq 0\}}$}
    &2.3E-02&3.5E-02&4.1E-02&4.5E-02&2.3E-02&2.6E-02&2.2E-02&2.6E-02 \\ \hline
    \multicolumn{1}{|c|}{$\mathbf{\max\{|(x_{sol})_i|: (x_*)_i=0\}}$}
    &1.0E-02&1.4E-02&1.3E-02&1.5E-02&1.3E-02&1.6E-02&1.6E-02&1.8E-02 \\ \hline
    \multicolumn{1}{|c|}{$\mathbf{\|Ax_{sol}-b\|_2}$}
    &2.9E-02&6.8E-02&6.9E-02&6.9E-02&2.5E-02&2.6E-02&1.5E-02&1.6E-02 \\ \hline
    \multicolumn{1}{|c|}{$\mathbf{CPU}$}
    &66.8&76.1&264.1&293.0&39.4&43.7&28.5&30.8 \\ \hline
    \multicolumn{1}{|c|}{$\mathbf{nMat}$}
    &326.6&375&536.0&553&180.8&189&137.0&137 \\ \hline
    \multicolumn{1}{|c|}{$\mathbf{Preprocess Time}$}
    &N/A&N/A&581.7&667.8&N/A&N/A&N/A&N/A\\ \hline
    \end{tabular}
    }
    \end{adjustwidth}
    \caption{Noisy comparative tests: SNR($b$)=40dB}
    \label{ch2_tab:noisy_comperative_test_results_1}
    \vspace{-0.7cm}
\end{table}
\begin{table}[!htb]
    \begin{adjustwidth}{-2em}{-2em}
    \centering
    {\scriptsize
    \begin{tabular}{cc|c|c|c|c|c|c|c|}
    \cline{2-9}
    &\multicolumn{2}{|c|}{\textbf{FAL}}&\multicolumn{2}{|c|}{\textbf{FPC-AS}}&\multicolumn{2}{|c|}{\textbf{FPC}}&\multicolumn{2}{|c|}{\textbf{YALL1 (L1/L2)}}\\ \cline{2-9}
    &\multicolumn{1}{|c|}{\textbf{Average}}&\textbf{Max}&\textbf{Average}&\textbf{Max}&\textbf{Average}&\textbf{Max}&\textbf{Average}&\textbf{Max}\\ \hline
    \multicolumn{1}{|c|}{$\mathbf{\norm{x_{sol}-x_*}_2/\norm{x_*}_2}$}
    &0.024&0.027&0.023&0.027&0.036&0.038&0.031&0.033 \\ \hline
    \multicolumn{1}{|c|}{$\mathbf{\max\{|(x_{sol})_i-(x_*)_i|: (x_*)_i\neq 0\}}$}
    &5.4E-02&6.1E-02&5.8E-02&7.4E-02&7.3E-02&8.0E-02&6.0E-02&6.9E-02 \\ \hline
    \multicolumn{1}{|c|}{$\mathbf{\max\{|(x_{sol})_i|: (x_*)_i=0\}}$}
    &3.7E-02&4.0E-02&3.8E-02&4.2E-02&4.1E-02&4.9E-02&4.1E-02&4.7E-02 \\ \hline
    \multicolumn{1}{|c|}{$\mathbf{\|Ax_{sol}-b\|_2}$}
    &1.2E-01&1.3E-01&1.0E-01&1.1E-01&7.8E-02&7.9E-02&1.1E-01&1.2E-01 \\ \hline
    \multicolumn{1}{|c|}{$\mathbf{CPU}$}
    &10.8&11.0&19.8&22.2&90.1&101&21.7&22.3 \\ \hline
    \multicolumn{1}{|c|}{$\mathbf{nMat}$}
    &51.8&53&72.4/118.4&79/125&436.8&493&106.0&107 \\ \hline\\
    \cline{2-9}
    &\multicolumn{2}{|c|}{\textbf{SPA}}&\multicolumn{2}{|c|}{\textbf{NESTA}}&\multicolumn{2}{|c|}{\textbf{FPC-BB}}&\multicolumn{2}{|c|}{\textbf{YALL1 (L1/L2con)}}\\ \cline{2-9}
    &\multicolumn{1}{|c|}{\textbf{Average}}&\textbf{Max}&\textbf{Average}&\textbf{Max}&\textbf{Average}&\textbf{Max}&\textbf{Average}&\textbf{Max}\\ \hline
    \multicolumn{1}{|c|}{$\mathbf{\norm{x_{sol}-x_*}_2/\norm{x_*}_2}$}
    &0.023&0.025&0.078&0.082&0.035&0.036&0.036&0.038 \\ \hline
    \multicolumn{1}{|c|}{$\mathbf{\max\{|(x_{sol})_i-(x_*)_i|: (x_*)_i\neq 0\}}$}
    &5.1E-02&5.7E-02&1.7E-01&2.0E-01&6.9E-02&7.6E-02&6.0E-02&6.8E-02 \\ \hline
    \multicolumn{1}{|c|}{$\mathbf{\max\{|(x_{sol})_i|: (x_*)_i=0\}}$}
    &3.2E-02&3.9E-02&6.2E-02&7.3E-02&3.9E-02&4.6E-02&4.5E-02&5.3E-02 \\ \hline
    \multicolumn{1}{|c|}{$\mathbf{\|Ax_{sol}-b\|_2}$}
    &1.1E-01&1.1E-01&2.2E-01&2.2E-01&7.8E-02&7.9E-02&3.6E-02&3.8E-02 \\ \hline
    \multicolumn{1}{|c|}{$\mathbf{CPU}$}
    &55.4&58.6&207.1&221.1&25.8&27.9&20.6&21.5 \\ \hline
    \multicolumn{1}{|c|}{$\mathbf{nMat}$}
    &267.8&287&354.0&363&122.4&135&103.5&107 \\ \hline
    \multicolumn{1}{|c|}{$\mathbf{Preprocess Time}$}
    &N/A&N/A&581.7&667.8&N/A&N/A&N/A&N/A \\ \hline
    \end{tabular}
    }
    \end{adjustwidth}
    \caption{Noisy comparative tests: SNR($b$)=30dB}
    \label{ch2_tab:noisy_comperative_test_results_2}
    \vspace{-0.65cm}
\end{table}
\begin{table}[!htb]
    \begin{adjustwidth}{-2em}{-2em}
    \centering
    {\scriptsize
    \begin{tabular}{cc|c|c|c|c|c|c|c|}
    \cline{2-9}
    &\multicolumn{2}{|c|}{\textbf{FAL}}&\multicolumn{2}{|c|}{\textbf{FPC-AS}}&\multicolumn{2}{|c|}{\textbf{FPC}}&\multicolumn{2}{|c|}{\textbf{YALL1 (L1/L2)}}\\ \cline{2-9}
    &\multicolumn{1}{|c|}{\textbf{Average}}&\textbf{Max}&\textbf{Average}&\textbf{Max}&\textbf{Average}&\textbf{Max}&\textbf{Average}&\textbf{Max}\\ \hline
    \multicolumn{1}{|c|}{$\mathbf{\norm{x_{sol}-x_*}_2/\norm{x_*}_2}$}
    &0.090&0.103&0.099&0.105&0.104&0.111&0.100&0.107 \\ \hline
    \multicolumn{1}{|c|}{$\mathbf{\max\{|(x_{sol})_i-(x_*)_i|: (x_*)_i\neq 0\}}$}
    &2.0E-01&2.4E-01&2.0E-01&2.3E-01&2.1E-01&2.4E-01&1.9E-01&2.2E-01 \\ \hline
    \multicolumn{1}{|c|}{$\mathbf{\max\{|(x_{sol})_i|: (x_*)_i=0\}}$}
    &1.3E-01&1.9E-01&1.2E-01&1.4E-01&1.2E-01&1.4E-01&1.3E-01&1.6E-01 \\ \hline
    \multicolumn{1}{|c|}{$\mathbf{\|Ax_{sol}-b\|_2}$}
    &3.4E-01&5.1E-01&2.8E-01&2.9E-01&2.5E-01&2.6E-01&2.9E-01&2.9E-01 \\ \hline
    \multicolumn{1}{|c|}{$\mathbf{CPU}$}
    &8.3&8.6&22.6&23.9&71.2&74.5&13.8&14.1 \\ \hline
    \multicolumn{1}{|c|}{$\mathbf{nMat}$}
    &39.6&41&78.8/124.8&83/129&349.6&365&67.0&67 \\ \hline\\
    \cline{2-9}
    &\multicolumn{2}{|c|}{\textbf{SPA}}&\multicolumn{2}{|c|}{\textbf{NESTA}}&\multicolumn{2}{|c|}{\textbf{FPC-BB}}&\multicolumn{2}{|c|}{\textbf{YALL1 (L1/L2con)}}\\ \cline{2-9}
    &\multicolumn{1}{|c|}{\textbf{Average}}&\textbf{Max}&\textbf{Average}&\textbf{Max}&\textbf{Average}&\textbf{Max}&\textbf{Average}&\textbf{Max}\\ \hline
    \multicolumn{1}{|c|}{$\mathbf{\norm{x_{sol}-x_*}_2/\norm{x_*}_2}$}
    &0.108&0.116&0.251&0.267&0.100&0.109&0.124&0.132 \\ \hline
    \multicolumn{1}{|c|}{$\mathbf{\max\{|(x_{sol})_i-(x_*)_i|: (x_*)_i\neq 0\}}$}
    &2.0E-01&2.4E-01&5.4E-01&6.1E-01&2.0E-01&2.3E-01&2.1E-01&2.3E-01 \\ \hline
    \multicolumn{1}{|c|}{$\mathbf{\max\{|(x_{sol})_i|: (x_*)_i=0\}}$}
    &1.3E-01&1.5E-01&1.8E-01&2.1E-01&1.2E-01&1.4E-01&1.6E-01&1.8E-01 \\ \hline
    \multicolumn{1}{|c|}{$\mathbf{\|Ax_{sol}-b\|_2}$}
    &1.4E-01&1.4E-01&6.9E-01&6.9E-01&2.5E-01&2.5E-01&1.4E-01&1.6E-01 \\ \hline
    \multicolumn{1}{|c|}{$\mathbf{CPU}$}
    &54.8&58.9&170.1&176.4&23.8&27.0&15.4&16.1 \\ \hline
    \multicolumn{1}{|c|}{$\mathbf{nMat}$}
    &268.0&289&225.0&233&104.2&109&76.5&77 \\ \hline
    \multicolumn{1}{|c|}{$\mathbf{Preprocess Time}$}
    &N/A&N/A&581.7&667.8&N/A&N/A&N/A&N/A \\ \hline
    \end{tabular}
    }
    \end{adjustwidth}
    \caption{Noisy comparative tests: SNR($b$)=20dB}
    \label{ch2_tab:noisy_comperative_test_results_3}
    \vspace{-0.65cm}
\end{table}
\newpage
The results clearly show that 
FAL is faster then the other state-of-the-art algorithms over the SNR range 20dB--40dB.
Since  $\mathbf{nMat}$ only keeps tracks of matrix-vector multiplies with
the full $m\times n$ measurement matrix,  the CPU time $\mathbf{CPU}$ for
some of the solvers is not completely determined by
$\mathbf{nMat}$. For instance, at the
40dB SNR level, FAL computed 62.8 and FPC-AS computed 67.8 matrix-vector
multiplications on average; but the average CPU time for FAL was 13.1
secs, whereas it was 18.9 secs for FPC-AS.
This difference in the CPU time is due to
smaller size matrix-vector multiplies 
that FPC-AS computes during subspace optimization iterations. NESTA has the highest overhead cost: it needs SVD of $A=U\Sigma V^T$ at the beginning since Gaussian $A$ does not satisfy $AA^T=I$. The preprocess time reported for NESTA shows the time to compute the SVD of $A$. Moreover, on top of the reported number of matrix vector multiplications with $m\times n$ matrices, NESTA also computes 2 matrix vector multiplications with $m\times m$ matrices at each iteration, which is not reported.
\subsection{Comparison with other solvers on hard instances}
\label{sec:hard}
In order to demonstrate the robustness of FAL, we tested it on the Caltech
test problems: \textbf{CaltechTest1},
\textbf{CaltechTest2}, \textbf{CaltechTest3} and \textbf{CaltechTest4}
\cite{caltech08}. 
This is a set of small-sized hard instances of CS problems. The hardness
of these instances is due to the very large dynamic range of
the nonzero components~(see
Table~\ref{ch2_tab:caltech_properties}). For example, the target signal
$x_*\in\reals^{512}$ in \textbf{CaltechTest1} has $33$ nonzero components
with magnitude of $10^5$ and 5 components with magnitude of $1$,
i.e. $x_*$ has a dynamic range of 100dB.
\begin{table}[!h]
  \centering
  {\scriptsize
  \begin{tabular}{|c|c|c|c|c|}
    \hline
    \textbf{problem} & \textbf{n} & \textbf{m} & \textbf{s} & \textbf{(magnitude, \# elements of this magnitude)} \\ \hline
    \textbf{CaltechTest1} & 512 & 128 & 38 & $(10^5, 33),\ (1,5)$ \\ \hline
    \textbf{CaltechTest2} & 512 & 128 & 37 & $(10^5, 32),\ (1,5)$ \\ \hline
    \textbf{CaltechTest3} & 512 & 128 & 32 & $(10^{-1}, 31),\ (10^{-6},1)$ \\ \hline
    \textbf{CaltechTest4} & 512 & 102 & 26 & $(10^4, 13),\ (1, 12),\ (10^{-2},1)$ \\ \hline
  \end{tabular}
  }
  \caption{Characteristics of The Problems in CaltechTest Problem Set}\label{ch2_tab:caltech_properties}
\end{table}
\vspace{-5mm}

The Caltech problems have measurement noise. However, the $\proc{SNR}(b)=20\log_{10}\left(\frac{\norm{Ax_*}_2}{\norm{\zeta}_2}\right)$ for
\textbf{CaltechTest1}--\textbf{CaltechTest4}
problems is 228dB, 265dB, 168dB and 261dB,
respectively. Since the SNR values are very high,
we solved this set of problems solve them via basis pursuit formulation~\eqref{ch2_eq:l1_minimization} using FAL, SPA, NESTA, YALL1 and SPGL1; and via unconstrained basis pursuit denoising formulation \eqref{ch2_eq:l1_minimization_normsq} with small $\bar{\lambda}>0$ values using FPC, FPC-BB and FPC-AS.

For FAL, SPA, NESTA~v1.1, FPC and FPC-BB that come with v2.0 solver
package, FPC-AS~v1.21, YALL1~v1.4 and SPGL1~v1.7, we chose parameter values that
produced a solution $x_{\text{sol}}$ with high accuracy in reasonable time.
\begin{enumerate}[1.]
\item \textbf{FAL}: We set $\gamma=5\times10^{-9}$ and the initial update
  coefficients $c_\lambda^{(1)}=0.8$, $c_\tau^{(1)}=0.8$ and $t^{(1)}=1.9$. For $k\geq 2$,
  we used the functions $c_{\lambda}(\cdot)$ and $t(\cdot)$  described in
  \eqref{ch2_eq:xi_function} and \eqref{ch2_eq:t_function}, respectively.
  The parameters $(\lk,\tk,\epsk)$ were set as described in
  Section~\ref{ch2_sec:multiplier_selection}.
\item \textbf{SPA}: $\gamma=1\times10^{-8}$, $c_\tau^{(0)} = 0.1$,
  $c^{(1)}_\tau = 0.76$, $c_\epsilon = 0.8$, $c_\lambda = 0.95$ and $c_\mu
  = c_\nu= 0.4$. For details on these parameters refer to
  \cite{AybatI09:SPA}.
\item \textbf{NESTA}: $\mu=1\times 10^{-6}$ and $\gamma=1\times 10^{-16}$. See
  Section~\ref{ch2_sec:test} for the definition of $\mu$ and $\gamma$.
\item \textbf{FPC} and \textbf{FPC-BB}:  $\frac{1}{\lambda}= 1 \times
  10^{10}$, ${\rm xtol}=10^{-10}$, ${\rm gtol} =10^{-8}$ and ${\rm mxitr}=20000$, where
  ${\rm xtol}$, ${\rm gtol}$ set the termination conditions on the relative change in
  iterates and gradient, respectively, and ${\rm mxitr}$ is the iteration limit
  allowed. See Section~\ref{ch2_sec:test} for the definition of $\lambda$.
\item \textbf{FPC-AS}: $\lambda=1\times 10^{-10}$ and ${\rm gtol}= 10^{-14}$,
  where ${\rm gtol}$ is the termination criterion on the maximum norm of
  sub-gradient. See Section~\ref{ch2_sec:test} for the definition of $\lambda$.
\item \textbf{YALL1~(BP)}: $\gamma=1\times 10^{-11}$ and $nonorth=0$. See the item describing YALL1~(BP) for the definition of
 of $\gamma$ and $nonorth$ in Section~\ref{ch2_sec:test}.
 \item \textbf{SPGL1~(BP)}: $optTol=1\times 10^{-7}$, $bpTol=1\times 10^{-9}$ and $decTol=1\times 10^{-7}$. For the details on optimality and basis pursuit tolerance parameters, $optTol$, $bpTol$ and $decTol$, refer to \cite{Ber08_1J}.
\end{enumerate}
These parameter
values were fixed for all $4$ test problems  and all other parameters are
set to their default values.  The termination criteria were not directly
comparable since the different solvers use a slightly different
formulation of the basis pursuit problem. However,  we attempted to set
the stopping parameter $\gamma$ such that on the average the stopping
criterion for FAL was more stringent than those for the  other algorithms
we tested. The results of the experiments are displayed in Table~\ref{ch2_tab:caltech_results}.
In Table~\ref{ch2_tab:caltech_results}, the row labeled $\mathbf{CPU}$ lists
the row labeled
$\mathbf{rel.err}$ lists the relative error the solution, i.e
$\mathbf{rel.err}=\frac{\norm{x_{sol}-x_*}_2}{\norm{x_*}_2}$,
the row labeled $\mathbf{inf.err_+}$ lists the absolute error on the
nonzero components of $x_*$, i.e
$\mathbf{inf.err_+}=\max\{|(x_{sol})_i-(x_*)_i|: (x_*)_i\neq 0\}$, the row
labeled $\mathbf{inf.err_0}$ lists the absolute error on the zero
components of $x_*$,
without any thresholding or
post-processing.  None of the solvers, other than FAL,
were able to identify the true support of the target signal for any of the {\bf CaltechTest} instances.
\begin{table}
    \begin{adjustwidth}{-2em}{-2em}
    \centering
    {\scriptsize
    \begin{tabular}{|c|c|c|c|c|c|c|c|c|c|c|} \hline
        \textbf{Problem}&\textbf{Solver}& $\mathbf{\norm{x_*}_1}$ &
        $\mathbf{\norm{x_{sol}}_1}$ & rel.err & $inf.err_+$ & $inf.err_0$
        & $\mathbf{\norm{r}_2}$ & \textbf{CPU} & \textbf{nMat} &
        \textbf{nnz} \\ \hline
        \multirow{6}{*}{\textbf{Caltech1}}
        & \textbf{FAL} &\multirow{6}{*}{3300005}
        &3300005.00&5.15E-12&9.94E-07&0&1.16E-08&0.598&1715&38 \\
        & \textbf{SPA} &
        &3300005.00&1.85E-10&3.05E-05&1.68E-05&4.85E-06&7.783&20305&512 \\
        & \textbf{NESTA} &
        &3300005.00&2.43E-10&4.01E-05&2.18E-05&1.06E-10&9.902&18432&512 \\
        & \textbf{FPC} &
        &3300002.44&3.05E-06&5.17E-01&2.64E-01&1.78E-01&22.509&40001&109 \\
        & \textbf{FPC-BB} &
        &3300002.44&8.46E-06&5.16E-01&2.63E-01&1.78E-01&44.600&40001&109 \\
        & \textbf{FPC-AS} &
        &3300005.00&5.15E-12&9.97E-07&8.97E-10&1.62E-09&0.375&109~/~393&63 \\
        & \textbf{YALL1} &
        &3300005.00&5.61E-11&8.51E-05&1.19E-18&6.09E-05&5.486&14492&276 \\
        & \textbf{SPGL1} &
        &3300005.11&1.19E-06&1.20E+00&6.53E-01&9.94E-08&9.989&17705&171 \\ \hline
        \multirow{6}{*}{\textbf{Caltech2}}
        & \textbf{FAL} &\multirow{6}{*}{3300005}
        &3200005.00&7.17E-14&1.41E-08&0&7.03E-09&0.358&971&37 \\
        & \textbf{SPA} &
        &3200005.00&1.19E-10&2.04E-05&1.38E-05&4.73E-06&5.651&14001&512 \\
        & \textbf{NESTA} &
        &3200005.00&1.24E-10&2.10E-05&1.47E-05&9.34E-11&3.826&7204&512 \\
        & \textbf{FPC} &
        &3200004.97&2.15E-08&3.72E-03&2.32E-03&2.39E-03&23.540&40001&96 \\
        & \textbf{FPC-BB} &
        &3200004.47&3.43E-07&5.92E-02&3.70E-02&3.82E-02&43.019&40001&96 \\
        & \textbf{FPC-AS} &
        &3200005.00&7.58E-14&1.78E-08&2.03E-09&1.88E-09&0.222&127~/~407&63 \\
        & \textbf{YALL1} &
        &3200005.00&6.29E-11&1.01E-04&1.15E-18&5.76E-05&1.337&3137&275 \\
        & \textbf{SPGL1} &
        &3200005.00&1.35E-11&1.35E-05&8.15E-06&6.96E-08&16.413&28008&212 \\ \hline
        \multirow{6}{*}{\textbf{Caltech3}}
        & \textbf{FAL} &\multirow{6}{*}{6.200000974}
        &6.20000101&4.03E-08&1.49E-08&0&1.35E-08&0.166&359&32 \\
        & \textbf{SPA} &
        &6.19999388&5.82E-06&1.85E-06&8.99E-07&8.78E-07&4.663&9767&512 \\
        & \textbf{NESTA} &
        &6.20007451&5.02E-05&1.51E-05&8.72E-06&1.96E-16&5.131&8326&512 \\
        & \textbf{FPC} &
        &6.20000076&6.50E-08&2.01E-08&1.04E-08&1.80E-08&27.730&40001&78 \\
        & \textbf{FPC-BB} &
        &6.19975503&7.09E-05&2.22E-05&1.06E-05&1.84E-05&37.365&40001&80 \\
        & \textbf{FPC-AS} &
        &6.20000098&1.46E-09&3.78E-10&4.73E-10&1.23E-09&0.137&93~/~271&67 \\
        & \textbf{YALL1} &
        &6.30373200&8.53E-02&1.47E-01&1.19E-01&3.95E-16&23.048&50002&321 \\
        & \textbf{SPGL1} &
        &6.20000438&4.14E-06&6.62E-06&5.92E-06&9.99E-08&8.275&11885&131 \\ \hline
        \multirow{6}{*}{\textbf{Caltech4}}
        & \textbf{FAL} &\multirow{6}{*}{130012.01}
        &130012.010&1.28E-12&2.16E-08&0&1.24E-08&0.207&487&26 \\
        & \textbf{SPA} &
        &130012.010&3.80E-09&4.92E-05&1.86E-05&1.15E-05&3.788&8221&512 \\
        & \textbf{NESTA} &
        &130012.010&1.87E-09&2.37E-05&9.61E-06&5.71E-12&3.583&5904&512 \\
        & \textbf{FPC} &
        &130012.008&2.01E-08&2.62E-04&9.07E-05&1.39E-04&26.283&40001&71 \\
        & \textbf{FPC-BB} &
        &130010.234&1.92E-05&2.39E-01&7.46E-02&1.39E-01&44.804&40001&62 \\
        & \textbf{FPC-AS} &
        &130012.010&8.31E-13&9.01E-09&8.62E-09&3.86E-09&0.270&145~/~523&50 \\
        & \textbf{YALL1} &
        &130012.010&8.99E-11&5.34E-06&7.03E-20&3.64E-06&4.747&10682&305 \\
        & \textbf{SPGL1} &
        &130012.010&9.57E-11&4.42E-06&2.40E-06&9.97E-08&9.641&14647&97 \\ \hline
    \end{tabular}
    }
    \end{adjustwidth}
    \caption{CaltechTest Problem Set}\label{ch2_tab:caltech_results}
\end{table}
\section{Conclusion}
We propose a first-order augmented lagrangian algorithm~(FAL) for basis
pursuit.  FAL computes a solution to the basis pursuit problem  by solving a
sequence of augmented lagrangian subproblems, and  each subproblem is solved
using a variant of the infinite memory proximal gradient algorithm
(Algorithm 3)~\cite{Tseng08}. We prove
that FAL iterates converge to the optimal solution of the basis pursuit
problem whenever it is unique, which is true with overwhelming probability for compressed sensing problems (In~\cite{Can05_1J} Cand{\'e}s
and Tao have shown that for random measurement $A$ matrices the resulting
basis pursuit problem has a unique solution with very high
probability). We are able to prove FAL needs at most
$\cO(\epsilon^{-1})$ matrix-vector multiplies to compute an
$\epsilon$-feasible and $\epsilon$-optimal solution. However, in our numerical
experiments we observe that we only need $\cO(\log(\epsilon^{-1}))$
matrix-vector multiplies!
We found that for a fixed measurement ratio $m/n$, sparsity ratios $s/n$,
and solution accuracy $\gamma$, the number of matrix-vector multiplies
computed by FAL
were effectively independent of the dimension $n$. This allows us to
tune the algorithm parameters on small problems
and then use these parameters for all larger problems with the same
measurement and sparsity ratios. The numerical results reported in this
paper clearly show that FAL solves both the noise-less and noisy versions of the
compressive sensing problems very efficiently.
\section{Acknowledgments}
We thank the anonymous referees for their insightful
comments that significantly improved both the algorithm and the paper.
We also thank Professor Y. Zhang for helping us better understand the
capabilities of YALL1.
\newpage
\bibliographystyle{siam}
\bibliography{thesis}

\begin{thebibliography}{10}

\bibitem{Ser10_1J}
{\sc N.~S. Aybat and G.~Iyengar}, {\em Unified approach for minimizing
  composite norms}, submitted to Mathematical Programming Journal, Series A,
  (2010).

\bibitem{AybatI09:SPA}
\leavevmode\vrule height 2pt depth -1.6pt width 23pt, {\em A first-order
  smoothed penalty method for compressed sensing}, SIAM Journal on
  Optimization, 21 (2011), pp.~287--313.

\bibitem{Beck09_1J}
{\sc A.~Beck and M.~Teboulle}, {\em A fast iterative shrinkage-thresholding
  algorithm for linear inverse problems}, SIAM Journal on Imaging Sciences, 2
  (2009), pp.~183--202.

\bibitem{Can09_4J}
{\sc S.~Becker, J.~Bobin, and E.~Cand\`{e}s}, {\em Nesta: a fast and accurate
  first-order method for sparse recovery}, SIAM J. Imaging Sci., 4 (2011),
  pp.~1--39.

\bibitem{caltech08}
{\sc E.~Cand\`{e}s and S.~Becker}, {\em Some test problems for compressed
  sensing}.
\newblock private communication, 2008.

\bibitem{Can06_1J}
{\sc E.~Cand\`{e}s and J.~Romberg}, {\em Quantitative robust uncertainty
  principles and optimally sparse decompositions}, Foundations of Computational
  Mathematics, 6 (2006), pp.~227--254.

\bibitem{Can05_1J}
{\sc E.~Cand\`{e}s, J.~Romberg, and T.~Tao}, {\em Signal recovery from
  incomplete and inaccurate measurements}, Comm. Pure Appl. Math., 59 (2005),
  pp.~1207--1223.

\bibitem{Can06_2J}
\leavevmode\vrule height 2pt depth -1.6pt width 23pt, {\em Robust uncertainty
  principles: Exact signal reconstruction from highly incomplete frequency
  information}, IEEE Trans. Info. Th., 52 (2006).

\bibitem{Can06_3J}
{\sc E.~Cand\`{e}s and T.~Tao}, {\em Near optimal signal recovery from random
  projections: universal encoding strategies?}, IEEE Trans. Info. Th., 52
  (2006), pp.~5406--5425.

\bibitem{Dau04_2J}
{\sc I.~Daubechies, M.~Defrise, and C.~De~Mol}, {\em An iterative thresholding
  algorithm for linear inverse problems with a sparsity constraint},
  Communications on Pure and Applied Mathematics, 57 (2004), pp.~1413--1457.

\bibitem{Dau08_1J}
{\sc I.~Daubechies, M.~Fornasier, and I.~Loris}, {\em Accelerated projected
  gradient method for linear inverse problems with sparsity constraints},
  Journal of Fourier Analysis and Applications, 14 (2008), pp.~764--792.

\bibitem{Don06_1J}
{\sc D.~Donoho}, {\em Compressed sensing}, IEEE Trans. Info. Th., 52 (2006),
  pp.~1289--1306.

\bibitem{Duc08_1C}
{\sc J.~Duchi, S.~Shalew-Shwartz, Y.~Singer, and T.~Chandra}, {\em Efficient
  projections onto the $\ell_1$-ball for learning in high dimensions}, in
  Proceedings, Twenty-Fifth International Conference on Machine Learning,
  Andrew McCallum and Sam Roweis, eds., Helsinki, Finland, 2008, pp.~272--279.

\bibitem{Ede88_1J}
{\sc A.~Edelman}, {\em Eigenvalues and condition numbers of random matrices},
  SIAM Journal on Matrix Analysis and Applications, 9 (1988), pp.~543--560.

\bibitem{Wri07_1J}
{\sc M.~A. Figueiredo, R.~Nowak, and S.~J. Wright}, {\em Gradient projection
  for sparse reconstruction: Application to compressed sensing and other
  inverse problems}, IEEE Journal of Selected Topics in Signal Processing, 1
  (2007), pp.~586--597.

\bibitem{Yin07_1R}
{\sc E.~T. Hale, W.~Yin, and Y.~Zhang}, {\em A fixed-point continuation for
  $\ell$1-regularized minimization with applications to compressed sensing},
  tech. report, Rice University, 2007.

\bibitem{Yin08_1J}
\leavevmode\vrule height 2pt depth -1.6pt width 23pt, {\em Fized-point
  continuation for $\ell$1-minimization: Methodology and convergence}, SIAM
  Journal on Optimization, 19 (2008), pp.~1107--1130.

\bibitem{Boy07_1J}
{\sc Seung-Jean Kim, K.~Koh, M.~Lustig, S.~Boyd, and D.~Gorinevsky}, {\em An
  interior-point method for large-scale l1-regularized least squares}, Selected
  Topics in Signal Processing, IEEE Journal of, 1 (2007), pp.~606 --617.

\bibitem{Nesterov04}
{\sc Y.~Nesterov}, {\em \protect{Introductory Lectures on Convex Optimization:
  A Basic Course}}, \protect{Kluwer Academic Publishers}, 2004.

\bibitem{Nesterov05}
\leavevmode\vrule height 2pt depth -1.6pt width 23pt, {\em Smooth minimization
  of nonsmooth functions}, Mathematical Programming, 103 (2005), pp.~127--152.

\bibitem{Tseng08}
{\sc P.~Tseng}, {\em On accelerated proximal gradient methods for
  convex-concave optimization}, submitted to SIAM Journal on Optimization,
  (2008).

\bibitem{Ber08_1J}
{\sc E.~Van~den Berg and M.~P. Friedlander}, {\em Probing the pareto frontier
  for basis pursuit solutions}, SIAM Journal on Scientific Computing, 31
  (2008), pp.~890--912.

\bibitem{Wen09_1R}
{\sc Z.~Wen, W.~Yin, D.~Goldfarb, and Y.~Zhang}, {\em A fast algorithm for
  sparse reconstruction based on shrinkage, subspace optimization and
  continuation}, To appear in SIAM Journal on Scientific Computing,  (2009).

\bibitem{Yang09}
{\sc J.~Yang and Y.~Zhang}, {\em Alternating direction algorithms for
  l1-problems in compressive sensing}, Tech. Report TR09-37, CAAM, Rice
  University, 2009.

\bibitem{Yin08_2J}
{\sc W.~Yin, S.~Osher, D.~Goldfarb, and J.~Darbon}, {\em Bregman iterative
  algorithms for $\ell_1$ minimization with applications to compressed
  sensing}, SIAM Journal on Imaging Sciences, 1 (2008), pp.~143--168.

\end{thebibliography}
\newpage
\appendix
\section{Auxiliary results}
\label{ch2_app:sec2}
$\mbox{}$\\
\begin{theorem}
\label{ch2_thm:grad_convergence}
Let $f:\reals^n\rightarrow\reals$ be a convex function. Suppose the $\grad
f$ is Lipschitz continuous with the Lipschitz constant $L$. 
Fix $\epsilon > 0$. Suppose $\bar{x}\in\reals^n$ satisfies
$\lambda\norm{\bar{x}}_1+f(\bar{x})-(\lambda\norm{x^*}_1+f(x^*))\leq\epsilon$,
where $x^\ast \in \argmin\{\lambda \norm{x}_1 + f(x): x \in \reals^n\}$.
Then
\begin{align}
\frac{1}{2L}\sum_{i:|\grad f_i(\bar{x})|>\lambda}(|\grad
f_i(\bar{x})|-\lambda)^2\leq \epsilon. \label{ch2_eq:Lipschitz_ineq_L1}
\end{align}
The bound \eqref{ch2_eq:Lipschitz_ineq_L1} implies $\norm{\grad
  f(\bar{x})}_\infty \leq \sqrt{2L\epsilon}+\lambda$.
\end{theorem}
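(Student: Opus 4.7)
The plan is to compare $\bar{x}$ against the proximal-gradient step it would take from itself. Write $F(x) := \lambda\norm{x}_1 + f(x)$, $g := \grad f(\bar{x})$, and introduce the quadratic surrogate
\begin{align*}
\phi(y) := \lambda\norm{y}_1 + f(\bar{x}) + g^T(y-\bar{x}) + \frac{L}{2}\norm{y-\bar{x}}_2^2.
\end{align*}
The Lipschitz-gradient descent lemma gives $F(y) \leq \phi(y)$ for every $y$, with equality at $y=\bar{x}$. Setting $y^\ast := \argmin_y \phi(y)$, this produces the chain $F(x^\ast) \leq F(y^\ast) \leq \phi(y^\ast) \leq \phi(\bar{x}) = F(\bar{x})$, and combined with the hypothesis $F(\bar{x}) - F(x^\ast) \leq \epsilon$ it forces $\phi(\bar{x}) - \phi(y^\ast) \leq \epsilon$.

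Next I would exploit that $\phi$ is $L$-strongly convex with minimizer $y^\ast$, so $\phi(\bar{x}) - \phi(y^\ast) \geq \frac{L}{2}\norm{\bar{x} - y^\ast}_2^2$, yielding $\norm{\bar{x} - y^\ast}_2^2 \leq 2\epsilon/L$. The heart of the argument is then a coordinatewise lower bound on $L|\bar{x}_i - y_i^\ast|$ in terms of $|g_i|-\lambda$. Writing the first-order optimality of $y^\ast$ as $0 \in \lambda\partial\norm{y^\ast}_1 + g + L(y^\ast - \bar{x})$ produces some $q \in \partial\norm{y^\ast}_1$ with $\norm{q}_\infty \leq 1$ satisfying $L(\bar{x}_i - y_i^\ast) = g_i + \lambda q_i$ for every $i$. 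The reverse triangle inequality then delivers $L\,|\bar{x}_i - y_i^\ast| \geq |g_i| - \lambda|q_i| \geq |g_i| - \lambda$, which is positive precisely on the index set $\{i : |g_i| > \lambda\}$. Squaring, summing over this set, and combining with the strong-convexity bound produces the main inequality \eqref{ch2_eq:Lipschitz_ineq_L1}.

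Finally, the $\ell_\infty$ bound follows coordinatewise: if $|g_i| \leq \lambda$ then $|g_i| \leq \lambda + \sqrt{2L\epsilon}$ trivially, while if $|g_i| > \lambda$ then \eqref{ch2_eq:Lipschitz_ineq_L1} implies $(|g_i|-\lambda)^2 \leq 2L\epsilon$ and hence $|g_i| \leq \lambda + \sqrt{2L\epsilon}$ again. The one subtle step to verify is that the \emph{same} multiplier $q$ is admissible at every coordinate, both on and off the support of $y^\ast$; this is handled automatically by the single subdifferential inclusion above, since $q \in \partial\norm{y^\ast}_1$ globally gives $|q_i| \leq 1$ for every $i$ (including the coordinates where $y_i^\ast = 0$, at which the subdifferential of $|\cdot|$ is $[-1,1]$).
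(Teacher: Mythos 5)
Your proof is correct, and it reaches the stated constants exactly, but it lower-bounds the decrease of the surrogate by a different mechanism than the paper. The paper first relaxes $\lambda\norm{y}_1\leq\lambda\norm{\bar{x}}_1+\lambda\norm{y-\bar{x}}_1$ so that the prox subproblem becomes a pure shrinkage in the variable $y-\bar{x}$, then writes down the minimizer in closed form via the soft-thresholding operator and evaluates the optimal value explicitly, obtaining $-\frac{1}{2L}\sum_{i:\abs{\grad f_i(\bar{x})}>\lambda}(\abs{\grad f_i(\bar{x})}-\lambda)^2$ directly as the guaranteed decrease; the inequality \eqref{ch2_eq:Lipschitz_ineq_L1} then falls out of $F(\bar{x})-F(x^*)\leq\epsilon$ in one line. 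You instead keep $\lambda\norm{y}_1$ exact in the surrogate, never compute the prox step in closed form, and combine two soft facts: the $L$-strong-convexity growth bound $\phi(\bar{x})-\phi(y^*)\geq\frac{L}{2}\norm{\bar{x}-y^*}_2^2$ and the coordinatewise stationarity identity $L(\bar{x}_i-y_i^*)=\grad f_i(\bar{x})+\lambda q_i$ with $\abs{q_i}\leq 1$, which gives $L\abs{\bar{x}_i-y_i^*}\geq\abs{\grad f_i(\bar{x})}-\lambda$. The paper's computation is shorter here because the shrinkage minimum happens to be available in closed form, and that closed form is reused elsewhere (e.g.\ in Lemma~\ref{ch2_lem:constrained_shrinkage}); your argument buys generality, since it only needs the regularizer to be convex with subgradients bounded in $\ell_\infty$-norm by $1$ (so it would transfer verbatim to other simple prox-friendly penalties where no closed-form evaluation of the prox value is available), at the cost of invoking the strong-convexity growth bound and the exact additivity $\partial\norm{y}_1=\prod_i\partial\abs{y_i}$, both of which you handle correctly.
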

\begin{proof}
Triangular inequality for $\norm{.}_1$ and Lipschitz continuity of $\grad f$
implies that for all $y \in \reals^n$
\eq
\lambda\norm{y}_1+f(y) \leq \lambda\norm{\bar{x}}_1+ f(\bar{x})+\grad
f(\bar{x})^T(y-\bar{x})+\frac{L}{2}\norm{y-\bar{x}}_2^2+\lambda\norm{y-\bar{x}}_1.
\en
Taking the minimum with respect to $y$, we get
\begin{align}
\lambda\norm{x^*}_1+f(x^*)
&\leq\lambda\norm{\bar{x}}_1+f(\bar{x})+\min_{y\in\reals^n}\left\{\grad
  f(\bar{x})^T(y-\bar{x})+\frac{L}{2}\norm{y-\bar{x}}_2^2+\lambda\norm{y-\bar{x}}_1\right\}.
\label{ch2_eq:combined_upper_bound}
\end{align}
Let $w\equiv\grad f(\bar{x})$. Then
\begin{align}
y^* &=
\argmin_{y\in\reals^n} \left\{
  w^T(y-\bar{x})+\frac{L}{2}\norm{y-\bar{x}}_2^2 +
  \lambda\norm{y-\bar{x}}_1\right\},\\
    & =\argmin_{y\in\reals^n}\left\{
      \frac{1}{2}\norm{y-\bar{x}+\frac{w}{L}}_2^2 +
      \frac{\lambda}{L}\norm{y-\bar{x}}_1\right\},\\
    &= \bar{x} +\mbox{sign}\left( \frac{-w}{L}\right) \odot
    \max\left\{\left|\frac{-w}{L}\right|
      -\frac{\lambda}{L},0\right\}, \label{ch2_eq:shrinkage_solution}\\
    &= \bar{x} +
    \frac{-\mbox{sign}(w)}{L}\odot\max\{|w|-\lambda,0\},
    \label{ch2_eq:simplified_shrinkage_solution}
\end{align}
where \eqref{ch2_eq:shrinkage_solution} follows from the fact that
$\argmin_{z\in\reals^n}\{\nu\norm{z}_1+\frac{1}{2}\norm{z-\zeta}_2^2\} =
\mbox{sign}(\zeta)\odot\max\{|\zeta|-\nu,0\}$,
where $\odot$ is
component-wise multiplication operator~\cite{Yin08_1J}, and all
other vector operators such as $|\cdot|$, $\mbox{sign}(\cdot)$ and
$\max\{\cdot,\cdot\}$ are
defined to operate component-wise. Substituting $y^*$ in
\eqref{ch2_eq:combined_upper_bound}, we get
\begin{align}
     &\min_{y\in\reals^n}\left\{w^T(y-\bar{x})+
       \frac{L}{2}\norm{y-\bar{x}}_2^2+\lambda\norm{y-\bar{x}}_1\right\},
     \nonumber \\
    =&-\sum_i\frac{|w_i|}{L}\max\{|w_i|-\lambda,0\} +
    \frac{1}{2L}\sum_i\max\{|w_i|-\lambda,0\}^2 +
    \frac{\lambda}{L}\sum_i\max\{|w_i|-\lambda,0\},
    \nonumber \\
    =&\frac{1}{L}\sum_{i:|w_i|>\lambda}\left(-|w_i| +
      \frac{1}{2}(|w_i|-\lambda)+\lambda\right)(|w_i|-\lambda),
    \nonumber\\
    =& \mbox{}
    -\frac{1}{2L}\sum_{i:|w_i|>\lambda}(|w_i|-\lambda)^2.
    \label{ch2_eq:shrinkage_optvalue}
\end{align}
The bound~\eqref{ch2_eq:Lipschitz_ineq_L1} follows from the fact
$\lambda\norm{\bar{x}}_1+f(\bar{x})-\left(\lambda\norm{x^*}_1+f(x^*)\right)\leq\epsilon$.
The bound~\eqref{ch2_eq:Lipschitz_ineq_L1} clearly implies that
$|w_i|\leq\sqrt{2L\epsilon}+\lambda$ for all $i$, i.e. $\norm{\grad
  f(\bar{x})}_\infty \leq \sqrt{2L\epsilon}+\lambda$.
\end{proof}

\begin{corollary}
\label{ch2_cor:grad_norm_bound}
Suppose $A \in \reals^{m\times n}$ with $m \leq n$ and full rank.
Let $P(x) = \lambda \norm{x}_1 + \frac{1}{2}
\norm{Ax-b-\lambda \theta}_2^2$.
Suppose $\bar{x}$ is $\epsilon$-optimal for
$\min_{x\in\reals^n}P(x)$, i.e. $0\leq P(\bar{x})- \min_{x \in
  \reals^n}P(x) \leq \epsilon$.
Then
\begin{equation}
  \label{eq:grad_norm_bound}
  \begin{array}{rcl}
    \norm{A^T(A\bar{x}-b-\lambda\theta)}_\infty & \leq & \sqrt{2\epsilon}\
    \sigma_{max}(A)+\lambda, \\
    \norm{A\bar{x}-b-\lambda\theta}_2 & \leq &
    \frac{\sqrt{n}}{\sigma_{min}(A)}\left(\sqrt{2\epsilon}\
      \sigma_{max}(A)+\lambda\right),
  \end{array}
\end{equation}
where $\sigma_{max}(A)$ denotes the maximum singular value of $A$.
\end{corollary}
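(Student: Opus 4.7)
The plan is to view the corollary as a specialization of Theorem~\ref{ch2_thm:grad_convergence} applied to the smooth quadratic $f(x) := \frac{1}{2}\norm{Ax - b - \lambda\theta}_2^2$. So I would begin by checking the hypotheses of Theorem~\ref{ch2_thm:grad_convergence}: $f$ is convex with gradient $\grad f(x) = A^T(Ax-b-\lambda\theta)$, and since $\grad f(x) - \grad f(y) = A^TA(x-y)$ for all $x,y \in \reals^n$, the Lipschitz constant of $\grad f$ is exactly $L = \sigma_{\max}^2(A)$. With this identification, $P(x) = \lambda\norm{x}_1 + f(x)$ has the form required by the theorem.

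Next I would invoke Theorem~\ref{ch2_thm:grad_convergence} directly. Since $\bar x$ is $\epsilon$-optimal for $\min_{x \in \reals^n} P(x)$, the theorem's conclusion $\norm{\grad f(\bar x)}_\infty \leq \sqrt{2L\epsilon} + \lambda$ becomes
\eq
\norm{A^T(A\bar{x}-b-\lambda\theta)}_\infty \leq \sqrt{2\sigma_{\max}^2(A)\,\epsilon} + \lambda = \sigma_{\max}(A)\sqrt{2\epsilon} + \lambda,
\en
which is the first bound in \eqref{eq:grad_norm_bound}.

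For the second bound, let $z := A\bar x - b - \lambda\theta \in \reals^m$. Since $A \in \reals^{m\times n}$ has full row rank with $m \leq n$, the transpose $A^T$ is injective on $\reals^m$, and hence $\norm{A^T z}_2 \geq \sigma_{\min}(A)\norm{z}_2$, where $\sigma_{\min}(A)$ denotes the smallest nonzero singular value of $A$. Combining this with the elementary norm inequality $\norm{A^T z}_2 \leq \sqrt{n}\,\norm{A^T z}_\infty$ and the first bound yields
\eq
\norm{z}_2 \leq \frac{1}{\sigma_{\min}(A)}\,\norm{A^T z}_2 \leq \frac{\sqrt{n}}{\sigma_{\min}(A)}\,\norm{A^T z}_\infty \leq \frac{\sqrt{n}}{\sigma_{\min}(A)}\left(\sigma_{\max}(A)\sqrt{2\epsilon} + \lambda\right),
\en
which is exactly the second bound in \eqref{eq:grad_norm_bound}. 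There is no real obstacle here: the work is essentially a verification of Lipschitz constant followed by a one-line application of the singular-value bound for $A^T$. The only mild subtlety is remembering that $\sigma_{\min}(A)$ must be interpreted as the smallest \emph{nonzero} singular value (which exists and equals $\sigma_m(A) > 0$ by the full row rank assumption), so that the injectivity step is legitimate.
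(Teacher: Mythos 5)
Your proposal is correct and follows essentially the same route as the paper's proof: identify $f(x)=\frac{1}{2}\norm{Ax-b-\lambda\theta}_2^2$ with Lipschitz constant $L=\sigma_{\max}^2(A)$, apply Theorem~\ref{ch2_thm:grad_convergence} for the first bound, and then combine $\norm{A^Tz}_2\geq\sigma_{\min}(A)\norm{z}_2$ with $\norm{A^Tz}_2\leq\sqrt{n}\norm{A^Tz}_\infty$ for the second. Your explicit remark that $\sigma_{\min}(A)$ is the smallest nonzero singular value (justified by full row rank) is a minor clarification the paper leaves implicit.
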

\begin{proof}
Let $f(x)=\frac{1}{2}\norm{Ax-b-\lambda\theta}_2^2$,
then $\grad f(x)=A^T(Ax-b-\lambda\theta)$. For any $x, y \in\reals^n$, we have
\eq
\norm{\grad f(x)-\grad f(y)}_2 = \norm{A^TA(x-y)}_2 \leq \sigma_{max}^2(A) \norm{x-y}_2,
\en
where $\sigma_{max}(A)$ is the maximum singular-value of $A$. Thus,
$f:\Re^n\rightarrow \Re$ is a convex function and $\grad
f$ is Lipschitz continuous with the constant $L=\sigma_{max}^2(A)$.

Since  $\bar{x}$ is an $\epsilon$-optimal solution to
$\min_{x\in\reals^n}P(x) = \min_{x\in\reals^n}\{\lambda
\norm{x}_1+f(x)\}$, Theorem~\ref{ch2_thm:grad_convergence} 
immediately implies the first bound in \eqref{eq:grad_norm_bound}. The
second bound follows from the fact that
\eq
\norm{A\bar{x}-b-\lambda\theta}_2  \leq
\frac{\norm{A^T(A\bar{x}-b-\lambda\theta)}_2}{\sigma_{min}(A)} \leq
\frac{\sqrt{n}}{\sigma_{min}(A)}\norm{A^T(A\bar{x}-b-\lambda\theta)}_\infty,
\en
where the first inequality follows the definition of $\sigma_{\min}(A)$
and the second from the bound $\norm{y}_2 \leq \sqrt{n}\norm{y}_{\infty}$
for all $y \in \reals^n$.
\end{proof}

\begin{lemma}
\label{ch2_lem:boundry_solution}
Let $f:\reals^n\rightarrow \reals$ be a strictly convex function and
$S\subset\reals^n$ be a closed, convex set. Let $x^\ast_S=\argmin_{x\in
  S}f(x)$ and $x^*=\argmin_{x\in\reals^n}f(x)$. Suppose the unconstrained
optimum $x^*\not\in S$, then
$x^\ast_S \in \partial S$, where $\partial S$ denotes the boundary of the
set $S$.
\end{lemma}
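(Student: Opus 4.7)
The plan is to argue by contradiction: I will suppose $x^\ast_S \in \mathrm{int}(S)$ and construct a point in $S$ with strictly smaller $f$-value than $x^\ast_S$, contradicting the definition of $x^\ast_S$ as the constrained minimizer.

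First I would record the ingredients I need. Since $x^\ast \notin S$ and $x^\ast_S \in S$, these two points are distinct. Since $x^\ast$ is the unconstrained minimizer, I have $f(x^\ast) \leq f(x^\ast_S)$. If $x^\ast_S$ were an interior point of $S$, there would exist $\rho > 0$ such that the open ball $B(x^\ast_S,\rho)$ is contained in $S$.

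Next I would build the contradiction using a convex combination along the segment from $x^\ast_S$ toward $x^\ast$. For $\mu \in (0,1)$ set $y_\mu := (1-\mu)\, x^\ast_S + \mu\, x^\ast$. Picking $\mu$ sufficiently small (for instance, $\mu < \rho/\|x^\ast - x^\ast_S\|_2$) guarantees $y_\mu \in B(x^\ast_S,\rho) \subseteq S$, so $y_\mu$ is feasible. By strict convexity of $f$ and the fact that $x^\ast \neq x^\ast_S$,
\begin{equation*}
f(y_\mu) < (1-\mu)\, f(x^\ast_S) + \mu\, f(x^\ast) \leq (1-\mu)\, f(x^\ast_S) + \mu\, f(x^\ast_S) = f(x^\ast_S),
\end{equation*}
where the second inequality uses $f(x^\ast) \leq f(x^\ast_S)$. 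This contradicts the minimality of $f(x^\ast_S)$ over $S$, so $x^\ast_S$ cannot lie in $\mathrm{int}(S)$, i.e.\ $x^\ast_S \in \partial S$.

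There is no real obstacle here; the only subtle point is ensuring that the strict inequality from strict convexity is genuinely available, which requires $x^\ast \neq x^\ast_S$ and $\mu \in (0,1)$, both of which are automatic from $x^\ast \notin S \ni x^\ast_S$ and the choice of $\mu$ above.
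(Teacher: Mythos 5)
Your proof is correct and follows essentially the same route as the paper's: assume $x^\ast_S$ is interior, move slightly along the segment toward $x^\ast$ to stay in $S$, and use strict convexity to produce a feasible point with strictly smaller objective value, contradicting optimality of $x^\ast_S$. The only cosmetic difference is that the paper first notes $f(x^\ast) < f(x^\ast_S)$ (from uniqueness of the unconstrained minimizer) whereas you only need $f(x^\ast) \leq f(x^\ast_S)$ and extract the strict inequality from the strict convexity of the convex combination; both are valid.
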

\begin{proof}
We will establish the result by contradiction. Suppose $x^\ast_S\in
\intr(S)$. Then, there exists an $\epsilon>0$ such that
$B_{\epsilon} = \{x\in\reals^n\;:\;\norm{x-x^\ast_{S}}_2<\epsilon\}\subset
S$. Since $f$ is strictly convex and $x^* \neq x^\ast_S$,
$f(x^*) < f(x^\ast_S)$.

Fix  $0<\lambda<
\frac{\epsilon}{\norm{\bar{x}-x^*}_2}<1$. Then $x_{\lambda} = \lambda x^* +
(1-\lambda) x^\ast_S \in B_{\epsilon} \subset S$. Since $f$ is
strictly convex,
\begin{align}
f(x_{\lambda})< \lambda f(x^*) + (1-\lambda) f(\bar{x}) < f(x_S^\ast).
\end{align}
This contradicts the fact that $x^\ast_S = \argmin_{x\in S}\{f(x)\}$.
Thus, $x^\ast_S \in S \backslash \intr(S) = \partial S$.
\end{proof}

\begin{lemma}
\label{ch2_lem:constrained_shrinkage}
Fix $y \in \reals^n$, $\lambda > 0$ and $\eta > 0$. Let $P(x) =
\lambda\norm{x}_1+\frac{1}{2}\norm{x-y}_2^2$ and
\begin{equation}
  \label{ch2_eq:constrained_shrinkage_problem}
  x^*=\argmin\{P(x):\norm{x}_1\leq\eta\}.
\end{equation}
Then the deterministic
complexity of computing $x^*$ is $\cO(n\log(n))$, and the randomized
complexity is~$\cO(n)$.
\end{lemma}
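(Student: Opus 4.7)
The plan is to reduce the constrained shrinkage problem to a one-dimensional root-finding problem via Lagrangian duality, and then exploit the piecewise-linear structure of the resulting dual function.

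First I would write down the KKT conditions. Since $P(x)=\lambda\|x\|_1+\tfrac{1}{2}\|x-y\|_2^2$ is strictly convex and the constraint set $\{x:\|x\|_1\leq\eta\}$ is convex, there exists a unique optimal $x^*$ and a dual multiplier $\mu^*\geq 0$ such that $x^*\in\argmin_{x\in\reals^n}\{(\lambda+\mu^*)\|x\|_1+\tfrac{1}{2}\|x-y\|_2^2\}$ with complementary slackness $\mu^*(\|x^*\|_1-\eta)=0$. By the standard closed-form for unconstrained shrinkage (recalled in Theorem~\ref{ch2_thm:grad_convergence}), the minimizer for a given $\mu\geq 0$ is
\begin{equation*}
x(\mu)\;=\;\mbox{sign}(y)\odot\max\{|y|-(\lambda+\mu),\,0\}.
\end{equation*}

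Next I would split into two cases. If $\|x(0)\|_1\leq\eta$ then $\mu^*=0$ and $x^*=x(0)$; this is decided by one $O(n)$ evaluation. Otherwise, by Lemma~\ref{ch2_lem:boundry_solution} applied to the strictly convex $P$, $x^*$ must lie on the boundary, so $\mu^*>0$ is the unique solution of $g(\mu):=\|x(\mu)\|_1=\eta$. Observe that $g(\mu)=\sum_{i=1}^n\max\{|y_i|-\lambda-\mu,0\}$ is continuous, non-increasing, convex and piecewise linear, with breakpoints exactly at the (positive) values $\tau_i:=|y_i|-\lambda$. On the interval where the active set equals a fixed $I\subseteq\{1,\dots,n\}$, one has $g(\mu)=\sum_{i\in I}|y_i|-|I|(\lambda+\mu)$, so once the active set is identified, $\mu^*$ is recovered in closed form by solving a single linear equation.

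The algorithmic task therefore reduces to locating the correct active set $I$. For the deterministic $\cO(n\log n)$ bound I would sort $\{|y_i|\}$ in decreasing order in $\cO(n\log n)$ time, then scan the sorted list maintaining the running partial sum $S_k=\sum_{i=1}^k|y_{(i)}|$ and testing, at each breakpoint, whether the candidate $\mu=(S_k-k\lambda-\eta)/k$ lies in the interval $[\tau_{(k+1)},\tau_{(k)}]$ between consecutive breakpoints; the first $k$ for which this holds yields $\mu^*$, and $x^*=x(\mu^*)$ is then assembled in $\cO(n)$. For the randomized $\cO(n)$ bound, I would replace the sort by a quickselect-style partitioning procedure that at each step picks a pivot breakpoint, uses the sign of $g$ at the pivot to discard roughly half the candidates (updating $|I|$ and the relevant partial sum in $\cO(\text{pivot block size})$), and recurses; the standard analysis of randomized selection gives $\cO(n)$ expected time. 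This is the same blueprint used for projection onto the $\ell_1$-ball, which is the special case $\lambda=0$.

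The only step that needs genuine care is verifying that the breakpoint scan correctly identifies the active set — in particular, that $\mu^*$ may coincide with a breakpoint and that negative $\tau_i$ (corresponding to coordinates with $|y_i|\leq\lambda$ that are inactive for every $\mu\geq 0$) must be pruned before the search. Once one restricts attention to the indices with $|y_i|>\lambda$ and treats the monotone piecewise-linear function $g$ on $[0,\max_i\tau_i]$, the location argument is routine and the complexity claims follow immediately.
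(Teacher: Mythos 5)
Your proposal is correct and follows essentially the same route as the paper's proof: Lagrangian duality reducing the problem to the scalar equation $\norm{x^*(\alpha)}_1=\eta$, the case split on whether the unconstrained shrinkage $x^*(0)$ is already feasible (invoking Lemma~\ref{ch2_lem:boundry_solution} otherwise), a sort-and-scan over the breakpoints for the $\cO(n\log n)$ bound, and a randomized selection argument for the $\cO(n)$ bound. Your treatment of the breakpoint scan and the pruning of coordinates with $|y_i|\leq\lambda$ is if anything slightly more explicit than the paper's.
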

\begin{proof}
Since $P(x)$ is strongly convex,
\eqref{ch2_eq:constrained_shrinkage_problem} has a unique primal
optimal solution.
Also, since the optimization problem
\eqref{ch2_eq:constrained_shrinkage_problem}
satisfies Slater's constraint qualification, strong duality
holds, and since the primal optimal
value bounded, the dual optimal value is attained. 

Let
\begin{align}
\mathcal{L}(x,\alpha)& =
\lambda\norm{x}_1+\frac{1}{2}\norm{x-y}_2^2+\alpha(\norm{x}_1-\eta),\\
&= (\lambda+\alpha)\norm{x}_1+\frac{1}{2}\norm{x-y}_2^2-\alpha\eta,
\end{align}
denote the Lagrangian function. Since strong duality holds, $x^*$ is a
minimizer of $\mathcal{L}(x,\alpha^*)$, where $\alpha^\ast$ denote the
optimal dual solution.
Since $\mathcal{L}(x,\alpha^*)$ is a strictly convex function of $x$,
$x^*$ is
the unique minimizer of $\mathcal{L}(x,\alpha^*)$.
Let
\begin{align}
\label{eq:x_alpha}
x^*(\alpha)& = \argmin_{x\in\reals^n}\mathcal{L}(x,\alpha)=
\mbox{sign}(y)\odot\max\{|y|-(\lambda+\alpha), 0\}.
\end{align}
It is clear that $x^\ast = x^*(\alpha^*)$. In the rest of this proof, we
show how to efficiently compute $\alpha^\ast$.

Note that $x^*(0)=\mbox{sign}(y)\odot\max\{|y|-\lambda, 0\}$ is the unique
unconstrained minimizer of $P(x)$. When $\norm{x^*(0)}_1\leq\eta$, then
trivially $x^*=x^*(0)$. However, when $\norm{x^*(0)}_1>\eta$,
Lemma~\ref{ch2_lem:boundry_solution} implies that $x^* \in \partial
\{x\in\reals^n\;:\; \norm{x}_1\leq
\eta\}$, i.e. $\norm{x^*}_1=\eta$. Therefore,
\begin{align}
\alpha^*\in\{\alpha>0:\;\norm{x^*(\alpha)}_1=\eta\}. \label{ch2_eq:optimal_alpha_set}
\end{align}
From (\ref{eq:x_alpha}), it follows that
\begin{align}
\norm{x^*(\alpha)}_1 =
\sum_{i:|y_i|- \lambda\geq\alpha}
((|y_i|-\lambda)-\alpha)=\sum_{i=1}^n(|x^*(0)|-\alpha)^+. \label{ch2_eq:euclidean_projection}
\end{align}
Note that $\norm{x^*(\alpha)}_1$ is a strictly decreasing continuous
function of $\alpha$. Since $\norm{x^*(0)}_1>\eta$, there exists a
unique $\hat{\alpha}>0$ such that
$\norm{x^*(\hat{\alpha})}_1=\eta$. From \eqref{ch2_eq:optimal_alpha_set}, we
can conclude that $\alpha^*=\hat{\alpha}$.

To compute $\hat{\alpha}$ such that $\norm{x^*(\hat{\alpha})}_1=\eta$,
sort $z = |x^*(0)|$ in decreasing order. Let $z_{[i]}$ denote the
$i$-th largest component of $z$. It is clear that
$\norm{x^*(w_{[n]})}_1>\eta>0$ and $\norm{x^*(\alpha)}_1=0$ for all
$\alpha > w_{[1]}$. Hence, there exists an index  $1\leq k < n$ such that
$\norm{x^*(w_{[k]})}_1\leq\eta$ and $\norm{x^*(w_{[k+1]})}_1>\eta$, and it
follows that
\begin{align}
\alpha^* = \frac{1}{k}\left(\sum_{j=1}^{k}w_{[j]}-\eta\right).
\end{align}
Thus, $x^*=x^*(\alpha^*)$ can be computed in $O(n\log(n))$
operations. Singer et al~\cite{Duc08_1C} show that $\alpha^*$ with a
$\cO(n)$ randomized complexity
using a slightly modified version of the
randomized median finding algorithm.
\end{proof}
\end{document}